\newcommand{\lls}{\langle\langle}
\newcommand{\ggs}{\rangle\rangle}
\newtheorem{theorem}{\textbf{Theorem}}
\newtheorem*{theorem*}{\textbf{Theorem}}
\newtheorem{prop}{\textbf{Proposition}}
\newtheorem{assumption}{\textbf{Assumption}}
\newtheorem{cor}{Corollary}
\newtheorem{lemma}{Lemma}
\newtheorem{remark}{Remark}
\title{On the Design of Globally Exponentially Stable Hybrid Attitude and Gyro-bias Observers}
\author{Soulaimane Berkane, Abdelkader Abdessameud and Abdelhamid Tayebi
\thanks{This work was supported by the National Sciences and Engineering Research Council of Canada (NSERC).}
\thanks{The authors are with the Department of Electrical and Computer Engineering, University of Western Ontario, London, Ontario, Canada. A. Tayebi is also with the Department of Electrical Engineering, Lakehead University, Thunder Bay, Ontario, Canada.
     {\tt\small sberkane@uwo.ca, aabdess@uwo.ca, atayebi@lakeheadu.ca} }%
}
\begin{document}

\maketitle
\begin{abstract}
This paper presents hybrid attitude and gyro-bias observers designed directly on the Special Orthogonal group $SO(3)$. The proposed hybrid observers, enjoying global exponential stability, rely on a hysteresis-based switching between different configurations derived from a set of potential functions on $SO(3)$. Different sets of potential functions have been designed via an appropriate angular warping transformation applied to some smooth and non-smooth potential functions on $SO(3)$. We show that the proposed hybrid observers can be expressed solely in terms of inertial vector measurements and biased angular velocity readings. Simulation results are given to illustrate the effectiveness of the proposed attitude estimation approach.
\end{abstract}

\section{Introduction}
            The attitude estimation problem has generated a great deal of research work in the literature. The problem consists in recovering the attitude of a rigid body using available measurements in the body frame.
            The early attitude estimators were of a static type, designed to reconstruct the attitude from a set of vector measurements (see, for instance, \cite{Shuster1981, Markley1988}). These static attitude reconstruction techniques are hampered by their inability in handling measurement noise. To overcome this problem, researchers looked for dynamic estimators (relying on the angular velocity and inertial vector measurements) having the ability to recover the attitude while filtering measurement noise. Among these dynamic estimators, Kalman filters played a central role in aerospace applications (see, for instance, \cite{Markley2003,crassidis2007survey}).

            Recently, a new class of dynamic nonlinear attitude estimators (observers) has emerged \cite{Mahony2008}, and proved its ability in handling large rotational motions and measurement noise. This approach, coined nonlinear complementary filtering, was inspired from the linear attitude complementary filters, \textit{e.g.,} \cite{tayebi2006attitude}, used to recover (locally) the attitude using gyro and inertial vector measurements. The smooth nonlinear complementary filters, such as those proposed in \cite{Mahony2008}, are directly designed on $SO(3)$ and are proved to guarantee \textit{almost} global asymptotic stability (AGAS), which is as strong as the motion space topology could permit \cite{Bhat2000}. These smooth nonlinear observers ensure the convergence of the estimated attitude to the actual one from almost all initial conditions except from a set of critical points (equilibria) of zero Lebesgue measure. It has been noted in \cite{lee2012} that starting from a configuration close to the undesired critical points results in a slow convergence to the actual attitude. A nonlinear attitude estimator, with local stability results and improved convergence properties, has been proposed in \cite{zlotnik2017nonlinear}. In \cite{mayhew2011hybrid,lee2015tracking,berkaneCDC2015synergistic,berkane2015construction}, the topological obstruction to global asymptotic stability on compact manifolds such as $SO(3)$ has been successfully addressed via \textit{synergistic} hybrid techniques. Following this approach, a non-central hybrid attitude observer on $SO(3)$ has been proposed in \cite{lee2015observer} leading to global asymptotic stability.  Attitude estimators (evolving outside $SO(3)$) with global asymptotic and exponential stability properties have also been proposed in \cite{batista2012sensor} and \cite{Batista2012}, respectively. 

            In the present work, we develop a comprehensive approach for the design of central\footnote{The term central here refers to the use of a central family of potential functions on $SO(3)$, where all the potential functions in the family share the desired equilibrium point as a critical point. Note that in contrast with the non-central approach, each individual observer configuration derived from each  potential function in the central family, guarantees (independently) almost global asymptotic stability results.} hybrid attitude and gyro-bias observers on $SO(3)$, using biased angular velocity and inertial vector measurements, leading to global exponential stability results. First, we propose a general structure of a hybrid attitude and gyro-bias observer evolving on $SO(3)$, where the observer input depends on the gradient of some potential function on $SO(3)$ indexed by a hybrid discrete jump. We show that global exponential stability is guaranteed provided that the family of potential functions under consideration satisfies some properties. Thereafter, we propose four different methods in designing the central family of synergistic potential functions on $SO(3)$, via angular warping, enjoying the properties required for global exponential stability achievement. Two proposed estimation schemes rely on attitude information obtained using any reconstruction procedure. The other two proposed hybrid estimation schemes are explicitly expressed in terms of body-frame vector measurements. A preliminary and partial version of this work have been published in \cite{berkaneACC2016observer,berkaneCDC2016observer}. The present paper generalizes the approach and proposes different other possible designs.

\section{Background}
\subsection{Notations and Preliminaries}

        Throughout the paper, we use $\mathbb{R}$, $\mathbb{R}^+$ and $\mathbb{N}$ to denote, respectively, the sets of real, nonnegative real and natural numbers. We denote by $\mathbb{R}^n$ the $n$-dimensional Euclidean space and by $\mathbb{S}^n$ the unit $n$-sphere embedded in $\mathbb{R}^{n+1}$. We use $\|x\|$ to denote the Euclidean norm of a vector $x\in\mathbb{R}^n$. For matrices $A, B\in\mathbb{R}^{m\times n}$, the inner product is defined as $\lls A,B\ggs=\textrm{tr}(A^{\top}B)$, and the Frobenius norm of $A$ is $\|A\|_F=\sqrt{\lls A,A\ggs}$. We use $\lambda_i^A$, $\lambda_{\min}^A$ and $\lambda_{\max}^A$ to denote, respectively, the $i$-th, minimum and maximum eigenvalues of a symmetric matrix $A$.

        Let the map $[\cdot]_\times: \mathbb{R}^3\to\mathfrak{so}(3)$ be defined such that $[x]_\times y=x\times y$, for any $x, y\in\mathbb{R}^3$, where $\times$ is the vector cross-product on $\mathbb{R}^3$ and 
        $$
        \mathfrak{so}(3):=\left\{\Omega\in\mathbb{R}^{3\times 3}\mid\;\Omega^{\top}=-\Omega\right\},
        $$
is the vector space of 3-by-3 skew-symmetric matrices. Let $[\cdot]_\otimes:\mathfrak{so}(3)\to\mathbb{R}^3$ denote the inverse isomorphism of the map $[\cdot]_\times$, such that $[[\omega]_\times]_{\otimes}=\omega,$ for all $\omega\in\mathbb{R}^3$ and $[[\Omega]_{\otimes}]_\times=\Omega,$ for all $\Omega\in\mathfrak{so}(3)$. Defining $\mathbb{P}_a:\mathbb{R}^{3\times 3}\to\mathfrak{so}(3)$ as the projection map on the vector space $\mathfrak{so}(3)$ such that $ \mathbb{P}_a(A):=(A-A^{\top})/2$, one can extend the definition of $[\cdot]_\otimes$ to $\mathbb{R}^{3\times 3}$ by taking the composition map $\psi$ defined for
        any matrix $A=\{a_{ij}\}\in\mathbb{R}^{3\times 3}$ as 
        \begin{equation}\label{psi}
        \psi(A):=\Big[\mathbb{P}_a(A)\Big]_\otimes=\frac{1}{2}\left[\begin{array}{c}
        a_{32}-a_{23}\\a_{13}-a_{31}\\a_{21}-a_{12}
        \end{array}
        \right].
        \end{equation}
		The following identities are useful throughout the paper.
		\begin{align}
		\label{id::1}
		[u]_\times^2&=-\|u\|^2I+uu^\top,&\quad u\in\mathbb{R}^3,\\
		\label{id::2}
		[u\times v]_\times&=vu^\top-uv^\top,&\quad u,v\in\mathbb{R}^3,\\
		\label{id::3}
		\mathrm{tr}(uv^\top)&=u^\top v,&\quad u,v\in\mathbb{R}^3,\\
		\label{id::4}
		\lls A,[u]_\times\ggs&=\lls\mathbb{P}_a(A),[u]_\times\ggs,&\quad A\in\mathbb{R}^{3\times 3}, u\in\mathbb{R}^3,\\    
		\label{id::5}
        \lls [v]_\times,[u]_\times\ggs&=2u^\top v,&\quad v,u\in\mathbb{R}^3,\\  
        \label{id::6} 
        A[u]_\times+[u]_\times A^\top+[A^\top u]_\times&=\mathrm{tr}(A)[u]_\times,&\quad A\in\mathbb{R}^{3\times 3}, u\in\mathbb{R}^3.
        \end{align}

\subsection{Attitude Representation and Useful Relations}
        The rigid body attitude evolves on the special orthogonal group 
        $$
        SO(3) := \{ X \in \mathbb{R}^{3\times 3}|\; \mathrm{det}(X)=1,\; XX^{\top}= I \},
        $$
       where $I$ is the three-dimensional identity matrix and $X\in SO(3)$ is called a \textit{rotation matrix}. The group $SO(3)$ has a compact manifold structure with its \textit{tangent spaces} being identified by $T_XSO(3):=\left\{X\Omega\mid\Omega\in\mathfrak{so}(3)\right\}$. The inner product on $\mathbb{R}^{3\times 3}$, when restricted to the Lie algebra of $SO(3)$, defines the following \textit{left-invariant} Riemannian metric on $SO(3)$
        \begin{equation}\label{metric}
        \langle X\Omega_1,X\Omega_2\rangle_X:=\lls\Omega_1,\Omega_2\ggs,
        \end{equation}
        for all $X\in SO(3)$ and $\Omega_1,\Omega_2\in\mathfrak{so}(3)$.

        A unit quaternion\footnote{The reader is referred to \cite{Shuster1993} for more details on the unit quaternion representation.} $(\eta,\epsilon)\in\mathbb{Q}$, consists of a scalar part $\eta$ and three-dimensional vector $\epsilon$, such that $\mathbb{Q}:=\{(\eta,\epsilon)\in\mathbb{R}^4\;|\; \eta^2+\epsilon^{\top}\epsilon=1\}$.
        A unit quaternion represents a rotation matrix through the map $\mathcal{R}_Q:\mathbb{Q}\to SO(3)$ defined as
        \begin{equation}\label{rod}
        \mathcal{R}_Q(\eta,\epsilon)=I+2[\epsilon]^2_\times+2\eta[\epsilon]_\times.
        \end{equation}
        The set $\mathbb{Q}$ forms a group with the quaternion product, denoted by $\odot$, being the group operation and quaternion inverse defined by $(\eta,\epsilon)^{-1} = \left(\eta, -\epsilon \right)$ as well as the identity-quaternion $\left( 1, 0_{3\times 1} \right)$, where $0_{3\times 1}\in \mathbb{R}^3$ is a column vector of  zeros. Given $(\eta_1,\epsilon_1),~(\eta_2,\epsilon_2)\in \mathbb{Q}$, the quaternion product is defined by $(\eta_1,\epsilon_1)\odot (\eta_2,\epsilon_2) = (\eta_3, \epsilon_3)$ such that
        \begin{equation}\label{Qmultiply}
        \eta_3 = \eta_1 \eta_2- \epsilon_1^{\top}\epsilon_2, \quad \epsilon_3 = \eta_1\epsilon_2 + \eta_2\epsilon_1 +
        [\epsilon_1]_\times\epsilon_2,
        \end{equation}
        and
        \begin{equation}\label{R1R2}
        \mathcal{R}_Q(\eta_1,\epsilon_1)\mathcal{R}_Q(\eta_2,\epsilon_2)=\mathcal{R}_Q(\eta_3,\epsilon_3).
        \end{equation}
The rotation group $SO(3)$ can be also parametrized by rotations of angle $\theta\in\mathbb{R}$ around a unit-vector axis $u\in\mathbb{S}^2$. This is commonly known as the angle-axis parametrization of $SO(3)$ and is given by the map $\mathcal{R}_a:\mathbb{R}\times\mathbb{S}^2\to SO(3)$ such that
        \begin{equation}\label{Rodrigues}
        \mathcal{R}_a(\theta,u)=I+\sin(\theta)[u]_\times+(1-\cos\theta)[u]_\times^2.
        \end{equation}
        The quaternion and angle-axis representations of $SO(3)$ are related through the formulas
        \begin{align}
        \eta=\cos\left(\frac{\theta}{2}\right),\quad\epsilon=\sin\left(\frac{\theta}{2}\right)u.
        \end{align}
        For any attitude matrix $X\in SO(3)$, we define $|X|_I\in[0, 1]$ as the normalized Euclidean distance on $SO(3)$ which is given by
            \begin{equation}\label{id::normX}
            |X|_I^2:=\frac{1}{8}\|I-X\|_F^2=\frac{1}{4}\mathrm{tr}(I-X).
            \end{equation}
            The following results (proved in the Appendix section) will be useful in our subsequent analysis.
\begin{lemma}\label{lemma::derivatives}
Consider the trajectories $\dot X(t)=X(t)[u(t)]_\times$ where $X(t_0)\in SO(3)$ and $u(t)\in \mathbb{R}^3$. Then,
\begin{align}\label{eq::dtr}
\frac{d}{dt}\mathrm{tr}(A(I-X))&=2\psi(AX)^\top u,\\
\label{eq::dpsi}
\frac{d}{dt}\psi(AX)&=E(AX)u.
\end{align}
where $E(AX):=\frac{1}{2}(\mathrm{tr}(AX)-X^\top A)$.
\end{lemma}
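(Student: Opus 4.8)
The plan is to derive both identities by differentiating along the flow $\dot X=X[u]_\times$ and then reducing the result to the algebraic identities \eqref{id::1}--\eqref{id::6}; there is no analytic or topological subtlety here, so the argument is entirely computational. It is convenient to isolate one auxiliary fact first: for any matrix $M\in\mathbb{R}^{3\times 3}$ and any $u\in\mathbb{R}^3$,
\begin{equation*}
\mathrm{tr}(M[u]_\times)=-2\,\psi(M)^\top u .
\end{equation*}
This follows by writing $\mathrm{tr}(M[u]_\times)=\lls M^\top,[u]_\times\ggs$, using \eqref{id::4} to replace $M^\top$ by its skew-symmetric part $\mathbb{P}_a(M^\top)=-\mathbb{P}_a(M)=-[\psi(M)]_\times$ (recall $\mathbb{P}_a(\cdot)=[\psi(\cdot)]_\times$ from \eqref{psi}), and then applying \eqref{id::5}.

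For \eqref{eq::dtr}, since $A$ is constant, $\frac{d}{dt}\mathrm{tr}(A(I-X))=-\mathrm{tr}(A\dot X)=-\mathrm{tr}(AX[u]_\times)$; applying the auxiliary fact with $M=AX$ gives $2\,\psi(AX)^\top u$, which is the claim.

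For \eqref{eq::dpsi}, I would first use that $\psi=[\mathbb{P}_a(\cdot)]_\otimes$ is linear, so $\frac{d}{dt}\psi(AX)=\psi(A\dot X)=\psi(AX[u]_\times)$. Writing $B:=AX$, we have $\psi(B[u]_\times)=\tfrac12\big[B[u]_\times+[u]_\times B^\top\big]_\otimes$ (using $[u]_\times^\top=-[u]_\times$), and identity \eqref{id::6} with $A$ replaced by $B$ turns the bracket into $\mathrm{tr}(B)[u]_\times-[B^\top u]_\times$; taking $[\cdot]_\otimes$ then yields $\tfrac12\big(\mathrm{tr}(B)I-B^\top\big)u$, i.e.\ $\psi(AX[u]_\times)=E(AX)u$ with $E(AX)=\tfrac12(\mathrm{tr}(AX)I-(AX)^\top)$, which reduces to the stated expression when $A$ is symmetric.

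The only point requiring care is bookkeeping with transposes: since $\psi$ and $\mathbb{P}_a$ retain only the skew-symmetric part of their argument, one must track precisely which factor is transposed when invoking \eqref{id::4} and \eqref{id::6}. Beyond that, there is no genuine obstacle.
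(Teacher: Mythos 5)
Your proof is correct and follows essentially the same route as the paper: both identities are obtained by differentiating along the flow and invoking \eqref{id::3}--\eqref{id::6}, with your auxiliary fact $\mathrm{tr}(M[u]_\times)=-2\psi(M)^\top u$ being exactly the chain of steps (via \eqref{id::4} and \eqref{id::5}) that the paper writes out inline, and your use of \eqref{id::6} for \eqref{eq::dpsi} matching the paper's computation (the paper tacitly uses $A=A^\top$ at the same point you flag).
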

\begin{lemma}\label{lemma::identities_Q}
  Let $A=A^\top$ and $\bar A:=\frac{1}{2}(\mathrm{tr}(A)I-A)$ be positive definite. Let $X\in SO(3)$ and $(\eta,\epsilon)\in\mathbb{Q}$ be the quaternion representation of $X$. Then, the following relations hold:
  \begin{align}
  \label{eq::Q::tr}
  \mathrm{tr}(A(I-X))&=4\epsilon^\top\bar A\epsilon,\\
  \label{eq::Q::psi}
  \psi(AX)&=2(\eta I-[\epsilon]_\times)\bar A\epsilon.
  \end{align}
\end{lemma}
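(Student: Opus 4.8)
The plan is to substitute the quaternion parametrization \eqref{rod}, namely $X=\mathcal{R}_Q(\eta,\epsilon)=I+2[\epsilon]_\times^2+2\eta[\epsilon]_\times$, into each side and reduce everything using the algebraic identities \eqref{id::1}--\eqref{id::6}. Observe at the outset that $[\epsilon]_\times^2$ is symmetric while $[\epsilon]_\times$ is skew-symmetric, so $X^\top=I+2[\epsilon]_\times^2-2\eta[\epsilon]_\times$; this splitting into a symmetric and a skew part of $X$ will be the organizing principle for both formulas.

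For \eqref{eq::Q::tr} I would write $I-X=-2[\epsilon]_\times^2-2\eta[\epsilon]_\times$. Since $A=A^\top$, the identity \eqref{id::4} (with $\mathbb{P}_a(A)=0$) kills the cross term, so $\mathrm{tr}(A(I-X))=-2\,\mathrm{tr}(A[\epsilon]_\times^2)$. Expanding $[\epsilon]_\times^2=-\|\epsilon\|^2I+\epsilon\epsilon^\top$ by \eqref{id::1} and using $\mathrm{tr}(A\epsilon\epsilon^\top)=\epsilon^\top A\epsilon$ (from \eqref{id::3}) gives $\mathrm{tr}(A(I-X))=2\|\epsilon\|^2\mathrm{tr}(A)-2\epsilon^\top A\epsilon=2\epsilon^\top(\mathrm{tr}(A)I-A)\epsilon$, and recognizing $\mathrm{tr}(A)I-A=2\bar A$ finishes this part. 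This step is routine.

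For \eqref{eq::Q::psi} I would start from $\psi(AX)=[\mathbb{P}_a(AX)]_\otimes$ with $\mathbb{P}_a(AX)=\tfrac12(AX-(AX)^\top)=\tfrac12(AX-X^\top A)$, so it suffices to show $\tfrac12(AX-X^\top A)=2\bigl[(\eta I-[\epsilon]_\times)\bar A\epsilon\bigr]_\times$. Substituting $X$ and $X^\top$, one gets $AX-X^\top A=2\bigl(A[\epsilon]_\times^2-[\epsilon]_\times^2A\bigr)+2\eta\bigl(A[\epsilon]_\times+[\epsilon]_\times A\bigr)$. The $\eta$-term is immediate from \eqref{id::6} with $A^\top=A$: it equals $[(\mathrm{tr}(A)I-A)\epsilon]_\times=[2\bar A\epsilon]_\times$. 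The quadratic term is where the real work lies: writing $A[\epsilon]_\times^2=(A[\epsilon]_\times)[\epsilon]_\times$ and applying \eqref{id::6} twice to commute $A$ past each factor of $[\epsilon]_\times$ reduces $A[\epsilon]_\times^2-[\epsilon]_\times^2A$ to the commutator $[2\bar A\epsilon]_\times[\epsilon]_\times-[\epsilon]_\times[2\bar A\epsilon]_\times$; the cross-product commutator identity $[a]_\times[b]_\times-[b]_\times[a]_\times=[a\times b]_\times$ (a direct consequence of \eqref{id::1}--\eqref{id::2}) then turns this into $\bigl[(2\bar A\epsilon)\times\epsilon\bigr]_\times=-2\bigl[[\epsilon]_\times\bar A\epsilon\bigr]_\times$. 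Assembling the pieces yields $\mathbb{P}_a(AX)=-2\bigl[[\epsilon]_\times\bar A\epsilon\bigr]_\times+2\eta[\bar A\epsilon]_\times=2\bigl[(\eta I-[\epsilon]_\times)\bar A\epsilon\bigr]_\times$, and applying $[\cdot]_\otimes$ gives \eqref{eq::Q::psi}. The main obstacle is purely the sign- and order-bookkeeping in this quadratic term; the positive definiteness of $\bar A$ is not needed for the identities themselves and is only a standing hypothesis carried for later use.
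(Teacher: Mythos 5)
Your proof is correct and follows essentially the same route as the paper: substitute the Rodrigues formula \eqref{rod}, use the symmetry of $A$ to kill the skew term in the trace via \eqref{id::1}--\eqref{id::4}, and split $\mathbb{P}_a(AX)$ into an $\eta$-term handled by \eqref{id::6} plus a quadratic term. The only divergence is in that quadratic term, where the paper converts $A[\epsilon]_\times^2-[\epsilon]_\times^2A$ directly into $A\epsilon\epsilon^\top-\epsilon\epsilon^\top A=[\epsilon\times A\epsilon]_\times$ via \eqref{id::1} and \eqref{id::2}, which is slightly shorter than your double application of \eqref{id::6} together with the commutator identity, but the two computations are equivalent and yield the same expression $2\bigl[(\eta I-[\epsilon]_\times)\bar A\epsilon\bigr]_\times$.
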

\begin{lemma}\label{lemma::identities_SO(3)}
  Let $A=A^\top$ and $\bar A:=\frac{1}{2}(\mathrm{tr}(A)I-A)$ be positive definite. Then the following relations hold:
    \begin{align}\label{ineq::tr}
			4\lambda_{\min}^{\bar A}|X|_I^2&\leq\mathrm{tr}(A(I-X))\leq4\lambda_{\max}^{\bar A}|X|_I^2,\\ \label{eq::psi_norm}
			\|\psi(AX)\|^2&=\alpha(A,X)\mathrm{tr}(\underline{A}(I-X)),	\\
    		\label{ineq::alpha}
         (1-|X|_I^2)&\leq\alpha(A,X)\leq (1-\xi^2|X|_I^2),\\
         \label{ineq::E_1}
         v^\top[\lambda_{\min}^{\bar A}-E(AX)]v&\leq \frac{1}{2}\mathrm{tr}(A(I-X))\|v\|^2,\;\forall v\in\mathbb{R}^3,\\
          \|E(AX)\|_F&\leq\|\bar A\|_F,\label{ineq::E_2}
           \end{align}
            where $\alpha(A,X)=(1-|X|_I^2\cos^2(u,\bar Au))$, $\underline{A}=\mathrm{tr}(\bar A^2)I-2\bar A^2$, $\xi=\lambda_{\min}^{\bar A}/\lambda_{\max}^{\bar A}$ and $u\in\mathbb{S}^2$ is the axis of rotation $X$.
   \end{lemma}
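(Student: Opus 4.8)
The plan is to prove each of the five relations in Lemma~\ref{lemma::identities_SO(3)} by reducing to the quaternion identities of Lemma~\ref{lemma::identities_Q} together with the elementary matrix identities \eqref{id::1}--\eqref{id::6}. Throughout, write $(\eta,\epsilon)$ for a quaternion representation of $X$, use $\eta^2+\|\epsilon\|^2=1$, and recall from \eqref{id::normX} and $\mathcal{R}_Q$ that $|X|_I^2=\tfrac14\mathrm{tr}(I-X)=\|\epsilon\|^2$ (since $\mathrm{tr}(I-X)=-2\,\mathrm{tr}([\epsilon]_\times^2)=4\|\epsilon\|^2$). Also set $s:=\|\epsilon\|=\sin(\theta/2)$, so $\epsilon=su$ with $u\in\mathbb{S}^2$ the rotation axis, and $|X|_I^2=s^2$.

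For \eqref{ineq::tr}: by \eqref{eq::Q::tr}, $\mathrm{tr}(A(I-X))=4\epsilon^\top\bar A\epsilon$, and since $\bar A=\bar A^\top>0$, the Rayleigh quotient bounds give $4\lambda_{\min}^{\bar A}\|\epsilon\|^2\le 4\epsilon^\top\bar A\epsilon\le 4\lambda_{\max}^{\bar A}\|\epsilon\|^2$; substituting $\|\epsilon\|^2=|X|_I^2$ finishes it. For \eqref{eq::psi_norm}--\eqref{ineq::alpha}: from \eqref{eq::Q::psi}, $\psi(AX)=2(\eta I-[\epsilon]_\times)\bar A\epsilon$, so $\|\psi(AX)\|^2=4(\bar A\epsilon)^\top(\eta I-[\epsilon]_\times)^\top(\eta I-[\epsilon]_\times)(\bar A\epsilon)=4(\bar A\epsilon)^\top(\eta^2 I-[\epsilon]_\times^2)(\bar A\epsilon)$ (cross terms vanish by skew-symmetry), and using \eqref{id::1}, $\eta^2 I-[\epsilon]_\times^2=\eta^2 I+\|\epsilon\|^2 I-\epsilon\epsilon^\top=I-\epsilon\epsilon^\top$; hence $\|\psi(AX)\|^2=4\big(\|\bar A\epsilon\|^2-(\epsilon^\top\bar A\epsilon)^2\big)$. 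Writing $\epsilon=su$ this is $4s^2\big(\|\bar Au\|^2-s^2(u^\top\bar Au)^2\big)=4s^2\|\bar Au\|^2\big(1-s^2\cos^2(u,\bar Au)\big)$. It remains to recognize $4s^2\|\bar Au\|^2=\mathrm{tr}(\underline A(I-X))$ with $\underline A=\mathrm{tr}(\bar A^2)I-2\bar A^2$: indeed $\underline A$ is symmetric with $\overline{\underline A}:=\tfrac12(\mathrm{tr}(\underline A)I-\underline A)=\bar A^2$ (using $\mathrm{tr}(\underline A)=3\,\mathrm{tr}(\bar A^2)-2\,\mathrm{tr}(\bar A^2)=\mathrm{tr}(\bar A^2)$, so $\tfrac12(\mathrm{tr}(\bar A^2)I-\mathrm{tr}(\bar A^2)I+2\bar A^2)=\bar A^2$), so by \eqref{eq::Q::tr} applied to $\underline A$, $\mathrm{tr}(\underline A(I-X))=4\epsilon^\top\bar A^2\epsilon=4\|\bar A\epsilon\|^2=4s^2\|\bar Au\|^2$. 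This gives $\alpha(A,X)=1-s^2\cos^2(u,\bar Au)=1-|X|_I^2\cos^2(u,\bar Au)$ as claimed, and the bounds \eqref{ineq::alpha} follow since $\cos^2(u,\bar Au)\in[0,1]$ in general, while the sharper lower bound on $\cos^2$ uses $\|\bar Au\|\ge\lambda_{\min}^{\bar A}$ and $u^\top\bar Au\le\lambda_{\max}^{\bar A}$ so $\cos^2(u,\bar Au)=(u^\top\bar Au)^2/\|\bar Au\|^2\ge (\lambda_{\min}^{\bar A})^2/(\lambda_{\max}^{\bar A})^2\cdot(u^\top\bar Au)^2/(\lambda_{\min}^{\bar A})^2$ — I would instead argue directly that $\alpha=1-s^2+s^2\sin^2(u,\bar Au)\cdot(\text{something})$; cleaner: $\|\bar Au\|^2-s^2(u^\top\bar Au)^2\le\|\bar Au\|^2-s^2\|\bar Au\|^2\xi^2$? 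The clean route is $s^2\cos^2(u,\bar Au)\le s^2$ gives $\alpha\ge 1-s^2=1-|X|_I^2$, and $\cos^2(u,\bar Au)\ge\xi^2$ (a standard bound: for a positive definite symmetric $M$, $(u^\top Mu)^2/\|Mu\|^2\ge \lambda_{\min}/\lambda_{\max}$) gives $\alpha\le 1-\xi^2 s^2$.

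For \eqref{ineq::E_1}: recall $E(AX)=\tfrac12(\mathrm{tr}(AX)I-X^\top A)$ from Lemma~\ref{lemma::derivatives}. The symmetric part of $E(AX)$ is $\tfrac12\mathrm{tr}(AX)I-\tfrac12\mathbb{P}_s(X^\top A)$ where $\mathbb{P}_s$ denotes the symmetric projection, and for the quadratic form $v^\top E(AX)v=v^\top E_s(AX)v$ only this symmetric part matters. I would compute $E_s(AX)$ in terms of $\bar A$ and $X$: using $\mathrm{tr}(AX)=\mathrm{tr}(A)-\mathrm{tr}(A(I-X))=\mathrm{tr}(A)-4\epsilon^\top\bar A\epsilon$ and expanding $X^\top A$ via $\mathcal{R}_Q$, one shows $\lambda_{\min}^{\bar A}I-E(AX)$ has symmetric part bounded above by $\tfrac12\mathrm{tr}(A(I-X))I$; the identity \eqref{id::6} applied with suitable $A,u$ is the tool to collapse the $[\epsilon]_\times$ terms. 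Concretely I expect $E_s(AX)=\lambda$-independent expression $\ge \lambda_{\min}^{\bar A}I-\tfrac12\mathrm{tr}(A(I-X))I$, which rearranges to \eqref{ineq::E_1}. For \eqref{ineq::E_2}: $\|E(AX)\|_F=\tfrac12\|\mathrm{tr}(AX)I-X^\top A\|_F$; using $X^\top A=X^\top(\mathrm{tr}(A)I-2\bar A)$-type rewriting and \eqref{id::6}, one shows $\mathrm{tr}(AX)I-X^\top A=-2X^\top\bar A+(\text{skew or traceless correction})$, and since $X$ is orthogonal $\|X^\top\bar A\|_F=\|\bar A\|_F$; care is needed to show the remaining terms do not increase the norm, e.g. by writing the bracket as $X^\top$ times something with Frobenius norm $\le\|\bar A\|_F$.

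The main obstacle will be \eqref{ineq::E_1} and \eqref{ineq::E_2}: unlike the first three relations, these do not follow by a one-line substitution into Lemma~\ref{lemma::identities_Q}, and require carefully identifying the symmetric part of $E(AX)$ and bounding it, which means expanding $X^\top A$ through the Rodrigues/quaternion formula and repeatedly invoking \eqref{id::1}--\eqref{id::6} (especially \eqref{id::6}) to reduce everything to expressions in $\bar A$, $\epsilon$, and $\eta$. The algebra there is where sign errors and hidden positive-semidefinite terms can creep in, so I would set up $E_s(AX)$ explicitly first and verify the bound at the two extreme configurations $X=I$ and $X=\mathcal{R}_a(\pi,u)$ as a sanity check before committing to the general estimate.
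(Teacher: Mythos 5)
Your treatment of \eqref{ineq::tr}, \eqref{eq::psi_norm} and \eqref{ineq::alpha} is correct and essentially identical to the paper's: reduce to Lemma~\ref{lemma::identities_Q}, use $|X|_I^2=\|\epsilon\|^2$ and the Rayleigh bound for \eqref{ineq::tr}, collapse $(\eta I+[\epsilon]_\times)(\eta I-[\epsilon]_\times)=I-\epsilon\epsilon^\top$ via \eqref{id::1}, and identify $\mathrm{tr}(\underline A(I-X))=4\epsilon^\top\bar A^2\epsilon$ exactly as the paper does; the bounds on $\alpha$ come from $\xi\leq\cos(u,\bar Au)\leq 1$, which is also the paper's argument (your parenthetical ``$(u^\top Mu)^2/\|Mu\|^2\geq\lambda_{\min}/\lambda_{\max}$'' should read the square of that ratio, but the inequality $\cos^2(u,\bar Au)\geq\xi^2$ that you actually use is correct).

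The genuine gap is \eqref{ineq::E_1} and \eqref{ineq::E_2}: for these you only state a plan (``I would compute $E_s(AX)$\dots'', ``I expect\dots'', ``care is needed\dots''), not a proof, and the route you anticipate is both heavier and shakier than what is needed. No quaternion/Rodrigues expansion of $X^\top A$ is required. For \eqref{ineq::E_1}, simply substitute $E(AX)=\tfrac12(\mathrm{tr}(AX)I-X^\top A)$ to get $v^\top[\lambda_{\min}^{\bar A}I-E(AX)]v=(\lambda_{\min}^{\bar A}-\tfrac12\mathrm{tr}(AX))\|v\|^2+\tfrac12 v^\top X^\top Av$, bound $v^\top X^\top Av\leq\lambda_{\max}^{A}\|v\|^2$, and use $\lambda_{\min}^{\bar A}=\tfrac12(\mathrm{tr}(A)-\lambda_{\max}^{A})$ to recognize $\tfrac12\mathrm{tr}(A(I-X))\|v\|^2$; no identification of the symmetric part beyond this is needed. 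For \eqref{ineq::E_2}, an exact computation gives $\|E(AX)\|_F^2=\tfrac14\big(\mathrm{tr}(A^2)+\mathrm{tr}^2(AX)\big)$ (the cross terms contribute $-2\,\mathrm{tr}^2(AX)$ against $3\,\mathrm{tr}^2(AX)$ from the identity term), and then $\mathrm{tr}(AX)\leq\mathrm{tr}(A)$ together with $\|\bar A\|_F^2=\tfrac14\big(\mathrm{tr}^2(A)+\mathrm{tr}(A^2)\big)$ finishes it. By contrast, your proposed decomposition $\mathrm{tr}(AX)I-X^\top A=-2X^\top\bar A+(\text{correction})$ does not lead anywhere directly: the correction is $\mathrm{tr}(AX)I-\mathrm{tr}(A)X^\top$, which is neither skew nor traceless, and the triangle inequality only gives an upper bound of the form $\|2X^\top\bar A\|_F+\|\text{correction}\|_F$, which overshoots $2\|\bar A\|_F$; so ``showing the remaining terms do not increase the norm'' is precisely the step that fails as stated. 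Until these two estimates are carried out (e.g., along the direct lines above), two of the five claims of the lemma remain unproved.
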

\begin{lemma}\label{lemma::cross_prod}
            Let $A=\sum_{i=1}^{n}\rho_iv_iv_i^\top$ with $n \geq 1$, $\rho_i>0$ and $v_i\in\mathbb{R}^3, \;i=1, \ldots, n$. Then, the following hold:
            \begin{align}\label{VA_bi}
            \mathrm{tr}(A(I-XY^\top))&=\frac{1}{2}\sum_{i=1}^{n}\rho_i\|X^\top v_i-Y^\top v_i\|^2,\\\label{PA_bi}
           \psi(AXY^\top)&=\frac{1}{2}P\sum_{i=1}^{n}\rho_i (X^\top v_i\times Y^\top v_i),
            \end{align}
            for any matrices $X,Y\in SO(3)$.
\end{lemma}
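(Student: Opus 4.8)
The plan is to prove both identities by expanding $A=\sum_{i=1}^n\rho_iv_iv_i^\top$ and treating each summand separately: since both sides of \eqref{VA_bi} and \eqref{PA_bi} are linear in $A$, it is enough to handle a single weighted dyad $\rho_iv_iv_i^\top$ and then sum over $i$. The computation relies only on the algebraic identities \eqref{id::2}--\eqref{id::3}, on the orthogonality relations $XX^\top=YY^\top=I$, and on the fact that rotations preserve the cross product.

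For \eqref{VA_bi}, I would write $\mathrm{tr}(A(I-XY^\top))=\sum_i\rho_i\,\mathrm{tr}\big(v_iv_i^\top(I-XY^\top)\big)$ and use cyclicity of the trace (equivalently \eqref{id::3}) to obtain $\mathrm{tr}\big(v_iv_i^\top(I-XY^\top)\big)=v_i^\top(I-XY^\top)v_i=\|v_i\|^2-(X^\top v_i)^\top(Y^\top v_i)$. On the other hand, expanding the squared norm and using $\|X^\top v_i\|=\|Y^\top v_i\|=\|v_i\|$ gives $\|X^\top v_i-Y^\top v_i\|^2=2\big(\|v_i\|^2-(X^\top v_i)^\top(Y^\top v_i)\big)$. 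Hence $\mathrm{tr}\big(v_iv_i^\top(I-XY^\top)\big)=\tfrac12\|X^\top v_i-Y^\top v_i\|^2$, and summing over $i$ with the weights $\rho_i$ gives \eqref{VA_bi}.

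For \eqref{PA_bi}, recall from \eqref{psi} that $\psi(M)=\tfrac12[M-M^\top]_\otimes$. Since $A=A^\top$, we have $(AXY^\top)^\top=YX^\top A$; expanding the dyads, $AXY^\top=\sum_i\rho_iv_i(YX^\top v_i)^\top$ and $YX^\top A=\sum_i\rho_i(YX^\top v_i)v_i^\top$, so $AXY^\top-(AXY^\top)^\top=\sum_i\rho_i\big(v_i(YX^\top v_i)^\top-(YX^\top v_i)v_i^\top\big)$. Matching each bracket against the right-hand side $vu^\top-uv^\top$ of \eqref{id::2} (with its $v$ equal to $v_i$ and its $u$ equal to $YX^\top v_i$) shows that the bracket equals $[(YX^\top v_i)\times v_i]_\times$, and therefore $\psi(AXY^\top)=\tfrac12\sum_i\rho_i\,(YX^\top v_i)\times v_i$. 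Finally, using that every rotation $R\in SO(3)$ satisfies $R(a\times b)=(Ra)\times(Rb)$, with $R=Y$, $a=X^\top v_i$, $b=Y^\top v_i$, together with $YY^\top v_i=v_i$, I get $(YX^\top v_i)\times v_i=Y\big(X^\top v_i\times Y^\top v_i\big)$, so that $\psi(AXY^\top)=\tfrac12\,Y\sum_i\rho_i\,(X^\top v_i\times Y^\top v_i)$, which is \eqref{PA_bi} with $P=Y$.

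Both parts are essentially bookkeeping and I do not expect a genuine obstacle. The only place that needs care is \eqref{PA_bi}: keeping the transposes straight when matching $v_i(YX^\top v_i)^\top-(YX^\top v_i)v_i^\top$ against the pattern $vu^\top-uv^\top$ of \eqref{id::2}, and, in the last step, correctly identifying which of the two rotations factors out of the cross product — it is $Y$, not $X$, precisely because $v_i=YY^\top v_i$.
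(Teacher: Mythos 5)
Your proof is correct and takes essentially the same route as the paper's: expand the dyads, use cyclicity of the trace and orthogonality of $X,Y$ for \eqref{VA_bi}, and combine identity \eqref{id::2} with the $SO(3)$-equivariance of the cross product for \eqref{PA_bi} (the paper conjugates by $Y$ before applying \eqref{id::2}, you apply it first and factor $Y$ out afterwards, which is the same computation in a different order). Your identification of the factor in \eqref{PA_bi} as $P=Y$ agrees with what the paper's proof actually establishes.
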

\subsection{Hybrid Systems Framework}\label{appendix::hybrid}
        Let $\mathcal{M}$ be a given manifold. A general model of a hybrid system takes the form:
 \begin{eqnarray}\label{Hybrid:general}
      \left\{\begin{array}{lcl}
            \dot x\in F(x),&\quad&x\in C,\\
            x^+\in G(x), &\quad&x\in D,
            \end{array}\right.
\end{eqnarray}
       where the \textit{flow map}, $F: \mathcal{M}\to\mathsf{T}\mathcal{M}$ governs the continuous flow of $x$ on the manifold $\mathcal{M}$, the \textit{flow set} $C\subset\mathcal{M}$ dictates where the continuous flow could occur. The \textit{jump map}, $G: \mathcal{M}\to\mathcal{M}$, governs discrete jumps of the state $q$, and the \textit{jump set} $D\subset\mathcal{M}$ defines where the discrete jumps are permitted. In this paper, we consider hybrid systems written in the following form
            \begin{equation}\label{Hybrid:general2}
            \begin{array}{l}
          ~~~ \dot z=f(z,q),\\
          \left\{ \begin{array}{ll}
          \dot q=0,&\quad (z,q)\in C,\\
            q^+\in g(z,q),&\quad (z,q)\in D,
           \end{array}\right.
            \end{array}
            \end{equation}
            which is short-hand notation for a system of the form \eqref{Hybrid:general} with $x=(z,q),$ $F(x)=[f(z,q),0]$ and $G(x)=[z,g(z,q)]$.

            A subset $E\subset\mathbb{R}_{\geq 0}\times\mathbb{N}$ is a  \textit{hybrid time domain}, if it is a union of finitely or infinitely many intervals of the form $[t_j ,t_{j+1}]\times\{j\}$ where $0=t_0\leq t_1\leq t_2\leq...$,  with the last interval being possibly of the form $[t_j ,t_{j+1}]\times\{j\}$ or $[t_j ,\infty)\times\{j\}$. The ordering of points on each hybrid time domain is such that $(t,j)\preceq(t^\prime,j^\prime)$ if $t\leq t^\prime$ and $j\leq j^\prime$. A \textit{hybrid arc} is a function $\mathfrak{h}: \textrm{dom}\;\mathfrak{h}\to\mathcal{M}$, where $\textrm{dom}\;\mathfrak{h}$ is a hybrid time domain and, for each fixed $j$, $t\mapsto\mathfrak{h}(t,j)$ is a locally absolutely continuous function on the interval $I_j=\{t: (t,j)\in\textrm{dom}\;\mathfrak{h}\}$. For more details on the dynamical hybrid systems framework, the reader is referred to  \cite{Goebel2006, Goebel2009} and references therein. 
            
%
\subsection{Potential Functions on $SO(3)\times\mathcal{Q}$}
 Given a finite index set $\mathcal{Q}\subset\mathbb{N}$, let $\mathcal{C}^0\left(SO(3)\times\mathcal{Q},\mathbb{R}^+\right)$ denote the set of positive-valued functions $\Phi: SO(3)\times\mathcal{Q}\to\mathbb{R}^+$ such that for each $q\in\mathcal{Q}$, the map $R\mapsto\Phi(R,q)$ is continuous.
        If, for each $q\in\mathcal{Q}$, the map $R\mapsto\Phi(R,q)$ is differentiable on the set $D_q\subseteq SO(3)$ then the function $\Phi(R,q)$ is continuously differentiable on $\mathcal{D}\subseteq SO(3)\times\mathcal{Q}$, where $\mathcal{D}=\cup_{q\in\mathcal{Q}}D_q\times\{q\}$, in which case we denote $\Phi\in\mathcal{C}^1\left(\mathcal{D},\mathbb{R}^+\right)$.        %
        Additionally, for all $(R,q)\in\mathcal{D}$, let $\nabla\Phi(R,q)\in T_RSO(3)$ denote the gradient of $\Phi$, with respect to $R$, relative to the Riemannian metric \eqref{metric}.

        A function $\Phi\in\mathcal{C}^0\left(SO(3)\times\mathcal{Q},\mathbb{R}^+\right)$ is said to be  a \textit{potential function}
            on $\mathcal{D}\subseteq SO(3)\times\mathcal{Q}$
            with respect to the set $\mathcal{A}\subseteq\mathcal{D}$ if:
            \begin{itemize}
            \item $\Phi(R,q)>0$ for all $(R,q)\notin\mathcal{A}$,
            \item $\Phi(R,q)=0$, for all $(R,q)\in\mathcal{A}$,
            \item $\Phi\in\mathcal{C}^1(\mathcal{D},\mathbb{R}^+)$.
                    \end{itemize}
             The set of all potential functions on $\mathcal{D}$ with respect to $\mathcal{A}=\{I\}\times\mathcal{Q}$ is denoted as $\mathcal{P}_\mathcal{D}$, where a function $\Phi(R,q)\in\mathcal{P}_\mathcal{D}$ can be seen as a family of potential functions on $SO(3)$ indexed by the variable $q$. 
  \section{Problem Statement}\label{section:pb:formulation}
        Let $R\in SO(3)$ denote a rotation matrix from the body-fixed frame $\mathcal{B}$ to the inertial frame $\mathcal{I}$. The rotation matrix $R$ evolves according to the kinematics equation
        \begin{equation}\label{kinematic}
        \dot{R}=R[\omega]_\times,
        \end{equation}
        where $\omega\in\mathbb{R}^3$ is the angular velocity of the body-fixed frame $\mathcal{B}$ with respect to the inertial frame $\mathcal{I}$ expressed in the body-fixed frame $\mathcal{B}$. We suppose that a set of $n\geq 2$ vectors, denoted by $b_i$, can be measured in the body-fixed frame and are associated to a set of $n$ known inertial vectors, denoted by $a_i$, such that
            \begin{equation}\label{b_measured}
            b_i=R^{\top}a_i.
            \end{equation}
  \begin{assumption}
$n\geq 3$ body-frame vectors $b_i$ are available for measurement, and at least three of these vectors are non-collinear.
            \end{assumption}
            The vectors $b_i$ can be obtained, for example, from an inertial measurement unit (IMU) that typically includes an accelerometer and a magnetometer measuring, respectively, the gravitational field and Earth's magnetic field expressed in the body-fixed frame. Also, we suppose that the measured angular velocity, denoted by $\omega_y$, can be subject to a constant or slowly varying bias $b_\omega\in\mathbb{R}^3$ such that
            \begin{equation}\label{omega:measured}
            \omega_y=\omega+b_\omega.
            \end{equation}
            Our objective consists in designing an attitude and gyro-bias estimation algorithm, using the above described available measurements, leading to global exponential stability results.
\section{Main results}\label{section::main}
             First, we propose a general design of a hybrid attitude and gyro-bias observer depending on an indexed potential function on $SO(3)\times \mathcal{Q}$ such that $\mathcal{Q}$ is a finite index set. In particular, we show that a suitable choice of the potential function (satisfying some conditions) leads to global exponential stability. Next, we propose different methods for the design of such potential functions satisfying our derived conditions. Then, depending on the choice of the potential function, and the assumptions on the available measurements described in Section~\ref{section:pb:formulation}, we propose four different hybrid observers achieving our objective.

\subsection{Hybrid Attitude and Gyro-bias Observer Design}\label{section::hybrid}
            Let $\mathcal{Q}\subset\mathbb{N}$ be a finite index set and let $\hat{R}$ and $\hat{b}_\omega$ denote, respectively,     the estimate of the rigid body rotation matrix $R$ and the estimate of the constant bias vector $b_\omega$. Define the attitude estimation error $\tilde{R} = R\hat R^\top$ and the bias estimation error $\tilde b_\omega=b_\omega-\hat b_\omega$. We propose the following attitude and gyro-bias estimation scheme
\begin{align}\label{observer_hybrid}
            \dot{\hat R}&=\hat R\left[\omega^y-\hat b_\omega+\gamma_P\beta\big(\Phi(\tilde R,q)\big)\right]_\times,\\
            \label{bias_hybrid}
            \dot{\hat b}_\omega&=-\gamma_I\beta\big(\Phi(\tilde R,q)\big),\\
            \label{beta_hybrid}
            \beta\big(\Phi(\tilde R,q)\big)&=\hat{R}^\top\left[\tilde R^\top\nabla\Phi(\tilde R,q)\right]_\otimes,
\end{align}
            where $\hat R(t_0)\in SO(3)$, $\hat b_\omega(t_0)\in\mathbb{R}^3$, $\gamma_P$ and $\gamma_I$ are strictly positive scalars, and $\Phi\in\mathcal{P}_{\mathcal{D}}$ for some $\mathcal{D}\subseteq SO(3)\times\mathcal{Q}$. The discrete jump variable $q$ is generated by the following hybrid mechanism
                \begin{equation}\label{q}
                \left\{
                \begin{array}{rlll}
                \dot q&=&0,\;\;\;&(\tilde R,q)\in\mathcal{F},\\
                q^+&\in&\mathrm{arg}\underset{p\in{\mathcal{Q}}}{\mathrm{min}}\;\Phi(\tilde R,p),\;\;\;&(\tilde R,q)\in\mathcal{J},
                \end{array}\right.
                \end{equation}
                where the flow set $\mathcal{F}$ and jump set $\mathcal{J}$ are defined by
                \begin{eqnarray}
                \label{F}\mathcal{F}&=&\{(\tilde R,q): \Phi(\tilde R,q)-\underset{p\in{\mathcal{Q}}}{\mathrm{min}}\;\Phi(\tilde R,p)\leq\delta\},\\
                \label{J}\mathcal{J}&=&\{(\tilde R,q): \Phi(\tilde R,q)-\underset{p\in{\mathcal{Q}}}{\mathrm{min}}\;\Phi(\tilde R,p)\geq\delta\},
                \end{eqnarray}
                for some $\delta>0$. A necessary condition to implement this hybrid estimation scheme is that $\mathcal{F}\subseteq\mathcal{D}$.

\begin{theorem}\label{theorem1}
            Consider the attitude kinematics \eqref{kinematic} coupled with the observer \eqref{observer_hybrid}-\eqref{J}. Assume that the potential function $\Phi\in\mathcal{P}_{\mathcal{D}}$, for some $\mathcal{D}\subseteq SO(3)\times\mathcal{Q}$, and  the hysteresis gap $\delta>0$ are chosen such that $\mathcal{F}\subseteq\mathcal{D}$ and
\begin{align}
            \label{ineq::Phi}
            \alpha_1|\tilde R|_I^2&\leq\Phi(\tilde R,q)\leq\alpha_2|\tilde R|_I^2,\quad\forall~(\tilde R,q)\in SO(3)\times\mathcal{Q},\\\label{ineq::DPhi}
            \alpha_3|\tilde R|_I^2&\leq\|\nabla\Phi(\tilde R,q)\|_F^2\leq\alpha_4|\tilde R|_I^2,\quad\forall~(\tilde R,q)\in\mathcal{F},
            \end{align}
            where $\alpha_i>0,i=1,\ldots, 4$ are strictly positive scalars.
            Assume, in addition, that the angular velocity $\omega(t)$ is uniformly bounded. Then, the number of discrete jumps is finite and the equilibrium point $e = 0$, with $e:=(|\tilde R|_I,\|\tilde b_\omega\|)^\top$, is uniformly globally exponentially stable.
\end{theorem}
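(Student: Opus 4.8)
The plan is to use two Lyapunov functions — a weak one that controls the jump logic and gives boundedness, and a strict one that delivers the exponential rate. \emph{Error dynamics.} I would first rewrite the closed loop in the coordinates $(\tilde R,\tilde b_\omega,q)$. Differentiating $\tilde R=R\hat R^{\top}$ and using $\omega^{y}-\hat b_\omega=\omega+\tilde b_\omega$, the identity $\hat R[x]_\times\hat R^{\top}=[\hat Rx]_\times$, and $\hat R\,\beta(\Phi(\tilde R,q))=[\tilde R^{\top}\nabla\Phi(\tilde R,q)]_\otimes=:\nu$, one gets, along flows,
\[
\dot{\tilde R}=\tilde R[\omega_{\tilde R}]_\times,\quad \omega_{\tilde R}:=-\hat R\tilde b_\omega-\gamma_P\nu,\qquad \dot{\tilde b}_\omega=\gamma_I\hat R^{\top}\nu,
\]
while at jumps $\tilde R,\tilde b_\omega$ are unchanged and $q^{+}\in\arg\min_{p\in\mathcal Q}\Phi(\tilde R,p)$. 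By \eqref{id::5}, $\|\nu\|^{2}=\tfrac12\|\nabla\Phi(\tilde R,q)\|_F^{2}$.

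\emph{Finite jumps and boundedness.} Take $\mathcal V_1=\Phi(\tilde R,q)+\tfrac{1}{\gamma_I}\|\tilde b_\omega\|^{2}$. Along flows the metric \eqref{metric} and \eqref{id::5} give $\tfrac{d}{dt}\Phi(\tilde R,q)=\lls[\nu]_\times,[\omega_{\tilde R}]_\times\ggs=2\nu^{\top}\omega_{\tilde R}$, so the $\tilde b_\omega$–cross terms cancel and $\dot{\mathcal V}_1=-\gamma_P\|\nabla\Phi(\tilde R,q)\|_F^{2}\le-\gamma_P\alpha_3|\tilde R|_I^{2}\le0$ on $\mathcal F\subseteq\mathcal D$, by \eqref{ineq::DPhi}. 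At a jump, $\mathcal V_1^{+}-\mathcal V_1=\Phi(\tilde R,q^{+})-\Phi(\tilde R,q)\le-\delta$. Hence $\mathcal V_1$ is nonincreasing along solutions, each jump costs at least $\delta$, so the number of jumps is $\le \mathcal V_1(t_0,0)/\delta<\infty$; moreover $|\tilde R|_I\le1$ and $\|\tilde b_\omega\|^{2}\le\gamma_I\mathcal V_1(t_0,0)$, and with $\omega$ bounded and \eqref{ineq::DPhi} every closed-loop signal is bounded.

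\emph{Exponential rate.} For the rate I would augment $\mathcal V_1$ with a jump-invariant cross term coupling the attitude and bias errors, e.g.\ $\mathcal V_2=\mathcal V_1+\varepsilon\,\chi$ with $\chi:=\psi(\tilde R)^{\top}\hat R\tilde b_\omega$, $\varepsilon>0$ small. From \eqref{eq::psi_norm}--\eqref{ineq::alpha} with $A=I$ one has $\|\psi(\tilde R)\|\le2|\tilde R|_I$, so $|\chi|\le|\tilde R|_I^{2}+\|\tilde b_\omega\|^{2}=\|e\|^{2}$ and, using \eqref{ineq::Phi}, for small $\varepsilon$ there exist $0<\underline c<\overline c$ with $\underline c\,\|e\|^{2}\le\mathcal V_2\le\overline c\,\|e\|^{2}$. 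Differentiating along flows — using Lemma~\ref{lemma::derivatives} ($\tfrac{d}{dt}\psi(\tilde R)=E(\tilde R)\omega_{\tilde R}$, $E(\tilde R)=\tfrac12(\mathrm{tr}(\tilde R)I-\tilde R^{\top})$), the error dynamics, the bounds \eqref{ineq::Phi}--\eqref{ineq::DPhi}, the estimates \eqref{ineq::E_1}--\eqref{ineq::E_2} of Lemma~\ref{lemma::identities_SO(3)} with $A=I$, and $\bar\omega:=\sup_t\|\omega(t)\|<\infty$ — the term $\varepsilon\dot\chi$ produces a negative $\|\tilde b_\omega\|^{2}$ contribution, while every remaining term is either $O(|\tilde R|_I^{2})$, absorbed by the $-\gamma_P\alpha_3|\tilde R|_I^{2}$ available from $\dot{\mathcal V}_1$, or $O(|\tilde R|_I\|\tilde b_\omega\|)$, split by Young's inequality. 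For $\varepsilon$ small enough this gives $\dot{\mathcal V}_2\le-c_3\|e\|^{2}\le-c_3'\mathcal V_2$ along flows, and $\mathcal V_2^{+}-\mathcal V_2=\mathcal V_1^{+}-\mathcal V_1\le-\delta\le0$ at jumps since $\psi(\tilde R)$ and $\tilde b_\omega$ do not jump. Therefore $\mathcal V_2$ decays exponentially along every hybrid solution and the sandwich yields $\|e(t,j)\|\le\sqrt{\overline c/\underline c}\,\|e(t_0,0)\|\,e^{-(c_3'/2)(t-t_0)}$, i.e.\ uniform global exponential stability of $e=0$.

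\emph{Main obstacle.} The delicate point is the sign of $\dot{\mathcal V}_2$: the $\|\tilde b_\omega\|^{2}$–dissipation coming from $\dot\chi$ relies on the symmetric part of $E(\tilde R)$ being positive, which by \eqref{ineq::E_1} with $A=I$ degrades as $\tilde R$ leaves $I$ and is lost beyond a quarter turn. One then has to show that on that complementary region the uniformly-bounded-below dissipation $\gamma_P\|\nabla\Phi\|_F^{2}\ge\gamma_P\alpha_3|\tilde R|_I^{2}$ from $\dot{\mathcal V}_1$ still dominates the indefinite part of $\varepsilon\dot\chi$ — which is what ties the admissible $\varepsilon$ to $\gamma_P,\gamma_I$ and the $\alpha_i$ — possibly after weighting $\chi$ by a state-dependent factor that vanishes where $|\tilde R|_I$ is large; the remaining estimates are routine uses of the identities in Lemmas~\ref{lemma::derivatives}--\ref{lemma::identities_SO(3)}.
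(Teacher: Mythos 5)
Your proposal follows essentially the same route as the paper's proof: the same weak function $\mathfrak{L}_0=\Phi(\tilde R,q)+\gamma_I^{-1}\|\tilde b_\omega\|^2$ for boundedness, stability and the jump count, and the same cross term $\mu\,\tilde b_\omega^\top\hat R^\top\psi(\tilde R)$ treated via Lemma~\ref{lemma::derivatives} and the bounds \eqref{ineq::E_1}--\eqref{ineq::E_2}, with the weight chosen small to make the quadratic forms definite. The obstacle you flag is resolved exactly by your first suggestion and needs no state-dependent weighting: since $\|\tilde b_\omega\|$ is already bounded by the $\mathfrak{L}_0$ argument (a constant $c_b$), the indefinite contribution $2|\tilde R|_I^2\|\tilde b_\omega\|^2\le 2c_b^2|\tilde R|_I^2$ is $O(|\tilde R|_I^2)$ and is absorbed into $-\gamma_P\alpha_3|\tilde R|_I^2$ for $\mu$ (your $\varepsilon$) small enough, the paper additionally exploiting the $-\delta$ jump decrease to obtain decay in hybrid time $t+j$ rather than in $t$ alone.
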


\begin{proof}
In view of \eqref{kinematic}, \eqref{omega:measured} and \eqref{observer_hybrid}-\eqref{bias_hybrid}, one obtains
        \begin{align*}
        \dot{\tilde R}&=\dot R\hat R^\top-R\hat R^\top\dot{\hat R}\hat R^\top\\
        					&=R[\omega]_\times\hat R^\top-R\left[\omega^y-\hat b_\omega+\gamma_P\beta\big(\Phi(\tilde R,q)\big)\right]_\times\hat R^\top\\
        					&=\tilde R\left[\hat R\big(-\tilde b_\omega-\gamma_P\beta\big(\Phi(\tilde R,q)\big)\big)\right]_\times,
        \end{align*}
        where the fact that $[u]_\times P^\top=P^\top[Pu]_\times$, for all $u\in\mathbb{R}^3$ and $P\in SO(3)$, has been used to obtain the last equality. The closed loop dynamics during the flows of $\mathcal{F}\times\mathbb{R}^3$ are given then by
        \begin{align}\label{tilde R}
            \dot{\tilde R}&=\tilde R\left[\hat R\big(-\tilde b_\omega-\gamma_P\beta\big(\Phi(\tilde R,q)\big)\big)\right]_\times,\\
               \dot q&=0,\\\label{tilde b}
            \dot{\tilde b}_\omega&=\gamma_I\beta\big(\Phi(\tilde R,q)\big)\big),
        \end{align}

        First, we show that $\tilde b_\omega$ is bounded. To this end, we consider the following real-valued function on $SO(3)\times\mathcal{Q}\times\mathbb{R}^3$
        \begin{align}\label{L0}
         \mathfrak{L}_0(\tilde R, q, \tilde b_\omega)=\Phi(\tilde R,q)+\frac{1}{\gamma_I}\|\tilde b_\omega\|^2,
        \end{align}
        which is positive definite with respect to $$\bar{\mathcal{A}}:=\{(\tilde R, q, \tilde b_\omega)\in SO(3)\times\mathcal{Q}\times \mathbb{R}^3\mid \tilde R=I, \tilde{b}_\omega = 0\}.$$ 
        Now making use of \eqref{id::4}-\eqref{id::5}, the time derivative of $\mathfrak{L}_0$, along the trajectories of \eqref{tilde R}-\eqref{tilde b}, can be shown to satisfy
           \begin{align}
            \dot{\mathfrak{L}}_0(\tilde R,q,\tilde b_\omega)=&\lls\nabla\Phi(\tilde R,q),\tilde R\left[\hat R\Big(-\tilde b_\omega-\gamma_P\beta\big(\Phi(\tilde R,q)\big)\Big)\right]_\times\ggs\nonumber\\
            &+ \frac{2}{\gamma_I}\tilde b_\omega^\top\left(\gamma_I\beta\big(\Phi(\tilde R,q)\big)\right)\nonumber\\
            =&-\gamma_P\lls\nabla\Phi(\tilde R,q),\tilde R\Big[\hat R\beta\big(\Phi(\tilde R,q)\big)\Big]_\times\ggs\nonumber\\
            &-2\Big[\tilde R^\top\nabla\Phi(\tilde R,q)\Big]_\otimes^\top\hat R\tilde b_\omega+2\tilde b_\omega^\top\beta\big(\Phi(\tilde R,q)\big)\nonumber\\
           =&-\gamma_P\|\nabla\Phi(\tilde R,q)\|_F^2\leq0,\label{dL0}
           \end{align}
            for all $(\tilde{R}, q,\tilde b_\omega)\in\mathcal{F}\times\mathbb{R}^3$, where \eqref{beta_hybrid} has been used to obtain the last equality. Therefore, $\mathfrak{L}_0$ is non-increasing along the flows of $\mathcal{F}\times\mathbb{R}^3$. Also, for all $(\tilde R,q)\in\mathcal{J}$ and $q^+\in\mathrm{arg}\underset{p\in{\mathcal{Q}}}{\mathrm{min}}\;\Phi(\tilde R,p)$, one has
            \begin{align}
            \mathfrak{L}_0(\tilde R, q^+,\tilde b_\omega)&-\mathfrak{L}_0(\tilde R,q,\tilde b_\omega)=~\Phi(\tilde R,q^+)-\Phi(\tilde R,q)\leq-\delta,\label{diffL0}
            \end{align}
            which implies that $\mathfrak{L}_0$ is strictly decreasing over the jumps. Consequently, the set $\bar{\mathcal{A}}$ is stable by \cite[Theorem 7.6]{Teel2007}. Since the positive definite function $\mathfrak{L}_0$ is non-increasing, every solution is bounded preventing escape time. It should be noted that, since jumps map the state to $\mathcal{F}\setminus\mathcal{J}$, it follows from \cite[Proposition 2.4]{Goebel2006} that every solution is complete\footnote{ A solution  $\mathfrak{h}(t,j)$ to a hybrid system is complete if $\mathrm{dom} \mathfrak{h}$ is unbounded \cite{teel2013lyapunov}, {\it i.e.}, there is an infinite number of jumps and/or the continuous time is infinite.}.

Now, consider the Lyapunov function candidate
            \begin{equation}\label{Lypa:pf:Thm1}
            \mathfrak{L}(\tilde R,\hat R, q,\tilde b_\omega)=\mathfrak{L}_0(\tilde R, q, \tilde b_\omega)+\mu\tilde b_\omega^\top\hat R^\top\psi(\tilde R),
            \end{equation}
           where $\mathfrak{L}_0$ is given in \eqref{L0} and $\mu>0$. Using \eqref{ineq::tr}-\eqref{eq::psi_norm}, it can be verified  that
           \begin{align}\label{ineq::psi}
           \|\psi(\tilde R)\|^2=4|\tilde R|_I^2\big(1-|\tilde R|_I^2\big).
           \end{align}
            Hence, \eqref{ineq::Phi} and \eqref{ineq::psi} can be used to show that the function $\mathfrak{L}$ satisfies the quadratic inequality
                \begin{equation}\label{ineq_L}
                e^\top P_1 e \leq \mathfrak{L} \leq e^\top P_2 e,
                \end{equation}
            where $e=\big(|\tilde R|_I,\|\tilde b_\omega\|\big)^\top$ and
            $$P_1=\begin{bmatrix}
                        \alpha_1&-\mu\\ -\mu&1/\gamma_I
                    \end{bmatrix}, \qquad
                    P_2=\begin{bmatrix}
                        \alpha_2&\mu\\ \mu&1/\gamma_I
                    \end{bmatrix}.$$
            The matrices $P_1$ and $P_2$ are positive definite provided that $0<\mu<\sqrt{\alpha_1/\gamma_I}$ and, in this case, the function $\mathfrak{L}$ is positive definite with respect to the equilibrium $e=0$.

            Now, we need to evaluate the time derivative of $\mathfrak{L}$ along the flows of $\mathcal{F}\times\mathbb{R}^3$.   Making use of \eqref{eq::dpsi} with \eqref{observer_hybrid} and \eqref{tilde R}-\eqref{tilde b}, the time derivative of $\mathfrak{X}(\tilde R,\hat R,\tilde b_\omega):=\tilde b_\omega^\top\hat R^\top\psi(\tilde R)$ along the flows of $\mathcal{F}\times\mathbb{R}^3$ is obtained as
            \begin{align}
                \dot{\mathfrak{X}}(\tilde R,\hat R,\tilde b_\omega)=&\tilde b_\omega^\top\hat R^\top\dot\psi(\tilde R)-\tilde b_\omega^\top[\omega-\tilde b_\omega+\gamma_P\beta]_\times\hat R^\top\psi(\tilde R)\nonumber +\dot{\tilde b}_\omega^\top\hat R^\top\psi(\tilde R)\nonumber\\
                =&~\tilde b_\omega^\top\hat R^\top E(\tilde R)\left(\hat R(-\tilde b_\omega-\gamma_P\beta)\right)\nonumber\\
                &~-\tilde b_\omega^\top[\omega+\gamma_P\beta]_\times\hat R^\top\psi(\tilde R)+\gamma_I\beta^\top\hat R^\top\psi(\tilde R)\nonumber,
            \end{align}
            where the arguments of $\beta$ have been omitted for simplicity. Let $c_\omega :=\sup_{t\geq 0}\omega(t)$ and $c_b:=\sup_{(t,j)\succeq (t_0,0)}\|\tilde b_\omega(t,j)\|$, which exist, respectively, in view of our assumption on $\omega(t)$ and the fact that all solutions are bounded. One can verify from \eqref{beta_hybrid} that
           $$\|\beta\|^2=\frac{1}{2}\|\nabla\Phi(\tilde R,q)\|_F^2\leq\frac{1}{2}\alpha_4|\tilde R|_I^2.$$
            Therefore, in view of inequalities \eqref{ineq::E_1}-\eqref{ineq::E_2}, \eqref{ineq::psi} and the above results, the time derivative of the cross term $\mathfrak{X}(\tilde R,\hat R,\tilde b_\omega)$ during the flows of $\mathcal{F}\times\mathbb{R}^3$ satisfies
            \begin{align}
                \dot{\mathfrak{X}}(\tilde R,\hat R,\tilde b_\omega) \leq&~-\|\tilde b_\omega\|^2+2|\tilde R|_I^2\|\tilde b_\omega\|^2+\gamma_P\sqrt{3}\|\tilde b_\omega\|\|\beta\|\nonumber\\
                &~+\gamma_I\|\beta\|\big\|\psi(\tilde R)\big\|+c_\omega\|\tilde b_\omega\|\big\|\psi(\tilde R)\big\|	\nonumber\\ \nonumber
                &~+\gamma_Pc_b\|\beta\|\big\|\psi(\tilde R)\big\|\\
                \leq&~ e^\top P_\mathfrak{X} e,\label{dot_frak_X}
            \end{align}
            with the matrix $P_\mathfrak{X}$ being defined as

        $$P_{\mathfrak{X}}=\begin{bmatrix}
       \sqrt{2\alpha_4}(\gamma_I+c_b\gamma_P)+2c_b^2&c_\omega+\gamma_P\sqrt{\frac{3\alpha_4}{8}}\\
        c_\omega+\gamma_P\sqrt{\frac{3\alpha_4}{8}}&-1
        \end{bmatrix}.$$

        Now, making use of inequality \eqref{dot_frak_X}, the expression of the time derivative of $\mathfrak{L}_0$ in \eqref{dL0} along with inequality \eqref{ineq::DPhi}, one can show that the time-derivative of $\mathfrak{L}$ in \eqref{Lypa:pf:Thm1} satisfies
        \begin{equation}
            \dot{\mathfrak{L}}\leq -e^\top
            \underbrace{\left( \begin{bmatrix}
            		\gamma_P\alpha_3&0\\0&0
            				\end{bmatrix}-\mu P_\mathfrak{X}\right)}_{P_3}e,\label{dL}
            \end{equation}
        for all $(\tilde{R}, q,\tilde b_\omega)\in\mathcal{F}\times\mathbb{R}^3$. To guarantee that the matrices $P_1,~P_2$ and $P_3$ are positive definite, it is sufficient to pick $\mu$ such that
        \begin{equation*}
        0<\mu<\min\left\{\frac{\sqrt{\alpha_1}}{\sqrt{\gamma_I}},\frac{\gamma_P\alpha_3}{P_{\mathfrak{X}11}+P_{\mathfrak{X}12}^2}\right\},
        \end{equation*}
        where $P_{\mathfrak{X}11}= \sqrt{2\alpha_4}(\gamma_I+c_b\gamma_P)+2c_b^2$ and $P_{\mathfrak{X}12}= c_\omega+\gamma_P\sqrt{3\alpha_4/8}$.

        Then, using \eqref{dL} with \eqref{ineq_L}, we conclude that $\dot{\mathfrak{L}}\leq-\lambda_F\mathfrak{L}$ with $\lambda_F:= \lambda_{\min}^{P_3}/\lambda_{\max}^{P_2}$; $\mathfrak{L}$ is exponentially decreasing along the flows of $\mathcal{F}$. Equivalently, one has\footnote{ Note that for the sake of presentation simplicity, we used  $\mathfrak{L}(t,j)$ to denote $\mathfrak{L}\big(\tilde R(t,j),\hat R(t,j),q(t,j),\tilde b_\omega(t,j)\big)$. }
         \begin{eqnarray}
        \label{L1}
        \mathfrak{L}(t,j)&\leq& e^{-\lambda_F(t-t^\prime)}\mathfrak{L}(t^\prime,j),
        \end{eqnarray}
        for all $(t,j),~(t^\prime,j)\in\mathrm{dom}~(\tilde R,q, \tilde{b}_{\omega})$ with $(t,j)\succeq (t^\prime,j)$.
        Furthermore, it can verified from \eqref{Lypa:pf:Thm1} and \eqref{diffL0} that
        \begin{equation}\label{diffL}\mathfrak{L}(\tilde R, \hat R, q^+,\tilde b_\omega)-\mathfrak{L}(\tilde R,\hat R,q,\tilde b_\omega)
        \leq -\delta,
        \end{equation}
        for all $(\tilde R,q)\in\mathcal{J}$ and $q^+\in\mathrm{arg}\underset{p\in{\mathcal{Q}}}{\mathrm{min}}\;\Phi(\tilde R,p)$. Consequently, one can conclude that  $\mathfrak{L}$ is strictly decreasing over the jumps of $\mathcal{J}$. Equivalently, for all $(t,j)\in\mathrm{dom}~(\tilde R,q, \tilde{b}_{\omega})$ such that $(t,j+1)\in\mathrm{dom}~(\tilde R,q, \tilde{b}_{\omega})$, one has
        \begin{equation}
           \label{L2}
           \mathfrak{L}(t,j+1)-\mathfrak{L}(t,j)\leq -\delta.
        \end{equation}
        It is clear from \eqref{ineq_L}, \eqref{L1} and \eqref{L2}, that the results of the theorem are trivial in the case where there is no discrete jumps, \textit{i.e.,} $j=0$. Therefore, in the remainder of this proof, we will consider only the case where $j\geq 1$.
        From  (\ref{L1})-(\ref{L2}), one can easily show that
        \begin{eqnarray}
        \label{L3}
        0< \mathfrak{L}(t,j) \leq \mathfrak{L}(t_0,0)-\delta j,
        \end{eqnarray}
        which leads to
        \begin{eqnarray}
        \label{L4}
        j \leq \frac{\mathfrak{L}(t_0,0)-\mathfrak{L}(t,j)}{\delta} <  \frac{\mathfrak{L}(t_0,0)}{\delta}.
        \end{eqnarray}
This shows that the number of jumps is finite. Since the solution is complete and the number of jumps is bounded, the hybrid time domain of the solution takes the form $\mathrm{dom}~(\tilde R,q, \tilde{b}_{\omega})=\cup_{j=0}^{j_{\max}-1}\big([t_j,t_{j+1}]\times\{j\}\big)\cup[t_{j_{\max}},+\infty)\times\{j_{\max}\}$ where $j_{\max}$ denotes the maximum number of discrete jumps.

         Now, one can show from \eqref{L2} that
        \begin{equation}
        \label{L5}
        \mathfrak{L}(t,j+1)\leq\left(1-\frac{\delta}{\mathfrak{L}(t_0,0)}\right)\mathfrak{L}(t,j)\leq e^{-\sigma}\mathfrak{L}(t,j),
        \end{equation}
        for all $(t,j)\in\mathrm{dom}~(\tilde R,q, \tilde{b}_{\omega})$ such that $(t,j+1)\in\mathrm{dom}~(\tilde R,q, \tilde{b}_{\omega})$, where $\sigma=-\ln\left(1-\frac{\delta}{\mathfrak{L}(t_0,0)}\right)$. Note that $\delta<\mathfrak{L}(t_0,0)$ 
        as per \eqref{L4} (and $j\geq 1$).
        Now, we will show, by induction, that the following inequality holds for all $ j\geq 1$
        \begin{eqnarray}
        \label{L6}
        \mathfrak{L}(t,j)\leq e^{-\lambda(t-t_0+j)}\mathfrak{L}(t_{0},0),
        \end{eqnarray}
        with $\lambda=\min\{\lambda_F, \sigma\}$. Using \eqref{L1} and \eqref{L5}, one can easily show that (\ref{L6}) is satisfied for $j=1$ as follows. Assume that $(t,1)\in\mathrm{dom}~(\tilde R,q, \tilde{b}_{\omega})$ then
        \[
        \begin{array}{rcl}
        \mathfrak{L}(t,1)&\leq& e^{-\lambda_F(t-t_{1})}\mathfrak{L}(t_{1},1)\\
        ~&\leq& e^{-\lambda_F(t-t_{1})} e^{-\sigma}\mathfrak{L}(t_{1},0)\\
        ~&\leq& e^{-\lambda(t-t_{1})} e^{-\lambda}e^{-\lambda(t_1-t_{0})}\mathfrak{L}(t_{0},0)\\
        ~&\leq& e^{-\lambda(t-t_0+1)}\mathfrak{L}(t_{0},0).
        \end{array}
        \]
        Assuming that (\ref{L6}) holds true for $j=k$, and using (\ref{L1}) and (\ref{L5}), one has
        \[
        \begin{array}{rcl}
        \mathfrak{L}(t,k+1)&\leq& e^{-\lambda_F(t-t_{k+1})}\mathfrak{L}(t_{k+1},k+1)\\
        ~&\leq& e^{-\lambda_F(t-t_{k+1})} e^{-\sigma}\mathfrak{L}(t_{k+1},k)\\
        ~&\leq& e^{-\lambda(t-t_{k+1})} e^{-\lambda}e^{-\lambda(t_{k+1}-t_0+k)}\mathfrak{L}(t_0,0)\\
        ~&\leq& e^{-\lambda(t-t_0+k+1)}\mathfrak{L}(t_0,0),
        \end{array}
        \]
        for $(t,k+1)\in\mathrm{dom}~(\tilde R,q, \tilde{b}_{\omega})$. Therefore, inequality (\ref{L6}) also holds for $j=k+1$ and hence it holds true for all $j\geq 1$.
        Finally, in view of (\ref{L6}) and \eqref{ineq_L}, one can conclude that

        \begin{equation}\label{estimate:error:thm1}
                |e(t,j)|_I^2 \leq k_e e^{-\lambda(t-t_0+j)} |e(t_0, 0)|_I^2,
            \end{equation}
        for all $(t,j)\in \mathrm{dom}~(\tilde{R}, q, \tilde{b}_\omega)$, where  $k_e = \lambda_{\max}^{P_2}/\lambda_{\min}^{P_1}$.
%
        The proof is complete.
\end{proof}
        Theorem \ref{theorem1} provides sufficient conditions on the potential function $\Phi$ ensuring that the hybrid attitude and gyro-bias estimation scheme \eqref{observer_hybrid}-\eqref{J} guarantees global exponential stability of the estimation errors. It can be noticed that the estimator \eqref{observer_hybrid}-\eqref{J} relies on the rotation matrix $\tilde{R}$ which is not directly available for feedback. The choice of the potential function $\Phi$ with the parameters of the hybrid mechanism in \eqref{q}-\eqref{J}, and the implementation of the proposed observer using the available measurements will be discussed in detail in the next subsections.

        Before this, we consider the following two special cases that require some modification of the estimation algorithm in Theorem~\ref{theorem1}. In the case where it is required to guarantee {\it a priori} bounded bias estimates, which is generally desirable in adaptive control algorithms, the proposed estimation algorithm can be modified using a projection mechanism provided that the unknown bias $b_\omega$ satisfies $\|b_\omega\| \leq \bar{b}_\omega$ for some known $\bar{b}_\omega$.  In particular, the adaptation law \eqref{bias_hybrid} can be replaced by
        \begin{equation}\label{bias_hybrid_projection}
         \dot{\hat b}_\omega=\mathrm{Proj}\left(-\gamma_I\beta(\Phi(\tilde R,q)\big),\hat b_\omega\right),
        \end{equation}
        where $\mathrm{Proj}(\mu,\hat b_\omega)=p(\hat b_\omega)\mu$, with $p(\hat b_\omega)=I$ if $\|\hat b_\omega\|\leq\bar b_\omega$ or $\hat b_\omega^\top\mu\leq 0$, otherwise $p(\hat b_\omega)=I-\hat b_\omega\hat b_\omega^\top/\|\hat b_\omega\|^2$, and satisfies the following properties \cite{marino1998robust}:\vspace{0.01 in}
            \begin{itemize}
            \item [(P1)] $\|\hat b_\omega(t,j)\|\leq\bar b_\omega,\;\forall (t,j)\succeq (t_0,0)$,
            \item [(P2)] $(\hat b_\omega-b_\omega)^\top\mathrm{Proj}(\mu,\hat b_\omega)\leq(\hat b_\omega-b_\omega)^\top\mu,$, 
            \item [(P3)] $\|\mathrm{Proj}(\mu,\hat b_\omega)\|\leq\|\mu\|$.
            \end{itemize}
\begin{cor}\label{corollary::Projection}
         Consider system \eqref{kinematic} with the observer \eqref{observer_hybrid} with \eqref{bias_hybrid_projection} and \eqref{beta_hybrid}-\eqref{J}, where $\|b_\omega\| \leq \bar{b}_\omega$. Let the potential function $\Phi$ and $\delta$ be selected as in Theorem~\ref{theorem1}
          and assume that the angular velocity $\omega(t)$ is uniformly bounded.
          Then, the equilibrium point $e = 0$, with $e:=(|\tilde R|_I,\|\hat b_\omega - b_\omega\|)^\top$, is uniformly globally exponentially stable and the number of discrete jumps is bounded.
\end{cor}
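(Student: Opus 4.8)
The plan is to mirror the proof of Theorem~\ref{theorem1}, re-establishing each step while keeping track of where the projection operator enters. First I would derive the closed-loop error dynamics: with \eqref{bias_hybrid_projection} in place of \eqref{bias_hybrid}, the attitude error equation \eqref{tilde R} is unchanged, but the bias error now satisfies $\dot{\tilde b}_\omega=-\dot{\hat b}_\omega=-\mathrm{Proj}(-\gamma_I\beta(\Phi(\tilde R,q)),\hat b_\omega)$. I would note that property (P1) immediately gives $\|\hat b_\omega(t,j)\|\leq\bar b_\omega$, hence $\|\tilde b_\omega\|\leq\bar b_\omega+\|b_\omega\|\leq 2\bar b_\omega$ is bounded \emph{a priori}, so the constant $c_b$ in the proof of Theorem~\ref{theorem1} exists without needing the boundedness-of-solutions argument first.

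Next I would redo the Lyapunov analysis. Using the same $\mathfrak{L}_0$ in \eqref{L0}, its derivative along flows becomes
\begin{align*}
\dot{\mathfrak L}_0=\lls\nabla\Phi,\tilde R[\hat R(-\tilde b_\omega-\gamma_P\beta)]_\times\ggs-\tfrac{2}{\gamma_I}\tilde b_\omega^\top\mathrm{Proj}\big(-\gamma_I\beta,\hat b_\omega\big).
\end{align*}
Here property (P2), applied with $\mu=-\gamma_I\beta$ and using $\hat b_\omega-b_\omega=-\tilde b_\omega$, gives $-\tilde b_\omega^\top\mathrm{Proj}(-\gamma_I\beta,\hat b_\omega)\le -\tilde b_\omega^\top\gamma_I\beta$, which is exactly the term that produced the cancellation in \eqref{dL0}; so one still obtains $\dot{\mathfrak L}_0\le-\gamma_P\|\nabla\Phi(\tilde R,q)\|_F^2\le 0$ along flows, and \eqref{diffL0} over jumps is unaffected since the jump map does not touch $\hat b_\omega$. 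Thus $\bar{\mathcal A}$ is stable, solutions are bounded and complete, and the finite-jump count bound \eqref{L4} follows verbatim. Then for the cross-term Lyapunov function $\mathfrak{L}$ in \eqref{Lypa:pf:Thm1}, the only change in $\dot{\mathfrak X}$ is that the term $\dot{\tilde b}_\omega^\top\hat R^\top\psi(\tilde R)$ is now $-\mathrm{Proj}(-\gamma_I\beta,\hat b_\omega)^\top\hat R^\top\psi(\tilde R)$; by property (P3) its norm is bounded by $\|\gamma_I\beta\|\,\|\psi(\tilde R)\|$, the same bound used to build $P_{\mathfrak X}$. Hence \eqref{dot_frak_X}, \eqref{dL}, the choice of $\mu$, and the exponential-decay chain \eqref{L1}--\eqref{estimate:error:thm1} all go through unchanged, now with $e=(|\tilde R|_I,\|\hat b_\omega-b_\omega\|)^\top$.

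Concretely, the steps in order are: (i) write the modified error dynamics and invoke (P1) to get an \emph{a priori} bound on $\tilde b_\omega$; (ii) recompute $\dot{\mathfrak L}_0$ along flows and use (P2) to recover $\dot{\mathfrak L}_0\le-\gamma_P\|\nabla\Phi\|_F^2$, and note the jump inequality is unchanged; (iii) conclude stability of $\bar{\mathcal A}$, completeness of solutions, and the finite bound on the number of jumps exactly as in Theorem~\ref{theorem1}; (iv) recompute $\dot{\mathfrak X}$, using (P3) to bound the modified term by the same quantity, so that the matrix $P_{\mathfrak X}$ and the admissible range of $\mu$ are as before; (v) assemble $\dot{\mathfrak L}\le-\lambda_F\mathfrak L$ along flows and $\mathfrak L$ strictly decreasing over jumps, and run the induction giving \eqref{L6} and hence the exponential estimate \eqref{estimate:error:thm1}. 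The only genuinely new ingredient — and the one place to be careful — is verifying that properties (P1)--(P3) slot in exactly where the unmodified adaptation law was used: (P2) is what preserves the sign-definiteness of $\dot{\mathfrak L}_0$, and (P3) is what keeps the cross-term bound (and therefore $P_{\mathfrak X}$, and therefore the whole quadratic-form argument) intact. Since these are precisely the standard projection properties, no step should present a real obstacle; the proof is essentially a decorated repetition of Theorem~\ref{theorem1}, so I would keep it short, pointing to the earlier proof for the unchanged computations rather than reproducing them.
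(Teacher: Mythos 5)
Your proposal is correct and follows essentially the same route as the paper: reuse $\mathfrak{L}_0$ and $\mathfrak{L}$ from Theorem~\ref{theorem1}, invoke (P2) to recover $\dot{\mathfrak{L}}_0\leq-\gamma_P\|\nabla\Phi(\tilde R,q)\|_F^2$, (P3) to keep the cross-term bound and hence $P_{\mathfrak{X}}$ unchanged, and (P1) to get the a priori bound $c_b=2\bar b_\omega$, with the jump inequalities untouched. Only fix the sign slip in your application of (P2): with $\mu=-\gamma_I\beta$ and $\hat b_\omega-b_\omega=-\tilde b_\omega$ it gives $-\tilde b_\omega^\top\mathrm{Proj}(-\gamma_I\beta,\hat b_\omega)\leq+\gamma_I\tilde b_\omega^\top\beta$ (not $-\tilde b_\omega^\top\gamma_I\beta$), which is precisely the term that cancels as in \eqref{dL0}.
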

        \begin{proof}
        Consider the Lyapunov function candidate $\mathfrak{L}$, given in \eqref{Lypa:pf:Thm1} with \eqref{L0}, and satisfies \eqref{ineq_L}. Exploiting the properties P1-P3 and following similar steps in \eqref{dL0}-\eqref{diffL0} and \eqref{dot_frak_X}, one can show that the time-derivative of $\mathfrak{L}$ satisfies \eqref{dL} and \eqref{diffL} with $c_b = 2\bar{b}_\omega$. Then, the result of the corollary follows using the same arguments after \eqref{dL} in the proof of Theorem~\ref{theorem1}.
        \end{proof}

        In the case where the angular velocity measurements are given by \eqref{omega:measured} with $b_\omega \equiv 0$ \textit{i.e.,}, unbiased angular velocity measurements, the following corollary of Theorem~\ref{theorem1} can be shown.
\begin{cor}\label{corollary::exponential bound}
         Consider the attitude kinematics \eqref{kinematic} coupled with the observer \eqref{observer_hybrid} with  \eqref{beta_hybrid}-\eqref{J} and $\hat{b}_\omega \equiv 0$. Let the potential function $\Phi$ and $\delta$ be selected as in Theorem~\ref{theorem1}. Then, the number of discrete jumps is bounded and the equilibrium point $|\tilde R|_I = 0$ is uniformly globally exponentially stable. More precisely,
        \begin{equation}\label{estimate_error_cor2}
        |\tilde R(t,j)|_I^2\leq\frac{\alpha_2}{\alpha_1}e^{-\frac{\gamma_P\alpha_3}{\alpha_2}(t-t_0)}|\tilde R(t_0,0)|_I^2-\frac{\delta}{\alpha_1}\sum_{s=1}^{j}e^{-\frac{\gamma_P\alpha_3}{\alpha_2}(t-t_s)},
        \end{equation}
        for all $(t,j)\in\mathrm{dom}(\tilde R,q)$, where $\alpha_i$, $i=1,...,3$ are given in \eqref{ineq::Phi}-\eqref{ineq::DPhi}. 
\end{cor}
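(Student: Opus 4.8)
The plan is to specialize the Lyapunov analysis of Theorem~\ref{theorem1} to the case $\hat b_\omega \equiv 0$, where $\tilde b_\omega$ disappears from the picture entirely, and then sharpen the resulting estimate so that the cross term is no longer needed. First I would observe that setting $\hat b_\omega \equiv 0$ forces $\tilde b_\omega = b_\omega$ to be irrelevant to the observer: the term $-\tilde b_\omega$ in \eqref{tilde R} is absent because the bias does not enter \eqref{observer_hybrid} when $\hat b_\omega\equiv 0$ and we are in the unbiased case $b_\omega\equiv 0$. Hence the closed-loop attitude error dynamics reduce to $\dot{\tilde R} = \tilde R[-\gamma_P\hat R\beta(\Phi(\tilde R,q))]_\times$ during flows, with $\dot q = 0$. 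I would then take the simple Lyapunov candidate $\mathfrak{L}_0(\tilde R,q) = \Phi(\tilde R,q)$ — no cross term, no bias term — so that the bounds \eqref{ineq::Phi} give directly $\alpha_1|\tilde R|_I^2 \leq \mathfrak{L}_0 \leq \alpha_2|\tilde R|_I^2$.

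Next I would compute $\dot{\mathfrak{L}}_0$ along flows exactly as in \eqref{dL0}: using \eqref{id::4}--\eqref{id::5} and the definition \eqref{beta_hybrid} of $\beta$, the chain rule gives $\dot{\mathfrak{L}}_0 = \langle\!\langle \nabla\Phi(\tilde R,q), \tilde R[-\gamma_P\hat R\beta]_\times\rangle\!\rangle = -\gamma_P\|\nabla\Phi(\tilde R,q)\|_F^2$, and then \eqref{ineq::DPhi} bounds this by $-\gamma_P\alpha_3|\tilde R|_I^2 \leq -(\gamma_P\alpha_3/\alpha_2)\mathfrak{L}_0$, valid on $\mathcal{F}$ (i.e. on $\mathcal{D}$, so the gradient is defined there). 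This yields the flow estimate $\mathfrak{L}_0(t,j)\leq e^{-(\gamma_P\alpha_3/\alpha_2)(t-t')}\mathfrak{L}_0(t',j)$ for $(t,j)\succeq(t',j)$ in the same interval. For jumps, \eqref{diffL0} still holds verbatim (it only used the definition of the jump map and \eqref{J}), giving $\mathfrak{L}_0(\tilde R,q^+) - \mathfrak{L}_0(\tilde R,q) \leq -\delta$ across each jump. The finiteness of the number of jumps then follows exactly as in \eqref{L3}--\eqref{L4}: $0 < \mathfrak{L}_0(t,j) \leq \mathfrak{L}_0(t_0,0) - \delta j$ forces $j < \mathfrak{L}_0(t_0,0)/\delta$. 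Completeness of solutions is inherited from the fact that jumps map into $\mathcal{F}\setminus\mathcal{J}$, exactly as cited via \cite[Proposition 2.4]{Goebel2006} in the proof of Theorem~\ref{theorem1}.

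To obtain the explicit bound \eqref{estimate_error_cor2}, I would iterate the flow and jump inequalities over the hybrid time domain. Writing $t_1 \leq t_2 \leq \cdots \leq t_j$ for the jump times up to $(t,j)$, repeated application of the flow estimate on each interval $[t_{s-1},t_s]$ combined with the jump drop of $\delta$ at each $t_s$ gives, by a short induction, $\mathfrak{L}_0(t,j) \leq e^{-(\gamma_P\alpha_3/\alpha_2)(t-t_0)}\mathfrak{L}_0(t_0,0) - \delta\sum_{s=1}^{j} e^{-(\gamma_P\alpha_3/\alpha_2)(t-t_s)}$; the key point is that a drop of $\delta$ occurring at time $t_s$ is subsequently attenuated only by the flow factor $e^{-(\gamma_P\alpha_3/\alpha_2)(t-t_s)}$, since $\mathfrak{L}_0$ is nonincreasing in the intervening flows. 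Finally, sandwiching with $\alpha_1|\tilde R|_I^2 \leq \mathfrak{L}_0 \leq \alpha_2|\tilde R|_I^2$ — using the lower bound on the left-hand side at $(t,j)$ and the upper bound on $\mathfrak{L}_0(t_0,0)$ and on each dropped term — converts this into \eqref{estimate_error_cor2}, and since the sum is nonpositive this also gives the clean exponential decay $|\tilde R(t,j)|_I^2 \leq (\alpha_2/\alpha_1) e^{-(\gamma_P\alpha_3/\alpha_2)(t-t_0)}|\tilde R(t_0,0)|_I^2$, i.e. uniform global exponential stability of $|\tilde R|_I = 0$.

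The main obstacle is purely bookkeeping: getting the induction over jump times right so that each $-\delta$ drop carries exactly the factor $e^{-(\gamma_P\alpha_3/\alpha_2)(t-t_s)}$ and no extra constants creep in, and being careful that the inequalities go the right way when passing between $\mathfrak{L}_0$ and $|\tilde R|_I^2$ (one wants $\alpha_1$ in the denominator multiplying both the exponential and the sum, which forces using the $\mathfrak{L}_0 \geq \alpha_1|\tilde R|_I^2$ bound on the left and the worst-case constants on the right). There is no genuine analytic difficulty here beyond what is already done in Theorem~\ref{theorem1}; the corollary is essentially that proof with the bias channel and the cross term deleted, plus one extra layer of care to retain the subtracted sum rather than discarding it.
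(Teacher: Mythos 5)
Your proposal is correct and follows essentially the same route as the paper: the paper's proof also takes $\bar{\mathfrak{L}}=\Phi(\tilde R,q)$, derives $\dot{\bar{\mathfrak{L}}}\leq-\gamma_P\alpha_3|\tilde R|_I^2\leq-(\gamma_P\alpha_3/\alpha_2)\Phi$ on flows and a drop of at least $\delta$ at jumps, concludes finiteness of jumps and completeness as in Theorem~\ref{theorem1}, and then performs the same induction over jump times to retain the subtracted sum $-\delta\sum_{s=1}^{j}e^{-\lambda_F(t-t_s)}$ before sandwiching with \eqref{ineq::Phi}. No substantive differences to report.
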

\begin{proof}
            Consider the Lyapunov function candidate $\bar{\mathfrak{L}}=\Phi(\tilde R,q)$. Following similar steps as in \eqref{L0}-\eqref{diffL0} in the proof of Theorem~\ref{theorem1}, the time-derivative of $\bar{\mathfrak{L}}$ is obtained as
            \begin{align}\label{dot:L:pf:cor2}
                \dot{\bar{\mathfrak{L}}}\leq-\gamma_P\alpha_3|\tilde R|_I^2\leq-\lambda_F\Phi(\tilde R,q),
            \end{align}
            with $\lambda_F=\gamma_P(\alpha_3/\alpha_2)$, during the flows of $\mathcal{F}$. Hence, with the definition of $\bar{\mathfrak{L}}$, one has
            \begin{align}\label{estimate:Phi:pf:cor2}
                \Phi(\tilde R(t,j),q(t,j))\leq e^{-\lambda_F(t-t^\prime)}\Phi(\tilde R(t^\prime,j),q(t^\prime,j)),
            \end{align}
            for all $(t,j),(t^\prime,j)\in\mathrm{dom}~(\tilde R,q)$ such that $(t,j)\succeq(t^\prime,j)$.  In addition, during the jumps of $\mathcal{J}$, one has
            \begin{align}\label{diff:L:pf:cor2}
            \Phi(\tilde R(t,j+1),q(t,j+1))-\Phi(\tilde R(t,j),q(t,j))\leq -\delta,
            \end{align}
             for all $(t,j)\in\mathrm{dom}~(\tilde R,q)$ such that $(t,j+1)\in\mathrm{dom}~(\tilde R,q)$.
Following similar steps as in the proof of Theorem \ref{theorem1}, it can be verified that the solution is complete, the number of jumps is bounded and the hybrid time domain of the solution takes the form $\mathrm{dom}~(\tilde R,q)=\cup_{j=0}^{j_{\max}-1}\big([t_j,t_{j+1}]\times\{j\}\big)\cup[t_{j_{\max}},+\infty)\times\{j_{\max}\}$ where $j_{\max}$ denotes the maximum number of discrete jumps. The global exponential stability of $|\tilde R|_I=0$ also follows using similar arguments as in the proof of Theorem \ref{theorem1}.

In the case where there is no discrete jump \textit{i.e.,} $j=0$, the bound \eqref{estimate_error_cor2} follows directly from \eqref{estimate:Phi:pf:cor2} and \eqref{ineq::Phi}. Note that the operator $\sum_{s=1}^j$ is understood to be zero when $j=0$. Now, let us consider the case when $j\geq 1$ and show, by induction, that the following inequality\footnote{ Also for the sake of presentation simplicity, we use $\Phi(t,j)$ to denote $\Phi\big(\tilde R(t,j),q(t,j)\big)$. } holds for all $j\geq 1$
\begin{equation}\label{Phi1}
\Phi(t,j)\leq e^{-\lambda_F(t-t_0)}\Phi(t_0,0)-\delta\sum_{s=1}^{j}e^{-\lambda_F(t-t_s)}.
\end{equation}
It is not difficult to show that \eqref{Phi1} holds for $j=1$. In fact, in view of \eqref{diff:L:pf:cor2}-\eqref{estimate:Phi:pf:cor2} and for all $(t,1)\in\mathrm{dom}~(\tilde R,q)$, one obtains
            \[
            \begin{array}{rcl}
             \Phi(t,1)&\leq&e^{-\lambda_F(t-t_1)}\Phi(t_1,1)\\
             &\leq&e^{-\lambda_F(t-t_1)}\big(\Phi(t_1,0)-\delta\big)\\
             &\leq&e^{-\lambda_F(t-t_1)}\big(e^{-\lambda_F(t_1-t_0)}\Phi(t_0,0)-\delta\big)\\
             &\leq&e^{-\lambda_F(t-t_0)}\Phi(t_0,0)-\delta e^{-\lambda_F(t-t_1)},
            \end{array}
            \]
which shows that inequality \eqref{Phi1} holds for $j=1$. Assuming that \eqref{Phi1} holds for $j=k$, and using \eqref{diff:L:pf:cor2}-\eqref{estimate:Phi:pf:cor2}, one has for $(t,k+1)\in\mathrm{dom}~(\tilde R,q)$
          \[
            \begin{array}{rcl}
             \Phi(t,k+1)&\leq&e^{-\lambda_F(t-t_{k+1})}\Phi(t_{k+1},k+1)\\
           &\leq&e^{-\lambda_F(t-t_{k+1})}\big(\Phi(t_{k+1},k)-\delta\big)\\
            &\leq&e^{-\lambda_F(t-t_{k+1})}\big(e^{-\lambda_F(t_{k+1}-t_0)}\Phi(t_0,0)\\
            &&\hspace{1.5cm}-\delta\sum_{s=1}^ke^{-\lambda_F(t_{k+1}-t_s)}-\delta\big)
            \\
           &\leq&e^{-\lambda_F(t-t_0)}\Phi(t_0,0)-\delta\sum_{s=1}^{k+1}e^{-\lambda_F(t-t_s)},
\end{array}
            \]
            which shows that \eqref{Phi1} also holds for $j=k+1$ and, hence, it holds true for all $j\geq 1$. Finally, making use of \eqref{ineq::Phi} and \eqref{Phi1}, inequality \eqref{estimate_error_cor2} follows.
\end{proof}
\begin{remark}
        The bound obtained in \eqref{estimate_error_cor2} depends on the properties of the potential function $\Phi$, especially the coefficients $\alpha_1,\alpha_2$ and $\alpha_3$, the observer gain $\gamma_P$ and the hysteresis gap $\delta$. It should be mentioned here that a similar estimate of the error vector $e$ in Theorem~\ref{theorem1} can be derived, however, it would depend on the unknown eigenvalues of matrix $P_3$ in \eqref{dL}.
\end{remark}
\subsection{Construction of the Potential Function $\Phi$}\label{section::potential}
            In this subsection, we construct potential functions $\Phi$, along with the hysteresis gap $\delta>0$, satisfying our assumptions in Theorem~\ref{theorem1}.
\vspace{0.05 in}
\subsubsection{Traditional potential functions}\label{section::traditional}
        Consider the following potential function on $SO(3)\times\{1\}$: 
                \begin{align}\label{UA}
        \Phi_s(\tilde R,1)=U_A(\tilde R)&:=\mathrm{tr}\big(A(I-\tilde R)\big)/4\lambda_{\max}^{\bar A},
        \end{align}
        where $A=A^\top$ such that $\bar{A}:=\frac{1}{2}(\mathrm{tr}(A)I-A)$ is positive definite. Note that the \textit{smooth} function $U_A$ has been widely used in attitude control systems design \cite{Koditschek,Mahony2008,Sanyal2009,berkane2015construction}. In view of \eqref{ineq::tr}, it can be seen that $\Phi_s(\tilde R,1)$ satisfies
            \begin{equation}\label{ineq::UA}
            \xi|\tilde R|_I^2\leq \Phi_s(\tilde R,1)\leq|\tilde R|_I^2,
            \end{equation}
        where $\xi:=\lambda_{\min}^{\bar A}/\lambda_{\max}^{\bar A}$ and, hence, $\Phi_s(\tilde R,1)$ satisfies the first condition \eqref{ineq::Phi} of Theorem \ref{theorem1}. However, it can be shown that $\Phi_s(\tilde R,1)$ does not satisfy \eqref{ineq::DPhi} in Theorem \ref{theorem1}. In particular, the relation $\alpha_3|\tilde R|_I^2\leq\|\nabla\Phi_s(\tilde R,1)\|_F^2,\;\alpha_3>0$, requires that $\nabla\Phi_s(\tilde R,1)$ does not vanish on the flow set $\mathcal{F}=SO(3)\times\{1\}$ except at $\tilde R = I$, which is not the case. In fact,
        the gradient of the potential function $\Phi_s(\tilde R,1)$ is given by (see \cite[Lemma 2]{berkane2015construction})
        \begin{equation}\label{dUA}
        \nabla\Phi_s(\tilde R,1)=\tilde R\mathbb{P}_a(A\tilde R)/4\lambda_{\max}^{\bar A},
        \end{equation}
        which, in view of \eqref{id::4}-\eqref{id::5} and \eqref{eq::psi_norm}, satisfies
        \begin{align*}
        \|\nabla\Phi_s(\tilde R,1)\|_F^2&= \frac{1}{16(\lambda_{\max}^{\bar A})^2}\lls\mathbb{P}_a(A\tilde R),\mathbb{P}_a(A\tilde R)\ggs\\
        												&= \frac{1}{8(\lambda_{\max}^{\bar A})^2}\|\psi(A\tilde R)\|^2,\\
        												&= \frac{1}{8(\lambda_{\max}^{\bar A})^2}\epsilon^\top\bar A^2\epsilon\big(1-\|\epsilon\|^2\cos^2(\phi)\big),
        \end{align*}
        where $\phi:=\angle(\epsilon,\bar A\epsilon)$ and $\epsilon$ is the vector part of the unit quaternion corresponding to the attitude matrix $\tilde R$. Consequently, the subset of $SO(3)\times\{1\}$ where $\nabla\Phi_s(\tilde R,1)=0$, called also the set of critical points of $\Phi_s(\tilde R,1)$, corresponds to $\mathcal{S}_I\times\{1\}$ and $\mathcal{S}_\pi\times\{1\}$ where
        \begin{equation}
        \label{S_I}\mathcal{S}_I:=\{\tilde R\in SO(3)\mid\tilde R=I\},\end{equation}
        \begin{equation}\label{S_pi}\mathcal{S}_\pi:=\{\tilde R\in SO(3)\mid\tilde R=\mathcal{R}_Q(0,v), v\in\mathcal{E}(A)\},
        \end{equation}
            with $\mathcal{E}(A)$ being the set of all eigenvectors of $A$.

        Therefore, the appearance of the undesired equilibrium points in $\mathcal{S}_\pi$ cannot be avoided when using an attitude observer design based on the gradient of $U_A$, as done in \cite{Mahony2008} with some matrix $A$. In addition, performance degradation (slow convergence) is reported in this case for large attitude errors \cite{lee2015observer,zlotnik2017nonlinear}. An alternate approach to design a gradient-based attitude observer on $SO(3)$ is to consider the following non-differentiable function 
        \begin{align}
        \Phi_{ns}(\tilde R,1)=V_A(\tilde R)&:=2\left(1-\sqrt{1-U_A(\tilde R)}\right).\label{VA}
        \end{align}
        Since $U_A(\tilde R)\in[0,1]$ and using \eqref{ineq::UA}, it is easy to verify that
        \begin{align}\label{ineq::VA}
        \xi|\tilde R|_I^2\leq U_A(\tilde R)\leq V_A(\tilde R)\leq2U_A(\tilde R)\leq 2|\tilde R|_I^2,
        \end{align}
        and hence $\Phi_{ns}(\tilde R,1)$ is also quadratic with respect to $|\tilde R|_I^2$; $\Phi_{ns}(\tilde R,1)$ satisfies \eqref{ineq::Phi} in Theorem~\ref{theorem1}. Note that we consider in \eqref{VA} a weighted version of the function $V_I$, obtained from \eqref{VA} with $A=I$, used in \cite{lee2012} where it has been shown that control systems designed based on $V_I$ exhibit faster convergence rates for large attitude manoeuvres as compared to those designed using the smooth function $U_A$. The potential function $V_I$ was also shown to be the solution to the kinematic optimal control problem on $SO(3)$ in \cite{Saccon2010}. However, gradient-based control systems based on $V_I$ are less frequent in the literature, as compared to those obtained from $U_A$, due to its non-differentiability for rotations of angle $180^{\mathrm{o}}$. In fact, it can be verified from  \eqref{VA} that
        \begin{equation}\label{dVA}
        \nabla\Phi_{ns}(\tilde R,1)=\frac{\nabla\Phi_s(\tilde R,1)}{\sqrt{1-\Phi_s(\tilde R,1)}},
        \end{equation}
        which is not defined for all $(\tilde R,1)$ satisfying $\Phi_s(\tilde R,1)=U_A(\tilde R)=1$. This corresponds to the set $ \mathcal{S}_{\pi,\max}\times\{1\}$, with
        \begin{align}
        \mathcal{S}_{\pi,\max}:=\{\tilde R\in SO(3)\mid~&\tilde R=\mathcal{R}_Q(0,v),\nonumber\\
        \label{S_pi_max}&v\in\mathcal{E}(A),\;\bar Av=\lambda_{\max}^{\bar A}v\},
        \end{align}
        containing the singular points of $\Phi_{ns}(\tilde{R}, 1)$. Consequently, $\Phi_{ns}(\tilde{R}, 1)\in\mathcal{P}_{\mathcal{D}_{ns}}(\mathcal{A})$, with $\mathcal{D}_{ns}=\big(SO(3)\setminus \mathcal{S}_{\pi, \max}\big)\times \{1\}$, and, in addition, the critical points of $\Phi_{ns}(\tilde R,1)$ are contained in $\big(\mathcal{S}_{\pi}\setminus\mathcal{S}_{\pi,\max}\big)\times\{1\}$, with the set $\mathcal{S}_{\pi}$ given in \eqref{S_pi}. Note that $\mathcal{S}_{\pi,\max}\subseteq\mathcal{S}_{\pi}$; In particular, $\mathcal{S}_{\pi,\max} = \mathcal{S}_{\pi}$ for $A=I$. It can be verified then that $\Phi_{ns}(\tilde{R}, 1)$ cannot satisfy \eqref{ineq::DPhi} in Theorem~\ref{theorem1} for all $(\tilde{R}, 1)\in\mathcal{F}$, where $\mathcal{F} = SO(3)\times \{1\}$ in this case.

        Even though the functions $\Phi_s(\tilde{R}, 1)$ and $\Phi_{ns}(\tilde{R}, 1)$ do not satisfy the conditions of Theorem~\ref{theorem1}, they can both be used in the design of appropriate potential functions satisfying our requirements through an adequate transformation described in the following.
\vspace{0.05 in}
\subsubsection{Angular warping}\label{sub:sub:subsection:warping}
        To construct potential functions satisfying the conditions of Theorem \ref{theorem1}, we introduce the following \textit{angular warping} transformation \cite{berkane2015construction}
        \begin{align}\label{Gamma}
        \Gamma_A(\tilde R,q)&=\tilde R\mathfrak{R}_A(\tilde R,q)\\\label{Rq}
        \mathfrak{R}_A(\tilde R,q)&=\mathcal{R}_a\left(2\sin^{-1}(kU_A(\tilde R)),\nu(q)\right),
        \end{align}
        where $U_A$ is defined as in \eqref{UA} for some matrix $A=A^\top$ such that $\bar{A} =\frac{1}{2}\mathrm{tr}(A)I-A$ is positive definite, $q\in\mathcal{Q}$ for some index set $\mathcal{Q}\subset \mathbb{N}$, the map $\nu(q): \mathcal{Q}\to\mathbb{S}^2$ to be determined, and the scalar $k$ satisfies
        \begin{equation}\label{k_max}
        0<k<\bar k:=\frac{1}{\sqrt{6-\max\{1,4\xi^2\}}},\qquad \xi:=\frac{\lambda_{\min}^{\bar A}}{\lambda_{\max}^{\bar A}}.
        \end{equation}
        The transformation $\Gamma_A(\tilde R,q)$ can be regarded as a perturbation of $\tilde R$ about the unit vector $\nu(q)$ by an angle $2\sin^{-1}(kU_A(\tilde R))$. The above condition on the scalar $k$ guarantees that the map $\tilde R\to\Gamma_A(\tilde R,q)$ is everywhere a local diffeomorphism \cite{berkane2015construction}. We recall the following Lemma which can be derived from \cite[Lemma 1 \& 3]{berkane2015construction}. 
\begin{lemma}
For any $X\in SO(3)$ and $\omega\in\mathbb{R}^3$ satisfying $\dot X=X[\omega]_\times$, one has
        \begin{equation}\label{dG}
        \frac{d}{dt}\Gamma_A(X,q)=\Gamma_A(X,q)\left[\Theta_A(X,q)\omega\right]_\times,
        \end{equation}
        where the matrix $\Theta_A(X,q)$ is full rank and is given by
        \begin{align}\label{Theta}
        \Theta_A(X,q)&=\mathfrak{R}_A(X,q)^\top+\frac{4k\nu(q)\psi(AX)^\top}{4\lambda_{\max}^{\bar A}\sqrt{1-k^2U_A^2(X)}}.
        \end{align}
\end{lemma}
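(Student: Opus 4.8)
\emph{Proof proposal.} The plan is to differentiate the defining identity $\Gamma_A(X,q)=X\,\mathfrak{R}_A(X,q)$ in \eqref{Gamma}–\eqref{Rq} directly, treating $q$ (hence the axis $\nu(q)$) as a constant parameter, which is legitimate because along the flows of the hybrid observer $q$ does not change. By the product rule, $\frac{d}{dt}\Gamma_A(X,q)=\dot X\,\mathfrak{R}_A(X,q)+X\,\dot{\mathfrak{R}}_A(X,q)=X[\omega]_\times\mathfrak{R}_A(X,q)+X\,\dot{\mathfrak{R}}_A(X,q)$, so the whole computation reduces to evaluating $\dot{\mathfrak{R}}_A(X,q)$.

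First I would differentiate $\mathfrak{R}_A(X,q)=\mathcal{R}_a(\theta(t),\nu(q))$ with $\theta(t):=2\sin^{-1}\!\big(kU_A(X(t))\big)$. Writing out Rodrigues' formula \eqref{Rodrigues} and using $[\nu(q)]_\times^3=-[\nu(q)]_\times$ (a consequence of \eqref{id::1} together with $\|\nu(q)\|=1$), one checks the elementary identity $\frac{\partial}{\partial\theta}\mathcal{R}_a(\theta,\nu(q))=\mathcal{R}_a(\theta,\nu(q))[\nu(q)]_\times$, whence $\dot{\mathfrak{R}}_A(X,q)=\dot\theta\,\mathfrak{R}_A(X,q)[\nu(q)]_\times$. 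Next, the chain rule gives $\dot\theta=\frac{2k\dot U_A}{\sqrt{1-k^2U_A^2}}$, and since $U_A(X)=\mathrm{tr}\big(A(I-X)\big)/4\lambda_{\max}^{\bar A}$ (see \eqref{UA}), the derivative formula \eqref{eq::dtr} of Lemma~\ref{lemma::derivatives} yields $\dot U_A=\psi(AX)^\top\omega/2\lambda_{\max}^{\bar A}$, so that $\dot\theta=\frac{4k\,\psi(AX)^\top\omega}{4\lambda_{\max}^{\bar A}\sqrt{1-k^2U_A^2(X)}}$; the square root is well defined since $kU_A(X)\le k<\bar k<1$ by \eqref{k_max}. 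Substituting back and factoring $X\,\mathfrak{R}_A(X,q)=\Gamma_A(X,q)$ out to the left, I would apply the conjugation identity $P^\top[\omega]_\times P=[P^\top\omega]_\times$ with $P=\mathfrak{R}_A(X,q)\in SO(3)$ to the first term and, on the second term, absorb the scalar $\psi(AX)^\top\omega$ and use linearity of $[\cdot]_\times$; this merges the two pieces into $\Gamma_A(X,q)\big[\Theta_A(X,q)\omega\big]_\times$ with $\Theta_A$ exactly as in \eqref{Theta}, proving \eqref{dG}.

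It remains to establish that $\Theta_A(X,q)$ is full rank. Since $\mathfrak{R}_A(X,q)$ is a rotation about $\nu(q)$, we have $\mathfrak{R}_A(X,q)\nu(q)=\nu(q)$, so $\Theta_A(X,q)$ is a rank-one perturbation of the rotation $\mathfrak{R}_A(X,q)^\top$ and the matrix determinant lemma gives $\det\Theta_A(X,q)=1+\frac{4k}{4\lambda_{\max}^{\bar A}\sqrt{1-k^2U_A^2}}\,\nu(q)^\top\psi(AX)$. Hence it is enough to prove $k\|\psi(AX)\|<\lambda_{\max}^{\bar A}\sqrt{1-k^2U_A^2(X)}$. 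Passing to the unit quaternion $(\eta,\epsilon)$ of $X$, the identities \eqref{eq::Q::tr}–\eqref{eq::Q::psi} of Lemma~\ref{lemma::identities_Q} give $U_A=\epsilon^\top\bar A\epsilon/\lambda_{\max}^{\bar A}$, and expanding $\|\psi(AX)\|^2=4\|(\eta I-[\epsilon]_\times)\bar A\epsilon\|^2$ via \eqref{id::1} and $\eta^2+\|\epsilon\|^2=1$ yields $\|\psi(AX)\|^2=4\big(\|\bar A\epsilon\|^2-(\epsilon^\top\bar A\epsilon)^2\big)$. The desired inequality then collapses to $k^2\big(4\|\bar A\epsilon\|^2-3(\epsilon^\top\bar A\epsilon)^2\big)<(\lambda_{\max}^{\bar A})^2$; maximizing the left-hand side over $\|\epsilon\|\in[0,1]$ using $\lambda_{\min}^{\bar A}\|\epsilon\|^2\le\epsilon^\top\bar A\epsilon$ and $\|\bar A\epsilon\|\le\lambda_{\max}^{\bar A}\|\epsilon\|$ shows it is implied by $k<\bar k$ with $\bar k$ as in \eqref{k_max}; in fact one obtains $\det\Theta_A(X,q)>0$.

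I expect the identity \eqref{dG} itself to be essentially routine once the two standard facts ($\partial_\theta\mathcal{R}_a=\mathcal{R}_a[\nu]_\times$ and $P^\top[\omega]_\times P=[P^\top\omega]_\times$) and the derivative formula of Lemma~\ref{lemma::derivatives} are in hand; the genuine obstacle is the full-rank claim, namely pinning down the quantitative bound on $\|\psi(AX)\|$ and checking that the precise threshold $\bar k$ of \eqref{k_max} is what keeps the rank-one correction harmless. Since this last step is exactly the content of \cite[Lemmas 1 \& 3]{berkane2015construction}, one may either reproduce the short scalar optimization sketched above or simply invoke it.
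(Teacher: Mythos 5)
Your proposal is correct, but it is worth noting that the paper itself does not prove this lemma at all: it simply recalls it as a consequence of \cite[Lemmas 1 \& 3]{berkane2015construction}. Your argument is a self-contained reconstruction, and the steps check out. Differentiating $\Gamma_A(X,q)=X\mathfrak{R}_A(X,q)$ with $q$ frozen, using $\partial_\theta\mathcal{R}_a(\theta,\nu)=\mathcal{R}_a(\theta,\nu)[\nu]_\times$, $\dot\theta=2k\dot U_A/\sqrt{1-k^2U_A^2}$ and $\dot U_A=\psi(AX)^\top\omega/2\lambda_{\max}^{\bar A}$ from \eqref{eq::dtr}, and then the conjugation identity $P^\top[\omega]_\times P=[P^\top\omega]_\times$, does yield exactly $\Gamma_A(X,q)\big[\big(\mathfrak{R}_A^\top+\tfrac{k\nu\psi(AX)^\top}{\lambda_{\max}^{\bar A}\sqrt{1-k^2U_A^2}}\big)\omega\big]_\times$, which is \eqref{Theta}. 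For the full-rank part, your use of the matrix determinant lemma together with $\mathfrak{R}_A(X,q)\nu(q)=\nu(q)$ correctly gives $\det\Theta_A=1+\tfrac{k\,\nu(q)^\top\psi(AX)}{\lambda_{\max}^{\bar A}\sqrt{1-k^2U_A^2}}$, and your quaternion identity $\|\psi(AX)\|^2=4\big(\|\bar A\epsilon\|^2-(\epsilon^\top\bar A\epsilon)^2\big)$ is consistent with \eqref{eq::psi_norm}; the reduced condition $k^2\big(4\|\bar A\epsilon\|^2-3(\epsilon^\top\bar A\epsilon)^2\big)<(\lambda_{\max}^{\bar A})^2$ is indeed implied by \eqref{k_max} (bounding the left side by $(\lambda_{\max}^{\bar A})^2(4s-3\xi^2 s^2)$, $s=\|\epsilon\|^2\in[0,1]$, whose maximum is $4-3\xi^2$ when $\xi^2<2/3$ and $4/(3\xi^2)$ otherwise, and in both cases $\bar k^2=1/(6-\max\{1,4\xi^2\})$ is below the admissible threshold). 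One small caveat: your optimization shows that invertibility of $\Theta_A$ actually holds for $k$ somewhat larger than $\bar k$; the threshold $\bar k$ in \eqref{k_max} is the one used in \cite{berkane2015construction} (where it also serves the local-diffeomorphism property), so your statement that $\bar k$ is ``the precise threshold'' for keeping the rank-one correction harmless should be softened to ``a sufficient one,'' but this does not affect the validity of the proof.
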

Moreover, some useful properties of the transformation $\Gamma_A$ are given in the following lemma proved in Appendix \ref{proof:lemma:conidition1}.
  \begin{lemma}\label{lemma::condition1}
        Consider the transformation $\Gamma_A$ in \eqref{Gamma}-\eqref{k_max}.
        Then,
        \begin{equation}\label{upper:lower:bounds:Gamma}
        \underline{\gamma}|\tilde R|_I^2\leq|\Gamma_A(\tilde R,q)|_I^2\leq \overline{\gamma}|\tilde R|_I^2,
        \end{equation}
        for all $(\tilde R,q)\in SO(3)\times\mathcal{Q}$
        with $\bar\gamma,\underline{\gamma}>0$ given by  
        \begin{equation}\label{bounds:Gamma}
        \underline{\gamma}:= 1-k^2-k\sqrt{1-k^2}, \qquad \overline{\gamma}:= 1+k+\frac{k^2}{4}.
        \end{equation}
       
        \end{lemma}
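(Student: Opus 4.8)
The plan is to reduce everything to a bound on $|\Gamma_A(\tilde R,q)|_I^2$ in terms of $|\tilde R|_I^2$ by exploiting the definition $\Gamma_A(\tilde R,q)=\tilde R\,\mathfrak{R}_A(\tilde R,q)$, the formula \eqref{id::normX} for the normalized distance, and the angle-axis form \eqref{Rodrigues} of $\mathfrak{R}_A(\tilde R,q)=\mathcal{R}_a(2\sin^{-1}(kU_A(\tilde R)),\nu(q))$. First I would compute $|\Gamma_A|_I^2=\tfrac14\mathrm{tr}(I-\tilde R\mathfrak{R}_A)$ directly, writing $\mathrm{tr}(I-\tilde R\mathfrak{R}_A)=\mathrm{tr}(I-\tilde R)+\mathrm{tr}(\tilde R(I-\mathfrak{R}_A))=4|\tilde R|_I^2+\mathrm{tr}(\tilde R(I-\mathfrak{R}_A))$. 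The angle of $\mathfrak{R}_A$ is $\theta_A:=2\sin^{-1}(kU_A(\tilde R))$, so using \eqref{Rodrigues} one has $I-\mathfrak{R}_A=-\sin\theta_A[\nu(q)]_\times-(1-\cos\theta_A)[\nu(q)]_\times^2$, and $\mathrm{tr}(\tilde R(I-\mathfrak{R}_A))$ splits into a term linear in $\sin\theta_A$ (involving $\mathrm{tr}(\tilde R[\nu]_\times)$, which is bounded via $\psi$ and identity \eqref{id::4}) and a term linear in $1-\cos\theta_A$ (involving $\mathrm{tr}(\tilde R[\nu]_\times^2)$, bounded via \eqref{id::1}). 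The key quantitative inputs are the elementary bounds $|\sin\theta_A|\le 2kU_A(\tilde R)\le 2k|\tilde R|_I^2$ (from $|\sin(2\sin^{-1}x)|\le 2|x|$ and \eqref{ineq::UA}) and $1-\cos\theta_A=2\sin^2(\sin^{-1}(kU_A))\,? $—more carefully, $1-\cos\theta_A \le \tfrac12\sin^2\theta_A \cdot(\text{const})$ is too crude; instead I would use $1-\cos\theta_A = 2k^2U_A^2(\tilde R)$ exactly? no: with $\theta_A=2\phi$, $\sin\phi=kU_A$, one gets $\cos\theta_A=1-2\sin^2\phi=1-2k^2U_A^2$, hence $1-\cos\theta_A=2k^2U_A^2(\tilde R)\le 2k^2|\tilde R|_I^4\le 2k^2|\tilde R|_I^2$, and $\sin\theta_A=2\sin\phi\cos\phi=2kU_A\sqrt{1-k^2U_A^2}$, so $|\sin\theta_A|\le 2kU_A(\tilde R)\sqrt{1-k^2U_A^2(\tilde R)}$.

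Next I would assemble the estimate. Writing $s:=\mathrm{tr}(\tilde R[\nu]_\times)$ and $c:=\mathrm{tr}(\tilde R[\nu]_\times^2)$, one has $|\Gamma_A|_I^2=|\tilde R|_I^2-\tfrac14\sin\theta_A\, s-\tfrac14(1-\cos\theta_A)\,c$. Using \eqref{id::4}, \eqref{psi} and \eqref{id::5}, $|s|=|\mathrm{tr}(\tilde R[\nu]_\times)|=2|\psi(\tilde R)^\top\nu|\le 2\|\psi(\tilde R)\|$, and by \eqref{ineq::psi} (established in the proof of Theorem~\ref{theorem1}) $\|\psi(\tilde R)\|=2|\tilde R|_I\sqrt{1-|\tilde R|_I^2}$, so $|s|\le 4|\tilde R|_I\sqrt{1-|\tilde R|_I^2}$. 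Similarly, using \eqref{id::1}, $[\nu]_\times^2=-I+\nu\nu^\top$, so $c=\mathrm{tr}(\tilde R(\nu\nu^\top-I))=\nu^\top\tilde R\nu-\mathrm{tr}(\tilde R)$; since $\mathrm{tr}(\tilde R)=3-4|\tilde R|_I^2$ and $\nu^\top\tilde R\nu\in[-1,1]$, we get $-4+4|\tilde R|_I^2\le c\le 4|\tilde R|_I^2$, in particular $|c|\le 4$ but more usefully $-4(1-|\tilde R|_I^2)\le c\le 4|\tilde R|_I^2$. Combining with $|\sin\theta_A|\le 2k|\tilde R|_I^2\le 2k|\tilde R|_I$ (using $|\tilde R|_I\le1$) when multiplied by the $|s|$ bound gives a $|\sin\theta_A||s|/4 \le 2k|\tilde R|_I^2\cdot|\tilde R|_I\sqrt{1-|\tilde R|_I^2}$-type term; tracking the worst signs and using $|\tilde R|_I\sqrt{1-|\tilde R|_I^2}\le \tfrac12$ and the exact expressions for $\sin\theta_A,1-\cos\theta_A$, I expect the bound $|\Gamma_A|_I^2 \le |\tilde R|_I^2 + k|\tilde R|_I^2 + \tfrac{k^2}{4}|\tilde R|_I^2 \cdot(\text{something})$, matching $\overline\gamma=1+k+\tfrac{k^2}{4}$; and for the lower bound, the two correction terms contribute at worst $-k^2|\tilde R|_I^2 - k\sqrt{1-k^2}\,|\tilde R|_I^2$ after using $U_A\le|\tilde R|_I^2$ and $\sqrt{1-k^2U_A^2}\le 1$ together with $\sqrt{1-|\tilde R|_I^2}\le 1$, yielding $\underline\gamma = 1-k^2-k\sqrt{1-k^2}$. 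I would also need $\underline\gamma>0$, which follows from $k<\bar k\le 1/\sqrt5 < (\sqrt5-1)/2$, the latter being exactly the root of $1-k^2-k=0$ and an even weaker requirement than $1-k^2-k\sqrt{1-k^2}>0$.

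The main obstacle is bookkeeping the signs carefully enough to get clean constants: $s$ and $c$ can be positive or negative, $\sin\theta_A$ has the sign of $U_A\ge0$ (so it is nonnegative), and one must decide whether $\nu(q)$ aligns with or against $\psi(\tilde R)$ in the worst case. I would handle this by bounding $|\sin\theta_A\,s|$ and $|(1-\cos\theta_A)c|$ by their absolute values for the upper bound, but for the lower bound it is cleaner to keep $1-\cos\theta_A=2k^2U_A^2\ge0$ paired with the sharp lower estimate $c\ge -4(1-|\tilde R|_I^2)$ and note $U_A^2(1-|\tilde R|_I^2)\le |\tilde R|_I^2$. A secondary subtlety is that the claimed bounds must hold \emph{uniformly in $q$}, but since $\nu(q)\in\mathbb{S}^2$ all estimates above only used $\|\nu(q)\|=1$, so uniformity is automatic. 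The rest is routine algebra invoking the identities \eqref{id::1}, \eqref{id::4}, \eqref{id::5}, \eqref{psi}, \eqref{ineq::UA} and \eqref{ineq::psi}.
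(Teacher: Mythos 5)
Your decomposition $|\Gamma_A(\tilde R,q)|_I^2=|\tilde R|_I^2-\tfrac14\sin\theta_A\,s-\tfrac14(1-\cos\theta_A)\,c$ with $\theta_A=2\sin^{-1}(kU_A)$, $\sin\theta_A=2kU_A\sqrt{1-k^2U_A^2}$, $1-\cos\theta_A=2k^2U_A^2$ is correct and is in fact algebraically the same object the paper computes, only in rotation-matrix rather than quaternion coordinates (the paper writes $\epsilon_q=k\eta U_A\nu(q)+\sqrt{1-k^2U_A^2}\,\epsilon+kU_A[\epsilon]_\times\nu(q)$ via \eqref{Qmultiply}--\eqref{R1R2} and bounds $\|\epsilon_q\|^2=|\Gamma_A(\tilde R,q)|_I^2$ directly). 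The gap is quantitative: the inequalities you commit to do not produce the constants stated in the lemma, and those constants are part of the statement (they are reused as $\alpha_1,\alpha_2$ in Proposition~\ref{proposition::condition2}). For the $c$-term you use $\nu^\top\tilde R\nu\in[-1,1]$, i.e. $c\geq-4(1-|\tilde R|_I^2)$ and $c\leq 4|\tilde R|_I^2$; with $U_A\leq|\tilde R|_I^2$ and $x(1-x)\leq1/4$ this gives at best a $\tfrac{k^2}{2}|\tilde R|_I^2$ upward correction (so $\overline\gamma=1+k+k^2/2$, not $1+k+k^2/4$) and a $-2k^2|\tilde R|_I^2$ downward correction, not the claimed $-k^2|\tilde R|_I^2$. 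To recover the stated constants you need the sharp fact that a rotation of angle $\theta$ satisfies $\cos\theta\leq\nu^\top\tilde R\nu\leq1$, equivalently $c=4|\tilde R|_I^2-2-2\|\epsilon\times\nu(q)\|^2\in[-2(1-|\tilde R|_I^2),\,4|\tilde R|_I^2-2]$; this is exactly the term $k^2U_A^2\big(1-2\|\epsilon\|^2+\|\epsilon\|^2\sin^2\varphi_q\big)$ that the paper keeps exactly rather than bounding crudely.

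For the lower bound there is a second, independent problem: you bound $\sqrt{1-k^2U_A^2}\leq1$, which turns the $s$-term contribution into $-k|\tilde R|_I^2$; since $k>k\sqrt{1-k^2}$, the inequalities you list cannot yield $\underline\gamma=1-k^2-k\sqrt{1-k^2}$, only something like $1-2k^2-k$. The missing idea is to keep the factor $\sqrt{1-k^2U_A^2}$ (using $U_A\leq|\tilde R|_I^2=\|\epsilon\|^2$ and $2|\eta|\,\|\epsilon\|\leq1$ to get a bound of the form $|\tilde R|_I^2\big[1-k^2U_A^2-kU_A\sqrt{1-k^2U_A^2}\big]$) and then minimize over $kU_A\in[0,k]$ using that $x\mapsto1-x^2-x\sqrt{1-x^2}$ is decreasing on $[0,1/\sqrt2]$, which is legitimate because \eqref{k_max} gives $k<\bar k\leq1/\sqrt2$ --- this monotonicity step is precisely how the paper lands on $\underline\gamma$. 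Relatedly, your positivity argument contains a slip: $\bar k=1/\sqrt{6-\max\{1,4\xi^2\}}$ lies in $[1/\sqrt5,1/\sqrt2]$, so the chain $k<\bar k\leq1/\sqrt5<(\sqrt5-1)/2$ is backwards, and bounding $k\sqrt{1-k^2}$ by $k$ would not cover $k$ up to $1/\sqrt2$; the correct observation is simply that $1-k^2-k\sqrt{1-k^2}>0$ iff $\sqrt{1-k^2}>k$ iff $k<1/\sqrt2$. With these repairs your trace-based route does reproduce the lemma, but as written it proves only weaker constants while asserting the stated ones.
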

        It should be noted that $\underline{\gamma}$ in \eqref{upper:lower:bounds:Gamma}-\eqref{bounds:Gamma} is strictly positive under condition \eqref{k_max}. Inequality \eqref{upper:lower:bounds:Gamma} shows that the transformation $\Gamma_A(\tilde R,q)$ acts
on the attitude distance on $SO(3)$, namely the norm $|\tilde{R}|_{I}$, in a way such that the new attitude $\Gamma_{A}(\tilde{R},q)$ has a distance, namely $|\Gamma_{A}(\tilde{R},q)|_{I}$, which is bounded from below and above by a term proportional to the original attitude distance $|\tilde{R}|_{I}$. 

        We show in the following that the potential functions $U_A\big(\Gamma_A(\tilde{R}, q)\big)$ and $V_A\big(\Gamma_A(\tilde{R}, q)\big)$ satisfy the conditions of Theorem \ref{theorem1} through an appropriate choice of matrix $A$, the index set $\mathcal{Q}$, the map $\nu$, and the hysteresis gap $\delta$ in \eqref{q}-\eqref{J}.
\subsubsection{Composite potential functions}
        Define
        \begin{equation}\label{Phi_UA_VA}\begin{array}{lcl}
        \Phi_{U_A}(\tilde{R}, q) &=& U_A \circ \Gamma_A(\tilde{R}, q),\\
         \Phi_{V_A}(\tilde{R}, q) &=& V_A \circ \Gamma_A(\tilde{R}, q),\end{array}
        \end{equation}
        where $U_A$ and $V_A$ are given in \eqref{UA} and \eqref{VA}, respectively, and $\Gamma_A$ is defined in Section~\ref{sub:sub:subsection:warping}. The main motivation behind introducing the modified functions in \eqref{Phi_UA_VA} is to avoid the critical/singular points of the functions $U_A$ and $V_A$ using the transformation $\Gamma_A$ via an appropriate design of the switching mechanism \eqref{q}-\eqref{J}. In fact, the transformation $\Gamma_A(\tilde R,q)$ allows to stretch and compress the manifold $SO(3)$ by moving all the points (except the identity rotation $I$) to different locations; In particular, for each index $q\in\mathcal{Q}$, the transformation  $\Gamma_A(\tilde R,q)$ allows to re-locate all the points of the set $\mathcal{S}_\pi$ given in \eqref{S_pi}.

        Consider the following possible designs of the parameters in \eqref{q}-\eqref{J} and \eqref{Gamma}-\eqref{Rq}:
\begin{itemize}
        \item [D1.] $\Phi=\Phi_{U_A}$, $A=I$, $\mathcal{Q}\in\{1,\cdots,6\}$, $\nu(p)=e_p,\;p\in\{1,2,3\},\;\nu(p+3)=-\nu(p)$ and $0<\delta<\Delta_{I}(k):=(-1+\sqrt{1+4k^2})^{3}/24k^4$; where $\{e_1,e_2,e_3\}$ is any orthonormal basis on $\mathbb{R}^3$ and the scalar $k$ satisfies \eqref{k_max}.
        \item [D2.] $\Phi=\Phi_{V_A}$, $A=I$, $\mathcal{Q}$ and $\nu(\cdot)$ are defined as in D1, and the hysteresis gap $\delta$ satisfies $0<\delta<\Delta_{II}(k):=2\sqrt{\Delta_{I}(k)}$, with $k$ satisfying \eqref{k_max}.
        \item [D3.] $\Phi=\Phi_{U_A}$, $A$ is positive definite with the distinct eigenvalues $0<\lambda_1<\lambda_2<\lambda_3$, $\mathcal{Q}\in\{1,2\}$, $\nu(1)=u,\;\nu(2)=-u$, where
            the vector $u\in\mathbb{S}^2$ satisfies:
                \begin{equation}\label{choice_u_1}u^\top v_{1}=0,\qquad (u^\top v_{i})^2=\lambda^{A}_i/(\lambda^{A}_2+\lambda^{A}_3),\end{equation}
                for $i\in\{1,2\}$ if $ \lambda_2^{A}\lambda_3^{A}-\lambda_1^{A} \lambda_2^{A}-\lambda_1^{A}\lambda_3^{A}\geq 0$, or
                \begin{equation}\label{choice_u_2}(u^\top v_{i})^2=1-\frac{4\prod_{j\neq i}\lambda_j^{A}}{\sum_\ell\sum_{k\neq \ell}\lambda_\ell^{A}\lambda_k^{A}},\end{equation}
            otherwise, for $i\in\{1,2,3\}$ and $v_i$ being the eigenvector of $A$ corresponding to the eigenvalue $\lambda_i^A$. The hysteresis gap $\delta$ satisfies $0<\delta<\Delta_{III}(k):=4k^2\bar V^2(1-k^2\bar V^2)\Lambda$   with $k$ selected as in \eqref{k_max}, $\bar V=[-1+\sqrt{1+4k^2\xi\Lambda}]/2k^2\Lambda$, and
            \begin{equation}\nonumber\Lambda :=\left\{ \begin{array}{ll} \lambda_1^A/(\lambda_2^A+\lambda_3^A) & \mbox{if}~
                    \lambda_2^{A}\lambda_3^{A}-\lambda_1^{A} \lambda_2^{A}-\lambda_1^{A}\lambda_3^{A}\geq 0,\\
                    \frac{4\prod_j\lambda_j^{A}}{(\lambda_2^A+\lambda_3^A)\sum_\ell\sum_{k\neq \ell}\lambda_\ell^{A}\lambda_k^{A}} &\mbox{otherwise,}
                    \end{array}\right.
               \end{equation}
            \item [D4.] $\Phi=\Phi_{V_A}$, $A$, $\mathcal{Q}$, and $\nu(\cdot)$ are given as in D3, and the hysteresis gap satisfies $0<\delta<\Delta_{IV}(A,k):=2[-\sqrt{1-\xi}+\sqrt{1-\xi+\Delta_{III}(A,k)}]$, with $k$ and $\xi$ given in \eqref{k_max}.
\end{itemize}
        Making use of one of the designs above, the following result, proved in Appendix~\ref{proof::lemma::synergism}, can be deduced.
        \begin{lemma}\label{lemma::synergism}
        Consider the functions $\Phi_{U_A}$ and $\Phi_{V_A}$ in \eqref{Phi_UA_VA} with the transformation $\Gamma_A(\tilde{R}, q)$ being defined in \eqref{Gamma}-\eqref{k_max} and the discrete variable $q$ satisfying \eqref{q}-\eqref{J}. Suppose that $\Phi$, $\mathcal{Q}$, $\delta$  in \eqref{q}-\eqref{J} as well as matrix $A$ and the map $\nu$ in \eqref{Gamma}-\eqref{Rq} are selected according to one of the designs D1 -- D4. Then,
        for all $(\tilde R,q)\in\mathcal{F}$, one has $\Gamma_A(\tilde R,q)\notin \mathcal{S}_{\pi}$ where $\mathcal{F}$ is given in \eqref{F} and $\mathcal{S}_{\pi}$ is defined in \eqref{S_pi}. Moreover, for each of the designs in D1 -- D4, we have $\Phi\in\mathcal{P}_{\mathcal{D}}$ and $\mathcal{F}\subseteq \mathcal{D}$, with $\mathcal{D}$ being defined in the proof below.
        \end{lemma}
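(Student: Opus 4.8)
The plan is to verify, for each design D1--D4 separately, the two claims: (i) whenever $(\tilde R,q)\in\mathcal F$, the warped attitude $\Gamma_A(\tilde R,q)$ avoids the antipodal set $\mathcal S_\pi$; and (ii) $\Phi\in\mathcal P_{\mathcal D}$ with $\mathcal F\subseteq\mathcal D$, where $\mathcal D$ will be defined as the set on which $R\mapsto\Phi(R,q)$ is differentiable (for $\Phi_{U_A}$ this is all of $SO(3)\times\mathcal Q$ since $U_A$ is smooth and $\Gamma_A$ is a local diffeomorphism; for $\Phi_{V_A}$ one must excise the preimage under $\Gamma_A$ of $\mathcal S_{\pi,\max}$). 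The structural observation driving everything is that the synergistic mechanism \eqref{q}--\eqref{J} forces $\Phi(\tilde R,q)\le\min_p\Phi(\tilde R,p)+\delta$ on $\mathcal F$; so if one can show that at \emph{every} point of $\mathcal S_\pi$ there is at least one index $p$ with $\Phi(\tilde R,p)$ bounded above by a quantity strictly smaller than $\Phi(\tilde R,q)$ for any $q$ that could make $\Gamma_A(\tilde R,q)\in\mathcal S_\pi$, the hysteresis gap $\delta$ chosen below the resulting margin excludes such $(\tilde R,q)$ from $\mathcal F$. Concretely: if $\Gamma_A(\tilde R,q)\in\mathcal S_\pi$ then, since $\Gamma_A$ fixes the axis/rotation structure in a controlled way, $\tilde R$ itself must be a rotation by $\pi$ about an eigenvector $v$ of $A$, whence $U_A(\tilde R)$ takes one of finitely many values; evaluating $\Phi(\tilde R,p)$ over $p\in\mathcal Q$ and using that $\nu$ has been chosen (via \eqref{choice_u_1}--\eqref{choice_u_2} or the $\pm e_i$ construction) so that some index produces a strictly smaller potential value yields the gap $\Delta_\bullet(k)$ appearing in each design.

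The key computational steps, carried out design by design, are as follows. For D1/D2 ($A=I$), on $\mathcal S_\pi=\mathcal S_{\pi,\max}$ we have $U_I(\tilde R)=1$ and $\tilde R=\mathcal R_Q(0,v)$ with $v\in\mathbb S^2$; here $\psi(\tilde R)=0$, so $\Gamma_I(\tilde R,q)=\tilde R\,\mathcal R_a(2\sin^{-1}(k),\nu(q))$, and one computes $|\Gamma_I(\tilde R,q)|_I^2$ as a function of $v^\top\nu(q)$. Choosing $\nu$ to range over $\pm e_1,\pm e_2,\pm e_3$ guarantees that for every $v$ there is an index with $|v^\top\nu(q)|^2\ge 1/3$, hence $U_I\circ\Gamma_I$ at that index is at most some explicit value; taking $\delta<\Delta_I(k)=(-1+\sqrt{1+4k^2})^3/24k^4$ (resp. $\delta<2\sqrt{\Delta_I(k)}$ for $V_I$, using the monotone relation \eqref{ineq::VA} between $U$ and $V$) keeps $\mathcal F$ off $\Gamma_A^{-1}(\mathcal S_\pi)$. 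For D3/D4 ($A$ with distinct eigenvalues), the set $\mathcal S_\pi$ consists of the three $\pi$-rotations about $v_1,v_2,v_3$; using Lemma~\ref{lemma::identities_Q}--\ref{lemma::identities_SO(3)} one evaluates $U_A\circ\Gamma_A$ at $(\tilde R,\pm u)$ for these three attitudes, and the defining conditions \eqref{choice_u_1}--\eqref{choice_u_2} on $u$ are exactly what balances these values so that the minimum over $q\in\{1,2\}$ is uniformly bounded below the value $\Phi$ would need to have for $\Gamma_A(\tilde R,q)\in\mathcal S_\pi$; the bound $\Delta_{III}(k)=4k^2\bar V^2(1-k^2\bar V^2)\Lambda$ is the resulting margin, with $\bar V$ the relevant root, and $\Delta_{IV}$ follows by the same $U\!\leftrightarrow\!V$ substitution. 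Once $\Gamma_A(\tilde R,q)\notin\mathcal S_\pi$ on $\mathcal F$, claim (ii) follows: for $\Phi_{U_A}$ differentiability is automatic; for $\Phi_{V_A}$, $\nabla V_A$ blows up only on $\mathcal S_{\pi,\max}\subseteq\mathcal S_\pi$, so $\mathcal F\cap\Gamma_A^{-1}(\mathcal S_{\pi,\max})=\emptyset$ gives $\mathcal F\subseteq\mathcal D:=\big(SO(3)\times\mathcal Q\big)\setminus\Gamma_A^{-1}(\mathcal S_{\pi,\max}\times\mathcal Q)$, and the chain rule (with $\Theta_A$ full rank) plus positivity of $U_A,V_A$ away from $I$ establishes $\Phi\in\mathcal P_{\mathcal D}$.

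The main obstacle I expect is the D3/D4 case: verifying that the somewhat opaque algebraic conditions \eqref{choice_u_1}--\eqref{choice_u_2} on the axis $u$ genuinely guarantee that for \emph{every} one of the three antipodal attitudes $\mathcal R_Q(0,v_i)$, at least one of $q=1,2$ yields a strictly smaller potential value, and then tracking the worst case over $i$ to extract the explicit gap $\Delta_{III}(k)$. This requires a careful case split on the sign of $\lambda_2^A\lambda_3^A-\lambda_1^A\lambda_2^A-\lambda_1^A\lambda_3^A$ (which is precisely why \eqref{choice_u_1}--\eqref{choice_u_2} bifurcate), and a monotonicity argument in $U_A\circ\Gamma_A$ as a function of $k U_A(\tilde R)$ on $[0,\bar V]$. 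The D1/D2 case and the passage (ii) from the separation property to membership in $\mathcal P_{\mathcal D}$ are comparatively routine, relying only on the quaternion identities of Lemmas~\ref{lemma::identities_Q}--\ref{lemma::identities_SO(3)} and the local-diffeomorphism property from \eqref{k_max}.
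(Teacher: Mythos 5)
Your overall strategy (show that whenever $\Gamma_A(\tilde R,q)\in\mathcal S_\pi$ the synergy gap exceeds $\delta$, then handle differentiability of $\Phi_{V_A}$ by noting $\mathcal S_{\pi,\max}\subseteq\mathcal S_\pi$) is the right one and matches the paper, and your part (ii) is essentially the paper's argument. However, the concrete reduction on which all your case computations rest is false: from $\Gamma_A(\tilde R,q)\in\mathcal S_\pi$ it does \emph{not} follow that $\tilde R$ itself is a $\pi$-rotation about an eigenvector of $A$, nor that $U_A(\tilde R)$ takes finitely many values. The warping adds an angle $2\sin^{-1}(kU_A(\tilde R))$ about $\nu(q)$, so the set you must control is the preimage $\{\tilde R:\Gamma_A(\tilde R,q)\in\mathcal S_\pi\}$, which is a displaced copy of $\mathcal S_\pi$, not $\mathcal S_\pi$ itself. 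For a counterexample take $A=I$ and $\tilde R=\mathcal R_a(\theta,\nu(q))$ with $\theta<\pi$ chosen so that $\theta+2\sin^{-1}\big(k\sin^2(\theta/2)\big)=\pi$: then $\Gamma_A(\tilde R,q)\in\mathcal S_\pi$ while $\tilde R\notin\mathcal S_\pi$ and $U_I(\tilde R)<1$. Consequently your D1/D2 computation, which evaluates $U_I\circ\Gamma_I$ at points $\tilde R=\mathcal R_Q(0,v)$ with $\psi(\tilde R)=0$ and warping angle $2\sin^{-1}(k)$, analyzes the wrong set; the same misplacement occurs in your D3/D4 plan, where the inequality must be established at all $(\tilde R,q)$ with $\Gamma_A(\tilde R,q)=\mathcal R_Q(0,v)$, not at $\tilde R=\mathcal R_Q(0,v_i)$.

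The paper's proof avoids this by working directly with the quaternion $(\eta_q,\epsilon_q)$ of $\Gamma_A(\tilde R,q)$: the hypothesis $\Gamma_A(\tilde R,q)\in\mathcal S_\pi$ is encoded only as $\eta_q=0$, which combined with $\eta_q=\eta\sqrt{1-k^2U_A^2(\tilde R)}-kU_A(\tilde R)\epsilon^\top\nu(q)$ yields merely a \emph{lower bound} $\|\epsilon\|^2\geq(-1+\sqrt{1+4k^2})/2k^2$ on the vector part of $\tilde R$ (no claim that $\|\epsilon\|=1$). The gap is then obtained from $\eta_p=kU_A(\tilde R)\epsilon^\top(\nu(q)-\nu(p))$ together with the orthonormal-basis structure of $\nu$, giving $\max_{p}\eta_p^2\geq\Delta_I(k)$ and hence $U_A(\Gamma_A(\tilde R,q))-\min_p U_A(\Gamma_A(\tilde R,p))=\max_p\eta_p^2>\delta$; D2 and D4 follow by the square-root/completing-the-square manipulation you anticipated, and D3/D4 rest on the inequality \eqref{pf::case3} imported from \cite{berkane2015construction}, stated precisely on the preimage set. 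To repair your proposal you would need to redo the D1/D2 estimate on the preimage of $\mathcal S_\pi$ (i.e., under the constraint $\eta_q=0$ rather than $\tilde R\in\mathcal S_\pi$), and for D3/D4 either cite the prior construction or reprove its bound on that same preimage.
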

        Lemma~\ref{lemma::synergism} indicates that $\Gamma_A(\tilde R,q)\notin \mathcal{S}_{\pi}$ can be guaranteed for all $(\tilde{R}, q)\in\mathcal{F}$ if one considers one of the potential functions in \eqref{Phi_UA_VA} with an appropriate choice of the design parameters. This property is crucial in the design of hybrid observers ensuring global stability results.

        Now, we show that the presented potential functions in Lemma~\ref{lemma::synergism} satisfy \eqref{ineq::Phi}-\eqref{ineq::DPhi} in Theorem~\ref{theorem1}. For this, we need to compute the gradient of each potential function. For any $X\in SO(3)$, and $\omega\in\mathbb{R}^3$ satisfying $\dot X=X[\omega]_\times$, one can show using \eqref{id::4}-\eqref{id::5}, \eqref{dUA}, and \eqref{dG}, that
        \begin{align*}
            &\frac{d}{dt}\Phi_{U_A}(X,q)\\
            &=\lls\nabla U_A(\Gamma_A(X,q)),\dot{\Gamma}_A(X,q)\ggs\\
            &=\frac{1}{4\lambda_{\max}^{\bar A}}\lls\mathbb{P}_a(A\Gamma_A(X,q)),\left[\Theta_A(X,q)\omega\right]_\times\ggs\\
            &=\frac{1}{2\lambda_{\max}^{\bar A}}\psi(A\Gamma_A(X,q))^\top\Theta_A(X,q)\omega\\
            &=\frac{1}{4\lambda_{\max}^{\bar A}}\lls X[\Theta_A(X,q)^\top\psi(A\Gamma_A(X,q))]_\times,X[\omega]_\times\ggs .
            \end{align*}
            On the other hand, the gradient of $\Phi_{U_A}$ verifies
                \begin{align*}
                \frac{d}{dt}\Phi_{U_A}(X,q)&=\lls\nabla\Phi_{U_A}(X,q),\dot{X}\ggs.
                \end{align*}
            Consequently,
                \begin{equation}\label{DUAG}
                \nabla\Phi_{U_A}(X,q)=\frac{1}{4\lambda_{\max}^{\bar A}}X\left[\Theta_A(X,q)^\top\psi(A\Gamma_A(X,q))\right]_\times.
                \end{equation}
       In view of \eqref{VA} and \eqref{Phi_UA_VA}, and applying the chain rule, one has
                \begin{equation}\label{DVAG}
                \nabla\Phi_{V_A}(X,q)=\frac{\nabla\Phi_{U_A}(X,q)}{\sqrt{1-\Phi_{U_A}(X,q)}}.
                \end{equation}

        It should be noted that the expression of the gradient $ \nabla\Phi_{V_A}$ in \eqref{DVAG} is well defined on the flow set $\mathcal{F}$ due to the fact that $\Gamma_A(X,q)\notin\mathcal{S}_\pi$ for all $(X,q)\in\mathcal{F}$ (see Lemma~\ref{lemma::synergism}).
        \begin{prop}\label{proposition::condition2}
        Consider the functions $\Phi_{U_A}$ and $\Phi_{V_A}$ in \eqref{Phi_UA_VA} with the transformation $\Gamma_A(\tilde{R}, q)$ being defined in \eqref{Gamma}-\eqref{k_max} and the discrete variable $q$ satisfying \eqref{q}-\eqref{J}.
        Suppose that $\Phi$, $\mathcal{Q}$, $\delta$  in \eqref{q}-\eqref{J} as well as matrix $A$ and the map $\nu$ in \eqref{Gamma}-\eqref{Rq} are selected according to one of the designs D1 -- D4. Then the conditions \eqref{ineq::Phi}-\eqref{ineq::DPhi} in Theorem~\ref{theorem1} are satisfied for the potential function $\Phi_{U_A}$ with
                \begin{align*}
                &\alpha_1=\xi\underline{\gamma},\qquad \qquad \alpha_2=\overline{\gamma},\\
                &\alpha_3=\frac{\underline{\lambda}\xi^2\underline{\gamma}}{4}\min_{(\tilde R,q)\in\mathcal{F}}\left(1-|\Gamma_A(\tilde R,q)|_I^2\cos^2(\phi_q)\right), \quad
                \alpha_4=\frac{\overline{\lambda}\overline{\gamma}}{4},
                \end{align*}
                and for the potential function $\Phi_{V_A}$ with
                \begin{align*}
                &\alpha_1=\xi\underline{\gamma},\qquad \qquad \alpha_2=2\overline{\gamma},\\
                &\alpha_3=\frac{\underline{\lambda}\xi^2\underline{\gamma}}{4}\min_{(\tilde R,q)\in\mathcal{F}}\left(\frac{1-|\Gamma_A(\tilde R,q)|_I^2\cos^2(\phi_q)}{1-\Phi_{U_A}(\tilde R,q)}\right),\\
                &\alpha_4=\frac{\overline{\lambda}\overline{\gamma}}{4}\max_{(\tilde R,q)\in\mathcal{F}}\left(\frac{1-|\Gamma_A(\tilde R,q)|_I^2\cos^2(\phi_q)}{1-\Phi_{U_A}(\tilde R,q)}\right),
                \end{align*}
                where $(\underline{\lambda}, \overline{\lambda})$ are given in the proof below, $(\underline{\gamma}, \overline{\gamma})$ are defined in \eqref{bounds:Gamma}, $\phi_q=\angle(\epsilon_q,\bar A\epsilon_q)$, and $\epsilon_q$ is the vector part of the unit quaternion corresponding to the rotation $\Gamma_A(\tilde R,q)$.
\end{prop}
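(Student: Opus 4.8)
The plan is to obtain \eqref{ineq::Phi}--\eqref{ineq::DPhi} for $\Phi_{U_A}$ and $\Phi_{V_A}$ by composing the scalar estimates \eqref{ineq::UA}--\eqref{ineq::VA} relating $U_A,V_A$ to $|\cdot|_I^2$, the warping estimate \eqref{upper:lower:bounds:Gamma} of Lemma~\ref{lemma::condition1}, and the closed-form gradients \eqref{DUAG}--\eqref{DVAG}, extracting the uniform constants $\underline\lambda,\overline\lambda$ from the full-rankness of $\Theta_A$ together with the compactness of $SO(3)\times\mathcal{Q}$. For \eqref{ineq::Phi}: since $\Phi_{U_A}(\tilde R,q)=U_A(\Gamma_A(\tilde R,q))$ and $\Phi_{V_A}(\tilde R,q)=V_A(\Gamma_A(\tilde R,q))$, applying \eqref{ineq::UA} (resp. \eqref{ineq::VA}) to the rotation $\Gamma_A(\tilde R,q)$ and then substituting \eqref{upper:lower:bounds:Gamma} gives $\xi\underline\gamma|\tilde R|_I^2\le\Phi_{U_A}(\tilde R,q)\le\overline\gamma|\tilde R|_I^2$ and $\xi\underline\gamma|\tilde R|_I^2\le\Phi_{V_A}(\tilde R,q)\le2\overline\gamma|\tilde R|_I^2$, which are exactly the stated $(\alpha_1,\alpha_2)$.

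For \eqref{ineq::DPhi}, fix $(\tilde R,q)\in\mathcal F$; by Lemma~\ref{lemma::synergism}, $\mathcal F\subseteq\mathcal D$, so $\nabla\Phi$ is well defined there. Starting from \eqref{DUAG}, orthogonal invariance of the Frobenius norm together with the special case $\|[v]_\times\|_F^2=2\|v\|^2$ of \eqref{id::5} give
\[
\|\nabla\Phi_{U_A}(\tilde R,q)\|_F^2=\frac{1}{8(\lambda_{\max}^{\bar A})^2}\big\|\Theta_A(\tilde R,q)^\top\psi\big(A\Gamma_A(\tilde R,q)\big)\big\|^2 .
\]
For the inner factor I would apply \eqref{eq::psi_norm} (equivalently the quaternion identity \eqref{eq::Q::psi}) to $X=\Gamma_A(\tilde R,q)$, whose rotation axis is collinear with the vector part $\epsilon_q$ of its quaternion so that $\cos^2(u_q,\bar Au_q)=\cos^2\phi_q$, and bound $\mathrm{tr}(\underline A(I-X))$ by \eqref{ineq::tr} using $\tfrac12(\mathrm{tr}(\underline A)I-\underline A)=\bar A^2$, which yields
\[
4(\lambda_{\min}^{\bar A})^2|\Gamma_A|_I^2\big(1-|\Gamma_A|_I^2\cos^2\phi_q\big)\le\|\psi(A\Gamma_A)\|^2\le 4(\lambda_{\max}^{\bar A})^2|\Gamma_A|_I^2\big(1-|\Gamma_A|_I^2\cos^2\phi_q\big).
\]
Since $\Theta_A$ is continuous and everywhere full rank on the compact set $SO(3)\times\mathcal{Q}$, its extreme singular values are bounded away from $0$ and $\infty$; choosing $\underline\lambda,\overline\lambda$ from these extreme values so that $\tfrac{\underline\lambda}{4}\|v\|^2\le\tfrac{1}{2(\lambda_{\max}^{\bar A})^2}\|\Theta_A^\top v\|^2\le\tfrac{\overline\lambda}{4}\|v\|^2$ for all $v$, and chaining the last three displays with $\xi=\lambda_{\min}^{\bar A}/\lambda_{\max}^{\bar A}$ and with \eqref{upper:lower:bounds:Gamma}, gives
\[
\frac{\underline\lambda\xi^2\underline\gamma}{4}\big(1-|\Gamma_A|_I^2\cos^2\phi_q\big)|\tilde R|_I^2\le\|\nabla\Phi_{U_A}(\tilde R,q)\|_F^2\le\frac{\overline\lambda\overline\gamma}{4}|\tilde R|_I^2 .
\]
Taking $\min_{(\tilde R,q)\in\mathcal F}$ of $1-|\Gamma_A|_I^2\cos^2\phi_q$ in the lower bound, and dropping that factor (being $\le1$) in the upper bound, gives the claimed $\alpha_3,\alpha_4$ for $\Phi_{U_A}$. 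For $\Phi_{V_A}$, dividing by $1-\Phi_{U_A}(\tilde R,q)$ in view of \eqref{DVAG} (this time also keeping the $(1-|\Gamma_A|_I^2\cos^2\phi_q)$ factor in the upper estimate) and taking the $\min$ (resp. $\max$) over $\mathcal F$ of $\big(1-|\Gamma_A|_I^2\cos^2\phi_q\big)/\big(1-\Phi_{U_A}\big)$ reproduces the claimed $\alpha_3,\alpha_4$ for $\Phi_{V_A}$.

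The main obstacle is that $\alpha_3>0$ (and, for $\Phi_{V_A}$, that $1-\Phi_{U_A}$ stays bounded away from $0$ on $\mathcal F$, which is also what makes \eqref{DVAG} well defined there) requires $\min_{(\tilde R,q)\in\mathcal F}\big(1-|\Gamma_A(\tilde R,q)|_I^2\cos^2\phi_q\big)>0$ and $\max_{(\tilde R,q)\in\mathcal F}\Phi_{U_A}(\tilde R,q)<1$. Since $|\Gamma_A|_I^2\le1$ and $\cos^2\phi_q\le1$ (Cauchy--Schwarz), the product $|\Gamma_A|_I^2\cos^2\phi_q$ equals $1$ exactly when $\Gamma_A(\tilde R,q)$ is a rotation by $\pi$ about an eigenvector of $A$, that is, exactly when $\Gamma_A(\tilde R,q)\in\mathcal S_\pi$; likewise $\Phi_{U_A}=U_A(\Gamma_A)=1$ only on $\mathcal S_{\pi,\max}\subseteq\mathcal S_\pi$. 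Lemma~\ref{lemma::synergism} asserts precisely that $\Gamma_A(\tilde R,q)\notin\mathcal S_\pi$ for all $(\tilde R,q)\in\mathcal F$ under the designs D1--D4, so both of the above continuous functions are strictly positive on the compact set $\mathcal F$ and hence attain positive minima; at $\tilde R=I$ both sides of \eqref{ineq::DPhi} vanish, so that degenerate case needs no separate treatment. This reliance on the synergistic construction of Section~\ref{section::potential} is the crux of the argument.
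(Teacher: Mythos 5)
Your proposal is correct and follows essentially the same route as the paper's proof: bound $\Phi_{U_A},\Phi_{V_A}$ via \eqref{ineq::UA}--\eqref{ineq::VA} composed with Lemma~\ref{lemma::condition1}, bound the gradients \eqref{DUAG}--\eqref{DVAG} through the extreme singular values of the full-rank $\Theta_A$ together with the $\psi$-norm identity, and invoke Lemma~\ref{lemma::synergism} to guarantee $1-|\Gamma_A(\tilde R,q)|_I^2\cos^2\phi_q>0$ and $\Phi_{U_A}<1$ on $\mathcal{F}$. The only discrepancy is harmless bookkeeping in the normalization of $\underline\lambda,\overline\lambda$ (your explicit scaling by $1/2(\lambda_{\max}^{\bar A})^2$ does not cancel exactly to the stated $\alpha_3,\alpha_4$ unless $\underline\lambda,\overline\lambda$ absorb a factor of $(\lambda_{\max}^{\bar A})^2$), which is immaterial since these constants are defined within the proof.
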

\begin{proof}
            First, using \eqref{ineq::UA} (respectively \eqref{ineq::VA}) and the results of Lemma \ref{lemma::condition1}, it is straightforward to show that $\Phi_{U_A}$ (respectively $\Phi_{V_A}$) satisfies \eqref{ineq::Phi} with the corresponding $\alpha_1$ and $\alpha_2$ given in the Proposition.

            Now, for $(\tilde R,q)\in SO(3)\times\mathcal{Q}$, let $\lambda^{\Theta}_{\min}(\tilde R, q)$ and $\lambda^{\Theta}_{\max}(\tilde R, q)$ denote, respectively, the smallest and largest eigenvalue of $\Theta_A(\tilde R,q)\Theta_A(\tilde R,q)^{\top}$, and let the constants $\displaystyle\underline{\lambda} = \min_{SO(3)\times \mathcal{Q}}(\lambda^{\Theta}_{\min}(\tilde R, q))$ and $\displaystyle\overline{\lambda}= \max_{SO(3)\times \mathcal{Q}}(\lambda^{\Theta}_{\max}(\tilde R, q))$. It is clear that $\underline{\lambda},~\overline{\lambda}>0$ by the fact that $\Theta_A(\tilde R,q)$ is full rank. Then, from \eqref{DUAG}, one can show that
{\small
        \begin{align*}
        \|\nabla\Phi_{U_A}(\tilde R,q)\|_F^2&=\frac{1}{4(\lambda_{\max}^{\bar A})^2}\lls\tilde R\left[\Theta_A(\tilde R,q)\psi(A\Gamma_A(\tilde R,q))\right]_\times,\\
        &\hspace{1.8cm}\tilde R\left[\Theta_A(\tilde R,q)\psi(A\Gamma_A(\tilde R,q))\right]_\times\ggs
        \\
        &=\frac{1}{4(\lambda_{\max}^{\bar A})^2}\left\|\Theta_A(\tilde R,q)\psi(A\Gamma_A(\tilde R,q))\right\|^2\\
        														&\leq\frac{\overline{\lambda}}{4(\lambda_{\max}^{\bar A})^2}\left\|\psi(A\Gamma_A(\tilde R,q))\right\|^2\\
        														&\leq\frac{1}{4}\overline{\lambda}|\Gamma_A(\tilde R,q)|_I^2\leq\frac{\overline{\lambda}\overline{\gamma}}{4}|\tilde R|_I^2,
        \end{align*}
        }
        where relations \eqref{eq::psi_norm}-\eqref{ineq::alpha} and the result of Lemma \ref{lemma::condition1} have been used. Also,
{\small
         \begin{align*}
            \|\nabla\Phi_{U_A}(\tilde R,q)\|_F^2&=\frac{1}{4(\lambda_{\max}^{\bar A})^2}\left\|\Theta_A(\tilde R,q)\psi(A\Gamma_A(\tilde R,q))\right\|^2\\										&\geq\frac{\underline{\lambda}}{4(\lambda_{\max}^{\bar A})^2}\left\|\psi(A\Gamma_A(\tilde R,q))\right\|^2\\
           &\geq\frac{\underline{\lambda}(\lambda_{\min}^{\bar A})^2}{4(\lambda_{\max}^{\bar A})^2}|\Gamma_A(\tilde R,q)|_I^2\Big(1-\\
           &\hspace{3cm}|\Gamma_A(\tilde R,q)|_I^2\cos^2(\phi_q)\Big).
        \end{align*}
        }
     with  $\phi_q=\angle(\epsilon_q,\bar A\epsilon_q)$ and $\epsilon_q$ is the vector part of the unit quaternion corresponding to $\Gamma_A(\tilde R,q)$. However, in view of Lemma~\ref{lemma::synergism}, one has $\Gamma_A(\tilde R,q)\notin \mathcal{S}_{\pi}$  for all $(\tilde R,q)\in\mathcal{F}$ where $v$ is an eigenvector of $\bar A$ which implies that $\epsilon_q$ can not align with $\bar A\epsilon_q$ when $|\Gamma_A(\tilde R,q)|_I^2=1$.
        This implies that
          $$
          1-|\Gamma_A(\tilde R,q)|_I^2\cos^2(\phi_q)>0,
          $$
        during the flows of $\mathcal{F}$. On the other hand, in view of \eqref{VA}, the gradient of the potential function $\Phi_{V_A}$ satisfies \eqref{DVAG}.
        Therefore, in view of the above obtained results and using the result of Lemma \ref{lemma::condition1}, the result of Proposition \ref{proposition::condition2} follows.
\end{proof}
         Proposition \ref{proposition::condition2}, with Lemma~\ref{lemma::synergism}, provide several methods for the design of potential functions satisfying the conditions in Theorem~\ref{theorem1}. Each of the proposed potential functions can be used for the design of the hybrid attitude and gyro-bias observer presented in Subsection~\ref{section::hybrid} leading to global exponential stability results. This is summarised as follows:
\begin{theorem}\label{theorem:summary}
        Consider the attitude kinematics \eqref{kinematic} coupled with the observer \eqref{observer_hybrid}-\eqref{J} and let the potential function $\Phi(\tilde{R}, q)$ and the hysteresis gap $\delta$ be selected as in Proposition \ref{proposition::condition2} and, accordingly, $\nabla\Phi(\tilde{R}, q)$ in \eqref{beta_hybrid} is determined from \eqref{DUAG} or \eqref{DVAG}. Suppose that the angular velocity $\omega(t)$ is uniformly bounded. Then, the results of Theorem~\ref{theorem1} hold.
\end{theorem}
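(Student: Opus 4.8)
The plan is to obtain Theorem~\ref{theorem:summary} as an immediate consequence of Theorem~\ref{theorem1}, by verifying that each admissible design D1--D4, together with the corresponding potential function $\Phi\in\{\Phi_{U_A},\Phi_{V_A}\}$ and hysteresis gap $\delta$, satisfies every hypothesis of Theorem~\ref{theorem1}. Those hypotheses are: (a) $\Phi\in\mathcal{P}_{\mathcal{D}}$ for some $\mathcal{D}\subseteq SO(3)\times\mathcal{Q}$ with $\mathcal{F}\subseteq\mathcal{D}$, so that the observer \eqref{observer_hybrid}--\eqref{J} and, in particular, the gradient term appearing in \eqref{beta_hybrid} are well defined along every solution; (b) the quadratic sandwich bounds \eqref{ineq::Phi}--\eqref{ineq::DPhi} with strictly positive constants $\alpha_1,\dots,\alpha_4$; and (c) uniform boundedness of $\omega(t)$. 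Item (c) is part of the hypothesis of Theorem~\ref{theorem:summary}, hence immediate.

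Item (a) is exactly the content of Lemma~\ref{lemma::synergism}: under any of D1--D4 one has $\Phi\in\mathcal{P}_{\mathcal{D}}$ and $\mathcal{F}\subseteq\mathcal{D}$, and moreover $\Gamma_A(\tilde R,q)\notin\mathcal{S}_\pi$ for all $(\tilde R,q)\in\mathcal{F}$. When $\Phi=\Phi_{V_A}$ I would additionally note that the gradient expression \eqref{DVAG} is well posed on $\mathcal{F}$, since $\Gamma_A(\tilde R,q)\notin\mathcal{S}_{\pi,\max}\subseteq\mathcal{S}_\pi$ implies $\Phi_{U_A}(\tilde R,q)=U_A(\Gamma_A(\tilde R,q))<1$ there, so the denominator in \eqref{DVAG} does not vanish. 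Item (b) is precisely Proposition~\ref{proposition::condition2}, which supplies explicit formulas for $\alpha_1,\dots,\alpha_4$ valid for both $\Phi_{U_A}$ and $\Phi_{V_A}$. Once (a), (b), (c) are in hand, Theorem~\ref{theorem1} applies verbatim and yields a finite number of jumps together with uniform global exponential stability of $e=(|\tilde R|_I,\|\tilde b_\omega\|)^\top=0$, which is exactly the claimed conclusion. No separate treatment of D1--D4 is needed beyond what Lemma~\ref{lemma::synergism} and Proposition~\ref{proposition::condition2} already encapsulate.

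The one step that deserves genuine care is confirming that the constant $\alpha_3$ produced by Proposition~\ref{proposition::condition2} is strictly positive, since it is proportional to $\min_{(\tilde R,q)\in\mathcal{F}}(1-|\Gamma_A(\tilde R,q)|_I^2\cos^2(\phi_q))$ (and, for $\Phi_{V_A}$, to the minimum of that same quantity divided by $1-\Phi_{U_A}(\tilde R,q)$). I would argue as follows: $SO(3)\times\mathcal{Q}$ is compact because $SO(3)$ is compact and $\mathcal{Q}$ is finite, and $\mathcal{F}$ is a nonempty closed subset of it, hence compact; the map $(\tilde R,q)\mapsto 1-|\Gamma_A(\tilde R,q)|_I^2\cos^2(\phi_q)$ is continuous on $\mathcal{F}$ (it is assembled from the continuous warping transformation $\Gamma_A$, the continuous distance $|\cdot|_I$, and the angle/axis data of $\Gamma_A(\tilde R,q)$, all continuous on the region $\Gamma_A\notin\mathcal{S}_{\pi,\max}$ that contains $\mathcal{F}$); and, by Lemma~\ref{lemma::synergism}, the quantity is pointwise strictly positive on $\mathcal{F}$, because $\Gamma_A(\tilde R,q)\notin\mathcal{S}_\pi$ precludes $\epsilon_q$ from aligning with $\bar A\epsilon_q$ at rotations of angle $\pi$, i.e.\ whenever $|\Gamma_A(\tilde R,q)|_I=1$. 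A continuous, pointwise positive function on a nonempty compact set attains a strictly positive minimum, so $\alpha_3>0$; the same reasoning, now carrying the extra factor $1/(1-\Phi_{U_A})$ which is continuous and bounded away from $0$ and $+\infty$ on $\mathcal{F}$ for the same reason, covers the $\Phi_{V_A}$ case. With $\alpha_3>0$ secured, all four constants of Proposition~\ref{proposition::condition2} are strictly positive, the hypotheses of Theorem~\ref{theorem1} are met, and the theorem follows by invoking it. I do not anticipate any other obstacle.
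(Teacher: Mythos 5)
Your proposal is correct and follows essentially the same route as the paper, which presents Theorem~\ref{theorem:summary} precisely as a summary whose justification is the combination of Lemma~\ref{lemma::synergism} (well-posedness, $\mathcal{F}\subseteq\mathcal{D}$, and avoidance of $\mathcal{S}_\pi$ on the flow set), Proposition~\ref{proposition::condition2} (the bounds \eqref{ineq::Phi}--\eqref{ineq::DPhi}), and Theorem~\ref{theorem1}. Your added compactness argument for the strict positivity of $\alpha_3$ (and the boundedness of the factor $1/(1-\Phi_{U_A})$ for $\Phi_{V_A}$) merely makes explicit what the paper leaves implicit when taking the minimum over $\mathcal{F}$, and is consistent with its reasoning.
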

\begin{remark}
         The results of Corollary~\ref{corollary::Projection} and Corollary~\ref{corollary::exponential bound} can also be shown to hold when using the potential function $\Phi(\tilde{R}, q)$ and the hysteresis gap $\delta$ in Theorem~\ref{theorem:summary}.
\end{remark}
        \begin{remark}\label{remark:diff:U or V}
         Despite the fact that a similar stability result is guaranteed in all design cases D1 -- D4, it is important to mention some differences between setting $\Phi = \Phi_{U_A}$ or $\Phi = \Phi_{V_A}$ (for any $A$ in Lemma~\ref{lemma::synergism}). These differences mainly reside on the parameters $\alpha_i$, $i=1,\ldots,4$, obtained in Proposition~\ref{proposition::condition2} in each case. For example, the scalar $\alpha_3$ associated to $\Phi_{V_A}$ is increased by a factor $1/(1-\Phi_{U_A})$ compared to the one associated to $\Phi_{U_A}$. It can be seen from \eqref{estimate_error_cor2} for instance that a change in $\alpha_3$ will influence the convergence rate of the attitude estimation error.
\end{remark}

\subsection{Implementation of the Hybrid Observers using Inertial Measurements}
            The hybrid schemes described in the previous section depend explicitly on $\tilde{R}=R\hat{R}^\top$, and hence on $R$ which is not directly measured. Note that it is possible to algebraically reconstruct the attitude matrix $R$ from the available inertial measurements, described in Section~\ref{section:pb:formulation}, using static attitude determination algorithms \cite{Shuster1981, Markley1988}. The resulting estimation scheme can be seen as a filtering algorithm for the reconstructed attitude matrix. It is desirable in practice, however, to use directly the vector measurements in the estimation algorithm without reconstructing the rotation matrix. In our preliminary result \cite{berkaneACC2016observer}, we have shown that the hybrid observer in Theorem~\ref{theorem:summary}, with the parameters selected according to the second design method D2, (named Observer II) can be written using some new vectors defined as a combination of the measured inertial vectors. While the method in \cite{berkaneACC2016observer} is attractive in the sense that it allows for explicit expressions of the observer in terms of the newly defined vectors, it yet requires an important preconditioning process of the measured vectors (see \cite[Lemma 2]{berkaneACC2016observer}).

            In this section, we present explicit formulations of the hybrid attitude and gyro-bias observers obtained using designs D3 and D4 in Theorem~\ref{theorem:summary} using directly the measurements of inertial vectors satisfying Assumption 1. Let the matrix $A$ be defined as
            \begin{equation}\label{A:for:Theorem3}
            A = \sum_{i=1}^{n}\rho_ia_ia_i^\top,
            \end{equation} for some $\rho_i>0$, and let $\bar{A}:=\mathrm{tr}(A)I-A$. Define also the following quantities
                \begin{equation}\label{R:vertheta:Theta:explicit}\begin{array}{l}
                 \bar\Phi=\displaystyle\frac{1}{8\lambda_{\max}^{\bar A}}\displaystyle\sum_{i=1}^{n}\rho_i\|b_i-\hat R^\top\bar{\mathfrak{R}}a_i\|^2,\\
                   \bar\beta=\displaystyle\frac{1}{8\lambda_{\max}^{\bar A}}\hat R^\top\bar\Theta\hat R\displaystyle\sum_{i=1}^{n}\rho_i(b_i\times\hat R^\top\bar{\mathfrak{R}}a_i),\\
                \bar{\mathfrak{R}}=\displaystyle\mathcal{R}_a(2\sin^{-1}(k\vartheta),\nu(q)),\\
                \vartheta=\displaystyle\sum_{i=1}^{n}\rho_i\|b_i-\hat R^\top a_i\|^2/8\lambda_{\max}^{\bar A},\\
                \bar\Theta =\displaystyle\Big(I+\frac{k\hat R\sum_{i=1}^{n}\rho_i(b_i\times\hat R^\top a_i)\nu(q)^\top}{2\lambda_{\max}^{\bar A}\sqrt{1-\vartheta^2}}\Big).
                \end{array}
                \end{equation}
Our result in this subsection is given in the following theorem.
        \begin{theorem}\label{proposition::explicit}
        Consider the four hybrid observers obtained from \eqref{observer_hybrid}-\eqref{J} with the potential function $\Phi$, the index set $\mathcal{Q}$, the map $\nu(\cdot)$, and the hysteresis gap $\delta$ being selected according to one of designs D3--D4, where $\Phi_{U_A}$, $\Phi_{V_A}$, $\Gamma_A$ are given in \eqref{Phi_UA_VA} and \eqref{Gamma}-\eqref{k_max}. Also, let the matrix $A$ used in \eqref{Phi_UA_VA} and \eqref{Gamma} be defined as in \eqref{A:for:Theorem3} with $\rho_i$ selected such that $A$ has distinct eigenvalues.
        Suppose also that Assumption 1 on the vector measurements is satisfied. Then, the terms $\Phi(\tilde{R}, q)$ and $\beta(\Phi(\tilde{R}, q))$ in \eqref{observer_hybrid}-\eqref{J} can be written in terms of the measured vectors, for each observer, as follows:
                \begin{align}\label{phi_beta_UA_explicit}
                \mbox{Observer III}:&~\left\{\begin{array}{l}
                   \Phi(\tilde R,q)=\bar\Phi,\\
                   \beta(\Phi(\tilde R,q))=\bar\beta,
                    \end{array}\right.\\
                \label{phi_beta_VA_explicit}
                \mbox{Observer IV}:&~\left\{\begin{array}{l}
                   \Phi(\tilde R,q)=2\big(1-\displaystyle\sqrt{1-\bar\Phi}\big),\\
                   \beta(\Phi(\tilde R,q))=\frac{\bar\beta}{\sqrt{1-\bar\Phi}},
                    \end{array}\right.
                    \end{align}
        where $\bar{\Phi}$, $\bar{\beta}$ are defined in \eqref{R:vertheta:Theta:explicit}. In addition, both hybrid observers III and IV, when coupled with system \eqref{kinematic}, ensures the result of Theorem~\ref{theorem1} provided that the angular velocity of the rigid body is uniformly bounded.
            \end{theorem}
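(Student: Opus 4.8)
The plan is to substitute $\tilde R=R\hat R^\top$ together with the measurement model $b_i=R^\top a_i$ into each ingredient of the observer, and to collapse every occurrence of $A=\sum_i\rho_i a_ia_i^\top$ acting on a rotation by means of Lemma~\ref{lemma::cross_prod}. First I would apply \eqref{VA_bi} with $X=R$, $Y=\hat R$ to get $\mathrm{tr}(A(I-\tilde R))=\tfrac12\sum_i\rho_i\|b_i-\hat R^\top a_i\|^2$, whence $U_A(\tilde R)=\vartheta$ and hence $\mathfrak{R}_A(\tilde R,q)=\bar{\mathfrak{R}}$ and $\Gamma_A(\tilde R,q)=\tilde R\bar{\mathfrak{R}}=R(\bar{\mathfrak{R}}^\top\hat R)^\top$. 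Applying \eqref{VA_bi} once more, now with $X=R$ and $Y=\bar{\mathfrak{R}}^\top\hat R$ so that $Y^\top a_i=\hat R^\top\bar{\mathfrak{R}}a_i$, gives $\mathrm{tr}(A(I-\Gamma_A(\tilde R,q)))=\tfrac12\sum_i\rho_i\|b_i-\hat R^\top\bar{\mathfrak{R}}a_i\|^2$, i.e. $\Phi_{U_A}(\tilde R,q)=U_A\circ\Gamma_A(\tilde R,q)=\bar\Phi$; the corresponding Observer~IV identity $\Phi_{V_A}(\tilde R,q)=2(1-\sqrt{1-\bar\Phi})$ is then immediate from \eqref{VA} and \eqref{Phi_UA_VA}.

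For the innovation term, recall $\beta(\Phi(\tilde R,q))=\hat R^\top[\tilde R^\top\nabla\Phi(\tilde R,q)]_\otimes$. For Observer~III, inserting the gradient \eqref{DUAG} with $X=\tilde R$ and cancelling $\tilde R^\top\tilde R=I$ gives $\beta=\tfrac{1}{4\lambda_{\max}^{\bar A}}\hat R^\top\Theta_A(\tilde R,q)^\top\psi(A\Gamma_A(\tilde R,q))$. I would then evaluate \eqref{PA_bi} twice: with $X=R$, $Y=\bar{\mathfrak{R}}^\top\hat R$ to get $\psi(A\Gamma_A(\tilde R,q))=\tfrac12\bar{\mathfrak{R}}^\top\hat R\sum_i\rho_i(b_i\times\hat R^\top\bar{\mathfrak{R}}a_i)$, and with $X=R$, $Y=\hat R$ to get $\psi(A\tilde R)=\tfrac12\hat R\sum_i\rho_i(b_i\times\hat R^\top a_i)$ --- this last being precisely the vector entering the rank-one correction of $\Theta_A(\tilde R,q)$ in \eqref{Theta}. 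Substituting \eqref{Theta} and multiplying out $\Theta_A(\tilde R,q)^\top\bar{\mathfrak{R}}^\top$, the decisive simplification is that $\bar{\mathfrak{R}}=\mathcal{R}_a(\cdot,\nu(q))$ fixes its own axis, so $\bar{\mathfrak{R}}\bar{\mathfrak{R}}^\top=I$ and $\nu(q)^\top\bar{\mathfrak{R}}^\top=\nu(q)^\top$; this collapses $\Theta_A(\tilde R,q)^\top\bar{\mathfrak{R}}^\top$ into the matrix $\bar\Theta$, and gathering the surviving factors of $\hat R$ yields $\beta=\bar\beta$. For Observer~IV, \eqref{DVAG} gives $\nabla\Phi_{V_A}=\nabla\Phi_{U_A}/\sqrt{1-\Phi_{U_A}}$, so the same computation returns $\Phi(\tilde R,q)=2(1-\sqrt{1-\bar\Phi})$ and $\beta(\Phi(\tilde R,q))=\bar\beta/\sqrt{1-\bar\Phi}$.

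For the stability claim, I would observe that designs D3 and D4 built from the matrix $A$ in \eqref{A:for:Theorem3} are a special instance of the setting already analysed: picking the weights $\rho_i$ so that $A$ is positive definite with distinct eigenvalues --- which is possible under Assumption~1, augmenting the measured set if necessary by a cross product $b_i\times b_j=R^\top(a_i\times a_j)$ of two linearly independent measured vectors so that the $a_i$ span $\mathbb{R}^3$ --- makes $\bar A$ positive definite and puts the observer exactly under the hypotheses of Lemma~\ref{lemma::synergism} and Proposition~\ref{proposition::condition2}. These give $\Phi_{U_A},\Phi_{V_A}\in\mathcal{P}_{\mathcal{D}}$, $\mathcal{F}\subseteq\mathcal{D}$ and the quadratic bounds \eqref{ineq::Phi}--\eqref{ineq::DPhi}; with $\omega$ uniformly bounded, Theorem~\ref{theorem1} applies and yields the stated conclusion, just as in Theorem~\ref{theorem:summary}.

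The part I expect to be the main obstacle is the rotation bookkeeping in the second step. The cross-product identity \eqref{PA_bi} carries a left rotation factor, so a naive substitution produces a stray $\bar{\mathfrak{R}}^\top$, and a second such factor appears once $\Theta_A(\tilde R,q)^\top$ is expanded; recognising that these two cancel down to the compact form $\hat R^\top\bar\Theta\hat R$ requires invoking the axis invariance $\bar{\mathfrak{R}}\nu(q)=\nu(q)$ at just the right place, and keeping straight throughout that $A$, $\bar A$ and $\lambda_{\max}^{\bar A}$ are built from the inertial vectors $a_i$ while the data $b_i$ live in the body frame. Everything past that point is routine algebra with the identities \eqref{id::1}--\eqref{id::6}.
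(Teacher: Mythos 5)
Your proposal is correct and follows essentially the same route as the paper: use Lemma~\ref{lemma::cross_prod} with $\tilde R=R\hat R^\top$, $b_i=R^\top a_i$ to express $U_A(\tilde R)$, $\psi(A\tilde R)$, $\Phi_{U_A}(\tilde R,q)$ and $\psi(A\Gamma_A(\tilde R,q))$ in terms of the measurements, substitute into \eqref{beta_hybrid} via \eqref{DUAG}--\eqref{DVAG}, and invoke Lemma~\ref{lemma::synergism}, Proposition~\ref{proposition::condition2} and Theorem~\ref{theorem1} for stability under designs D3--D4. Your explicit bookkeeping of the stray $\bar{\mathfrak{R}}^\top$ factor and its cancellation via $\bar{\mathfrak{R}}\bar{\mathfrak{R}}^\top=I$ and $\bar{\mathfrak{R}}\nu(q)=\nu(q)$ is exactly the (unwritten) algebra the paper summarizes when passing from \eqref{beta3} to $\bar\beta$.
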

        \begin{proof}
 Let us show that relation \eqref{phi_beta_UA_explicit} holds for the positive-definite matrix $A$ as defined in the theorem. Consider the smooth potential function $U_A$ given in \eqref{UA}. Using the result in Lemma~\ref{lemma::cross_prod}, one can easily deduce that
        \begin{align}\label{UA::explicit}
            U_A(\tilde R)&=\frac{1}{8\lambda_{\max}^{\bar A}}\sum_{i=1}^{n}\rho_i\|b_i-\hat R^\top a_i\|^2,\\
            \psi(A\tilde R)&=\frac{1}{2}\hat R\sum_{i=1}^{n}\rho_i(b_i\times\hat R^\top a_i),\label{psiR::explicit}
        \end{align}
        where we used $\tilde{R} = R \hat{R}^\top$ and $b_i = R^\top a_i$, $i=1,\ldots, n$. It can also be deduced from Lemma \ref{lemma::cross_prod} and \eqref{Phi_UA_VA} with \eqref{Gamma} that
        \begin{align}\label{Phi_UA::explicit:inproof}
            \Phi_{U_A}(\tilde R,q)&=\frac{1}{8\lambda_{\max}^{\bar A}}\sum_{i=1}^{n}\rho_i\|b_i-\hat R^\top\mathfrak{R}_A(\tilde R,q)a_i\|^2,\\
            \label{Psi:phiUA:explicit:inproof}\psi(\Gamma_A(\tilde R,q))&=\frac{1}{2}\mathfrak{R}_A(\tilde R,q)^\top\hat R\sum_{i=1}^{n}\rho_i(b_i\times\hat R^\top\mathfrak{R}_A(\tilde R,q)a_i),
        \end{align}
        with $\mathfrak{R}_A(\tilde R,q)$ given in \eqref{Rq}. In addition, relations \eqref{beta_hybrid} with \eqref{DUAG} can be used to show that
        \begin{equation}\label{beta3}
            \beta(\Phi_{U_A}(\tilde R,q))=\frac{1}{4\lambda_{\max}^{\bar A}}\hat R^\top\Theta_A(\tilde R,q)\psi(A\Gamma(\tilde R,q)).
        \end{equation}
        Then, \eqref{phi_beta_UA_explicit} with \eqref{R:vertheta:Theta:explicit} can be obtained by taking into account \eqref{Rq} and \eqref{Theta} with \eqref{UA::explicit}-\eqref{Psi:phiUA:explicit:inproof}.   It should be noted from Lemma~\ref{lemma::synergism} and \eqref{DVAG} that
        $$\beta\big(\Phi_{V_A}(\tilde{R}, q)\big)=\beta\big(\Phi_{U_A}(\tilde{R}, q)\big)\frac{1}{\sqrt{1-\Phi_{U_A}(\tilde R,q)}}$$
        is well defined on the flows of $\mathcal{F}$. In addition, one can verify from \eqref{R:vertheta:Theta:explicit} and \eqref{UA::explicit} that $\vartheta = k U_A(\tilde{R})$ which, together with \eqref{UA}, imply that $\vartheta^2 < 1$ holds under condition \eqref{k_max}.

        To show that the hybrid observers described in the theorem ensure the uniform global exponential stability result in Theorem~\ref{theorem1}, it is enough to verify the conditions of Theorem~\ref{theorem1}. Consider Observer III (respectively, Observer IV) from \eqref{observer_hybrid}-\eqref{J} with \eqref{R:vertheta:Theta:explicit}-\eqref{phi_beta_UA_explicit}. In addition to the fact that $A=\sum_{i=1}^{n}\rho_i a_i a_i^\top$ in this case is positive definite, it is possible always to select the weighting scalars $\rho_i>0$ such that the eigenvalues of $A$ are distinct. With such a choice, the parameters  $\Phi$, $A$, $\mathcal{Q}$, $\nu(\cdot)$ and $\delta$ correspond to design method D3 (respectively D4). Therefore, the result of the theorem can be shown, for each hybrid observer, using the results of Lemma~\ref{lemma::synergism}, Proposition \ref{proposition::condition2}, and Theorem~\ref{theorem1}. The proof is complete.
\end{proof}

\begin{remark}
        Explicit formulation of the hybrid observers satisfying the results of Corollaries~\ref{corollary::Projection} and \ref{corollary::exponential bound} can also be derived following the same lines in Theorem~\ref{proposition::explicit}.
\end{remark}
        Theorem~\ref{proposition::explicit} provides explicit formulations of two hybrid attitude and gyro-bias observers, in terms of the available inertial vector measurements. It can be noticed that only two configurations are needed for Observers III and IV. It is important to notice that this Assumption 1 is only technical and does not exclude the case where measurements of only two vectors are available, say $b_1$ and $b_2$ corresponding to the non-collinear inertial vectors $a_1$ and $a_2$. In this case, one can always construct a third vector $b_3=b_1\times b_2$ which corresponds to the measurement of $a_3 = a_1\times a_2$. The differences between Observers III and IV mentioned in Remark~\ref{remark:diff:U or V} will be further studied through numerical examples.

        In \cite{Mahony2008}, an attitude observer of the form \eqref{observer_hybrid}-\eqref{bias_hybrid} has been proposed with an input $\beta$ being selected as the sum of the vector-errors between the measured vectors $b_i$ and their estimates $\hat R^\top a_i$, {\it i.e.}, $\beta=\sum_{i=1}^n\rho_i(b_i\times\hat R^\top a_i)$. This smooth observer can be obtained from \eqref{phi_beta_UA_explicit}, with \eqref{R:vertheta:Theta:explicit}, by setting $k=0$ and choosing the parameters corresponding to Observer III. As mentioned above, the corresponding hybrid attitude and gyro-bias observer (Observer III) employs a switching mechanism between two observer configurations. Each configuration is almost equivalent to the explicit attitude observer in \cite{Mahony2008} except that a factor proportional to $\bar{\Theta}$ (in \eqref{R:vertheta:Theta:explicit}) is applied to the  input $\beta$ that is designed, in our case, based on the sum of vector-errors between the inertial measurements $b_i$ and their estimates \textit{perturbed} by the rotation matrix $\bar{\mathfrak{R}}$ (in \eqref{R:vertheta:Theta:explicit}). A key feature here is that, as the estimation error gets small, the values of $\bar{\mathfrak{R}}$ and $\bar{\Theta}$ approach the identity and the proposed hybrid scheme, {\it i.e.,} Observer III in Theorem~\ref{proposition::explicit}, becomes identical to the attitude observer proposed in \cite{Mahony2008}. On the other hand, for extremely large attitude estimation errors the perturbation matrix $\bar{\mathfrak{R}}$ becomes significant to guarantee the necessary gap between the two configurations. A similar remark can be made in regards of the attitude observer in \cite{Mahony2008} (with an obvious modification related to the vector measurements) and hybrid Observer I in Theorem~\ref{proposition::explicit}, which switches between six observer configurations.

\section{Simulation results}\label{section::simulation}
            In this section, we present numerical examples to validate our theoretical results. Consider system \eqref{kinematic} with  $$\omega(t)=\left[0.5\sin(0.1t),0.7\sin(0.2t+\pi),\sin(0.3t+\pi/3)\right]^{\top}$$ ($\mathrm{rad}/\sec$), and suppose that the measured angular velocity is given by \eqref{omega:measured} with the slowly varying bias
            $$b_\omega(t)=\big(1+0.1\cos(0.1t)\big)[0.003,-0.005,0.01]^{\top}.$$

            We also consider measurements of two non-collinear inertial vectors given by $a_1=[1,-1,1]^{\top}/\sqrt{3}$ and $a_2=[0,0,1]^{\top}$. We implement all the proposed hybrid observers \eqref{observer_hybrid}-\eqref{J}, with different choices of the potential function $\Phi(\tilde R,q)$, with $\hat{b}_{\omega}(t_0)=0$, $q(t_0)=1$, $\gamma_P=5$ and $\gamma_I=10$. The projection operator for the bias estimation law is implemented with a parameter bound $\bar b_\omega=0.1$. In all simulations, the selected initial attitude estimates $\hat{R}(t_0)$ lead to a large initial attitude estimation error $\tilde R(t_0)=\mathcal{R}_Q(0, e_1)$, with $e_1=(1,0,0)^\top$.
\subsection{Example 1}
            We implement the two hybrid observers (Observer I and Observer II corresponding to designs D1 and D2 and referred to as $\mathrm{HO_{I}}$ and $\mathrm{HO_{II}}$, respectively, in the figures below). The attitude matrix $R$ is reconstructed using any static attitude determination method such as SVD. The hysteresis gap of the hybrid switching mechanism is chosen as $\delta=0.8\Delta_{I}(k)$, for Observer I, and $\delta=0.8\Delta_{II}(k)$ for the Observer II, where the gain $k$ is selected in both cases as $k=0.95/\sqrt{5}$ such that condition \eqref{k_max} is satisfied.

            For comparison purposes, we also implement the following attitude observer (that we refer to as Observer $\mathrm{SO_{I}}$ standing for smooth observer I)
            \begin{align}\label{observer_smooth}
            \dot{\hat R}&=\hat R\left[\omega^y-\hat b_\omega+\gamma_P\beta\right]_\times,\quad \hat R(t_0)\in SO(3),\\
            \label{bias_smooth}
            \dot{\hat b}_\omega&=\mathrm{Proj}\left(-\gamma_I\beta,\hat b_\omega\right),\quad \hat b_\omega(t_0)\in\mathbb{R}^3,\\
            \label{beta_smooth}
            \beta&=\frac{1}{4}\hat R^\top\psi(\tilde R),
            \end{align}
            which is inspired by the attitude observer proposed in \cite{Mahony2008}. This smooth attitude observer can be obtained from the hybrid observer (Observer I) by setting $k=0$.

            The obtained results in this example are given in Figs.~\ref{Fig:attitude:error:Ex1}-\ref{Fig:bias:error:Ex1} showing, respectively, the attitude estimation error $|\tilde R|_I^2$ and the bias estimation error $\|\tilde b_\omega\|$. It can be seen from these figures that both hybrid attitude and gyro-bias observers ensure faster convergence of the estimation errors as compared to the traditional smooth observer \eqref{observer_smooth}-\eqref{beta_hybrid} despite the large initial attitude estimation error. Also, as mentioned in Remark~\ref{remark:diff:U or V}, the hybrid observer (Observer II) shows better performance (in terms of convergence) as compared to Observer I due to the nature of the potential function used in the design of each observer.
\begin{figure}[h!]
\centering
\includegraphics[scale=0.5]{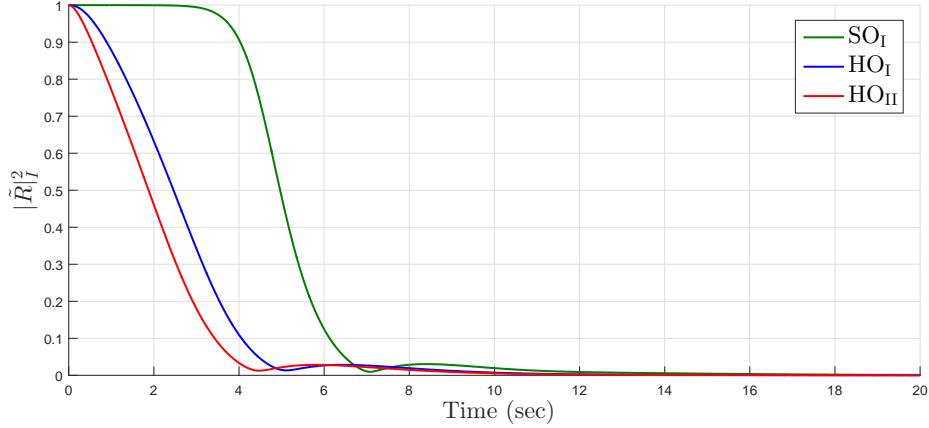}
\caption{\footnotesize{Example 1. Attitude estimation error.}}
\label{Fig:attitude:error:Ex1}
\end{figure}
\begin{figure}[h!]
\centering
\includegraphics[scale=0.5]{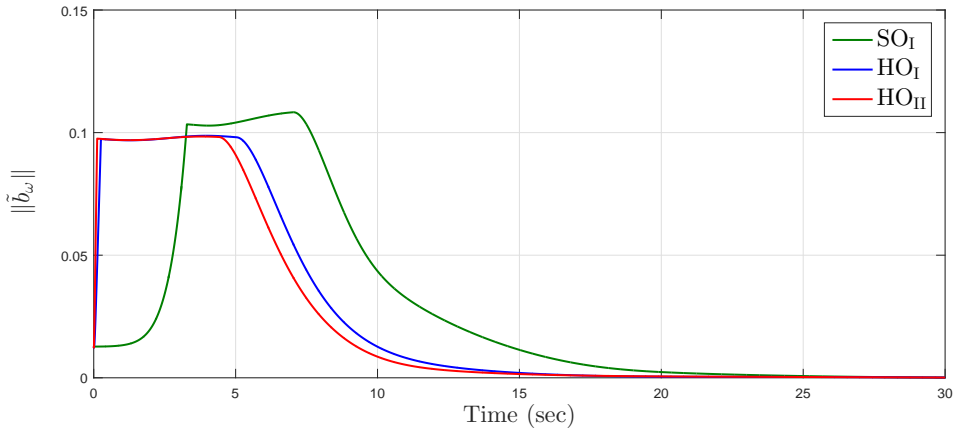}
\caption{\footnotesize{Example 1. Bias estimation error.}}
\label{Fig:bias:error:Ex1}
\end{figure}
\subsection{Example 2}
            In this second example, we implement the two hybrid observers (Observer III and Observer IV, referred to as $\mathrm{HO_{III}}$ and $\mathrm{HO_{IV}}$ in the figures below) given in Theorem~\ref{proposition::explicit}. Since only two vector measurements are assumed to be available, we construct a third vector $b_3=b_1\times b_2$ associated to the inertial vector $a_3=a_1\times a_2$ such that Assumption 1 is satisfied. Accordingly, we consider the matrix $A=\sum_{i=1}^{n}\rho_ia_ia_i^\top$ with $\rho_1=1, \rho_2=3$ and $\rho_3=1$. Note that the eigenvalues and eigenvectors of $A$ are used to determine the parameters of the hybrid observers in this example as presented in the design methods D3 and D4 in Lemma~\ref{lemma::synergism}. Similarly to the previous example, we select the hysteresis gap of the hybrid switching mechanism as $\delta=0.8\Delta_{III}(k)$, for Observer III, and $\delta=0.8\Delta_{IV}(k)$ for Observer IV, where the gain $k$ is selected as above so that condition \eqref{k_max} is verified.

            We also implement the following smooth attitude observer (that we refer to as $\mathrm{SO_{II}}$) for comparison purposes
\begin{align}\label{observer_smooth2}
            \dot{\hat R}&=\hat R\left[\omega^y-\hat b_\omega+\gamma_P\beta\right]_\times,\quad \hat R(t_0)\in SO(3),\\
            \label{bias_smooth2}
            \dot{\hat b}_\omega&=\mathrm{Proj}\left(-\gamma_I\beta,\hat b_\omega\right),\quad \hat b_\omega(t_0)\in\mathbb{R}^3,\\
            \label{beta_smooth2}
            \beta&=\frac{1}{8\lambda_{\max}^{\bar A}}\sum_{i=1}^3\rho_i\big(b_i\times\hat R^\top a_i\big),
\end{align}
            which is also inspired by \cite{Mahony2008} and \cite{grip2012observer}, and obtained from Observer III in Theorem~\ref{proposition::explicit} by setting $k=0$.

            The obtained results are given in Figs.~\ref{Fig:attitude:error:Ex2}-\ref{Fig:bias:error:Ex2}. Similarly to the previous example, one can deduce from these figures that Observer IV exhibits faster convergence as compared to Observer III, and both hybrid observers ensure better performance (in terms of convergence speed) as compared to the smooth estimation algorithm \eqref{observer_smooth2}-\eqref{beta_smooth2}.
\begin{figure}[h!]
\centering
\includegraphics[width = \columnwidth]{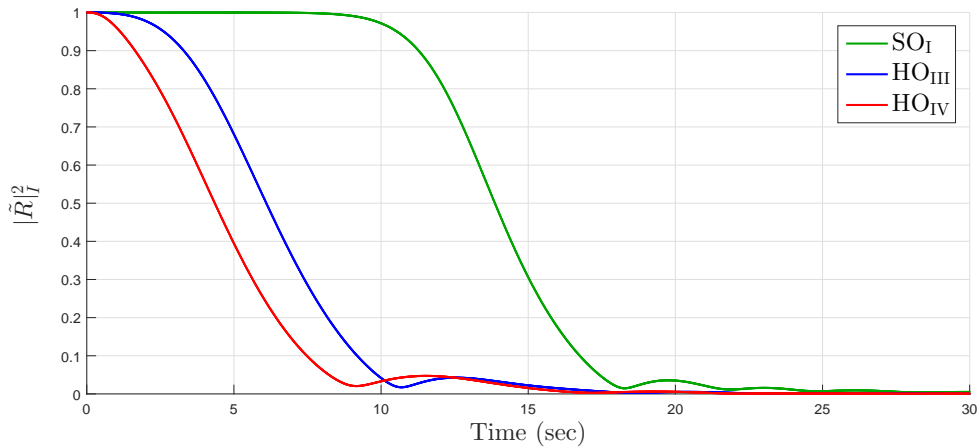}
\caption{\footnotesize{Example 2. Attitude estimation error.}}
\label{Fig:attitude:error:Ex2}
\end{figure}
\begin{figure}[h!]
\centering
\includegraphics[width = \columnwidth]{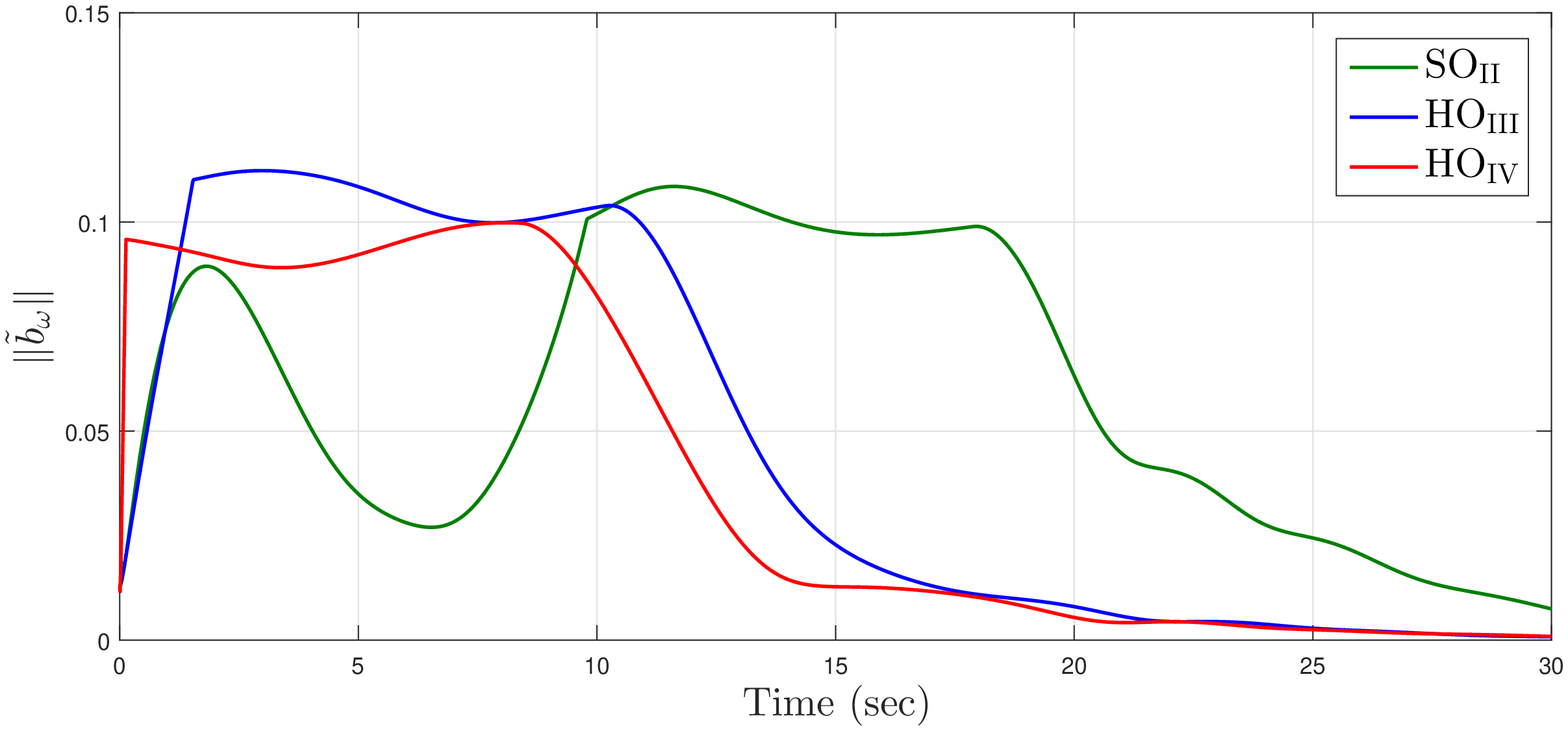}
\caption{\footnotesize{Example 2. Bias estimation error.}}
\label{Fig:bias:error:Ex2}
\end{figure}

\section{Conclusion}\label{conclusion}
        Nonlinear hybrid attitude and gyro-bias observers, leading to global exponential stability results, have been proposed. These observers rely on gyro and inertial vector measurements without the need for the reconstruction of the rotation matrix. Different sets of potential functions have been designed via an appropriate angular warping transformation applied to some smooth and non-smooth potential functions on $SO(3)$. These sets of potential functions are the backbones for the switching mechanisms involved in the four proposed hybrid observers. Numerical examples have been given to illustrate the performance of the proposed hybrid observers as compared to smooth (non-hybrid) observers,  inspired from the literature  in the case of large initial attitude estimation errors.

\appendix
\section{Proof of Lemma~\ref{lemma::derivatives}}\label{proof::lemma::derivatives}
Along the trajectories of $\dot X=X[u]_\times$, one has
\begin{align*}
\frac{d}{dt}\mathrm{tr}(A(I-X))=-\mathrm{tr}(A\dot X)&=-\mathrm{tr}(AX[u]_\times)\\
																				  &=\lls[u]_\times,AX\ggs\\
																				  &=\lls[u]_\times,\mathbb{P}_a(AX)\ggs\\
																				  &=2u^\top\psi(AX),
\end{align*}
where we have used identities \eqref{id::3}-\eqref{id::4}. Moreover, using \eqref{id::6} one has
\begin{align*}
\dot{\psi}(AX)=\frac{d}{dt}\left[\mathbb{P}_a(AX)\right]_\otimes&=\frac{1}{2}[AX[u]_\times+[u]_\times X^\top A]_\otimes,\\
 &=\frac{1}{2}[\mathrm{tr}(AX)I-X^\top A]u:=E(AX)u.
\end{align*}
\section{Proof of Lemma~\ref{lemma::identities_Q}}\label{proof::lemma::identities_Q}
Let $(\eta,\epsilon)\in\mathbb{Q}$ be the quaternion representation of the attitude matrix $X$. Using the Rodrigues formula (\ref{rod}) and \eqref{id::4} one obtains
\begin{align*}
\mathrm{tr}(A(I-R))&=\mathrm{tr}(A(-2[\epsilon]_\times^2-2\eta[\epsilon]_\times))\\
							 &=-2\mathrm{tr}(A[\epsilon]_\times^2)+2\eta\lls[\epsilon]_\times,A\ggs\\
							 &=-2\mathrm{tr}(A[\epsilon]_\times^2)+2\eta\lls[\epsilon]_\times,\mathbb{P}_a(A)\ggs\\
							 &=-2\mathrm{tr}(A[\epsilon]_\times^2)
\end{align*}
where we used $\mathbb{P}_a(A)=0$ since $A$ is symmetric. Now, using  \eqref{id::1}-\eqref{id::3} one gets
\begin{equation*}
\mathrm{tr}(A(I-R))=2\mathrm{tr}(\epsilon^{\top}\epsilon A-A\epsilon\epsilon^{\top})=2\epsilon^{\top}(\mathrm{tr}(A)I-A)\epsilon:=4\epsilon^\top\bar A\epsilon.
\end{equation*}
Again, using \eqref{rod} and \eqref{id::1}, one has
\begin{align*}
\mathbb{P}_a(AR)&=\frac{1}{2}(AR-R^\top A)\\
						   &=A\epsilon\epsilon^\top-\epsilon\epsilon^\top A+\eta A[\epsilon]_\times+\eta[\epsilon]_\times A\\
						   &=[\epsilon\times A\epsilon]_\times+2\eta[\bar{A}\epsilon]_\times,
\end{align*}
where \eqref{id::2} and \eqref{id::6} have been used. Consequently, one obtains
$$
\psi(AR)=\epsilon\times A\epsilon+\eta \bar{A}\epsilon=2(\eta I-[\epsilon]_\times)\bar{A}\epsilon.
$$
\section{Proof of Lemma~\ref{lemma::identities_SO(3)}}\label{proof::identities_SO(3)}
Let $(\eta,\epsilon)\in\mathbb{Q}$ be the quaternion representation of the attitude matrix $X$. In view of \eqref{id::normX} and \eqref{eq::Q::tr}, it is clear that $|X|_I^2=\|\epsilon\|^2$. Moreover, using again \eqref{eq::Q::tr}, one has
\begin{align*}
4\lambda_{\min}^{\bar A}|X|_I^2=4\lambda_{\min}^{\bar A}\|\epsilon\|^2\leq\mathrm{tr}(A(I-X))\leq4\lambda_{\max}^{\bar A}\|\epsilon\|^2=4\lambda_{\max}^{\bar A}|X|_I^2.
\end{align*}
Moreover, making use of \eqref{eq::Q::psi} and identity \eqref{id::1}, one obtains
\begin{equation*}
\begin{split}
\|\psi(AX)\|^2&=4\epsilon^{\top}\bar{A}(\eta I+[\epsilon]_\times)(\eta I-[\epsilon]_\times)\bar{A}\epsilon\\
									   &=4\epsilon^{\top}\bar{A}(\eta^2I-[\epsilon]_\times^2)\bar{A}\epsilon\\
									   &=4\epsilon^{\top}\bar{A}(I-\epsilon\epsilon^{\top})\bar{A}\epsilon\\
									   &=4\epsilon^{\top}\bar{A}^2\epsilon(1-\|\epsilon\|^2\cos^2(\phi)),
\end{split}
\end{equation*}
where the facts that $\eta^2+\epsilon^{\top}\epsilon=1$ and $\epsilon^\top\bar A\epsilon=\|\epsilon\|\|\bar A\epsilon\|\cos(\phi)$ with $\phi=\angle(\epsilon,\bar A\epsilon)$, have been used. Finally, in view of \eqref{eq::Q::tr}, one has
\begin{align*}
\mathrm{tr}(\underline{A}(I-X))=4\epsilon^{\top}\bar{A}^2\epsilon,
\end{align*}
where $\bar A^2=\frac{1}{2}(\mathrm{tr}(\underline{A})I-\underline{A})$ or, equivalently, $\underline{A}=\mathrm{tr}(\bar{A}^2)I-2\bar{A}^2$ which proves \eqref{eq::psi_norm}. Furthermore, since for any positive definite matrix $\bar A$ one has
\begin{align*}
\frac{\lambda_{\min}^{\bar A}}{\lambda_{\max}^{\bar A}}\leq\cos(u,\bar Au)\leq 1,
\end{align*}
which implies \eqref{ineq::alpha}. On the other hand, one has
\begin{align*}
           v^\top[\lambda_{\min}^{\bar A}I-E(AX)]v =&v[\lambda_{\min}^{\bar A}-\frac{1}{2}(\mathrm{tr}(AX)I-X^\top A)]v\\
							      =& (\lambda_{\min}^{\bar A}-\frac{1}{2}\mathrm{tr}(AX))\|v\|^2+\frac{1}{2}v^\top X^\top Av\\
							   \leq& (\lambda_{\min}^{\bar A}-\frac{1}{2}\mathrm{tr}(AX))\|v\|^2+\frac{1}{2}\lambda_{\max}^A\|v\|^2\\
                                =&\frac{1}{2}\mathrm{tr}(A(I-X))\|v\|^2,
            \end{align*}
            where the fact that $\lambda_{\min}^{\bar A}=\frac{1}{2}(\mathrm{tr}(A)-\lambda_{\max}^A)$ has been used. Finally,  using the fact that $\mathrm{tr}(AX)\leq\mathrm{tr}(A)$ for all $X\in SO(3)$, it can be verified that
\begin{align*}
            \|E(AX)\|^2_F =&~\mathrm{tr}\left(E(AX)^\top E(AX)\right)\\
            		    =&~\frac{1}{4}\mathrm{tr}\left(A^2+\mathrm{tr}^2(AX)I-\mathrm{tr}(AX)(AX+X^\top A)\right)\\
            			=&~\frac{1}{4}(\mathrm{tr}(A^2)+\mathrm{tr}^2(AX))\\
            			\leq&~\frac{1}{4}(\mathrm{tr}(A^2)+\mathrm{tr}^2(A))\\
            			=&~\frac{1}{4}\mathrm{tr}\left((\mathrm{tr}(A)I-A)(\mathrm{tr}(A)I-A)\right)\\
            			=&\|\bar A\|_F^2.
\end{align*}

\subsection{Proof of Lemma \ref{lemma::cross_prod}}\label{proof::lemma::cross_prod}
Making use of  identity \eqref{id::3} one has
\begin{equation*}
\begin{split}
\sum_{i=1}^{n}\rho_{i}\|X^\top v_i-Y^{\top}v_i\|^2&=\sum_{i=1}^{n}\rho_{i}v_i^{\top}(I-YX^{\top})(I-XY^\top)v_i\\
				   											 &=\sum_{i=1}^{n}\rho_{i}\mathrm{tr}(v_iv_i^{\top}(I-XY^\top))\\&=\mathrm{tr}(A(I-XY^\top)).
\end{split}
\end{equation*}
        Furthermore, making use of \eqref{id::2} one has
        \begin{align}\label{P1}
        \mathbb{P}_a(AXY^\top)&=\frac{1}{2}\sum_{i=1}^{n}\rho_{i}\left(v_iv_i^{\top}XY^\top-YX^\top v_iv_i^{\top}\right)\nonumber\\
        &=\frac{1}{2}\sum_{i=1}^{n}\rho_{i}Y\left(Y^\top v_iv_i^{\top}X-X^{\top}v_iv_i^{\top}Y\right)Y^\top\nonumber\\\nonumber
        &=\frac{1}{2}\sum_{i=1}^{n}\rho_{i}Y\left[(X^\top v_i\times Y^{\top}v_i)\right]_\times Y^\top,\\
        &=\frac{1}{2}\sum_{i=1}^{n}\rho_{i}\left[Y(X^\top v_i\times Y^{\top}v_i)\right]_\times,
        \end{align}
        where we used the property that $Y[v]_\times Y^\top=[Yv]_\times$ for all $Y\in SO(3)$ and $v\in\mathbb{R}^3$. Taking the $[\cdot]_\otimes$ operator on both sides of (\ref{P1}) yields \eqref{PA_bi}.

\subsection{Proof of Lemma~\ref{lemma::condition1}}\label{proof:lemma:conidition1}

Let $(\eta,\epsilon)$ and $(\eta_q,\epsilon_q)$ be, respectively, the unit quaternion representation of $\tilde R$ and $\Gamma_A(\tilde R,q)$. Using \eqref{Gamma}-\eqref{Rq} and \eqref{Qmultiply}-\eqref{R1R2}, one can deduce that
\begin{equation}
\epsilon_q=k\eta U_A(\tilde R)\nu(q)+\sqrt{1-k^2U_A^2(\tilde R)}\epsilon+kU_A(\tilde R)[\epsilon]_{\times}\nu(q).
\end{equation}
Taking the norm square of $\epsilon_q$, equality \eqref{Gamma} yields
\begin{align*}
\|\epsilon_q\|^2=&k^2\eta^2U_A^2(\tilde R)+(1-k^2U_A^2(\tilde R))\|\epsilon\|^2+\\
&k^2U_A^2(\tilde R)\|\epsilon\|^2\sin^2(\varphi_q)+\\
&2k\eta\|\epsilon\|U_A(\tilde R)\sqrt{1-k^2U_A^2(\tilde R)}\cos(\varphi_q),
\end{align*}
where $\varphi_q$ is the angle between $\epsilon$ and $\nu(q)$. In view of \eqref{UA} and \eqref{eq::Q::tr}, one has $U_A(\tilde R)\leq\|\epsilon\|^2$. Also since $\|\epsilon\|\in[0,1]$, one has $|\eta|\cdot\|\epsilon\|=\|\epsilon\|\sqrt{1-\|\epsilon\|^2}\leq1/2$. It follows that
\begin{align}\label{bound_eq_upper}
\|\epsilon_q\|^2&\leq\|\epsilon\|^2[1+k+k^2/4].
\end{align}
Moreover, in view of \eqref{k_max} and the fact that $U_A(\tilde R)\leq 1$, one has $kU_A(\tilde R)<1/\sqrt{2}$ and therefore
\begin{align}\nonumber
\|\epsilon_q\|^2&\geq\|\epsilon\|^2[1-k^2U^2_A(\tilde R)-kU_A(\tilde R)\sqrt{1-k^2U^2_A(\tilde R)}]\\
&\geq\|\epsilon\|^2[1-k^2-k\sqrt{1-k^2}],\label{bound_eq_lower}
\end{align}
where the fact that the scalar function $1-x^2-x\sqrt{1-x^2}$ is decreasing on the interval $x\in[0,1/\sqrt{2}]$ has been used to obtain the last inequality. Now, in view of \eqref{UA},\eqref{eq::Q::tr},\eqref{bound_eq_upper}, \eqref{bound_eq_lower} and the fact that $|\tilde R|_I^2=\mathrm{tr}(I-\tilde R)/4=\|\epsilon\|^2$, the result of Lemma \ref{lemma::condition1} follows.

\subsection{Proof of Lemma \ref{lemma::synergism}}\label{proof::lemma::synergism}
        We prove the result of Lemma \ref{lemma::synergism} for each design case.
        \subsubsection*{Case of D1}
        Let $\Phi=\Phi_{U_A}$ with $A=I$ and $\mathcal{Q}=\{1,\cdots,6\}$. Suppose that $\Gamma_A(\tilde R,q)\in S_\pi$ for $(\tilde R,q)\in SO(3)\times\mathcal{Q}$. Define $Q=(\eta,\epsilon)$, $Q_{q}=(\eta_{q},\epsilon_{q})$ and $Q_{p}=(\eta_{p},\epsilon_{p})$ as the unit quaternion representation of the rotation matrices $\tilde R$, $\Gamma_A(\tilde R,q)$ and $\Gamma_A(\tilde R,p)$, respectively, for some $p\in\mathcal{Q}$.

        Making use of the quaternion product rule \eqref{Qmultiply} and \eqref{R1R2}, and the definition of the map $\Gamma_A$ in \eqref{Gamma}-\eqref{Rq}, one has
        \begin{align}\label{etaq}
        \eta_q&=\eta\sqrt{1-k^2U_A(\tilde R)}-kU_A(\tilde R)\epsilon^{\top}\nu(q),\\
        \eta_p&=\eta\sqrt{1-k^2U_A(\tilde R)}-kU_A(\tilde R)\epsilon^{\top}\nu(p).\label{etap}
        \end{align}
        On the other hand, since $\Gamma_A(\tilde R,q)\in\mathcal{S}_\pi$ one has, in view of \eqref{eq::Q::tr}, $U_A(\Gamma_A(\tilde R,q))=\|\epsilon_q\|^2=1$ and hence $\eta_q=0$. Consequently, it follows from \eqref{etap} that
        \begin{align*}
        \eta_p&=kU_A(\tilde R)\epsilon^{\top}\nu(q)-kU_A(\tilde R)\epsilon^{\top}\nu(p)\\
        		  &=kU_A(\tilde R)\epsilon^\top(\nu(q)-\nu(p)).
        \end{align*}
        Also, it can be verified, using \eqref{eq::Q::tr}, that $U_A(\tilde R)=\|\epsilon\|^2$, which yields
        \begin{equation}\label{eta_p}
        \eta_p=k\|\epsilon\|^3\left(\cos(\vartheta(q))-\cos(\vartheta(p))\right),
        \end{equation}
        where the fact that $\nu(q)^{\top}\epsilon=\|\epsilon\|\cos(\vartheta(q))$, such that $\vartheta(q)=\angle (\nu(q),\epsilon)$, has been used. On the other hand, in view of \eqref{etaq} and the fact that $\eta_q=0$ and $U_A(\tilde R)=\|\epsilon\|^2$, one obtains
        \begin{equation*}
        \sqrt{1-\|\epsilon\|^2}\sqrt{1-k^2\|\epsilon\|^4}=\|\epsilon\|^3|\cos(\vartheta(q))|,
        \end{equation*}
        where $|\eta|=\sqrt{1-\|\epsilon\|^2}$ has been used.
Squaring both sides of the above equation, it follows that
        $1-\|\epsilon\|^2=k^2\|\epsilon\|^4(1-\sin^2(\vartheta(q))\|\epsilon\|^2)$ which results in the quadratic inequality
$$
1-\|\epsilon\|^2-k^2\|\epsilon\|^2\leq 0,
$$
where $\|\epsilon\|^2\in [0,1]$. This inequality has a solution satisfying
        \begin{align}\label{e_lower_bound}
        \|\epsilon\|^2\geq\frac{-1+\sqrt{1+4k^2}}{2k^2}.
        \end{align}
        Besides, since $\nu(q+3)=-\nu(q+3)$, it follows that $\vartheta(q+3)=\pi+\vartheta(q)$ for all $q\in\mathcal{Q}$. Consequently, using relation
        $$
        \max_{y}\;\{|x+y|,|x-y|\}=|x|+\max_{y}|y|,
        $$
        one can show that
            {\small{\begin{align*}
            &\max_{p\in\mathcal{Q}}\big|\cos(\vartheta(q))-\cos(\vartheta(p))\big|\\
            &=\max_{p\in\{1,2,3\}}\Big\{\big|\cos(\vartheta(q))
                         -\cos(\vartheta(p))\big|,\big|\cos(\vartheta(q))+|\cos(\vartheta(p))\big|\Big\}\\
            &=\big|\cos(\vartheta(q))\big|+\max_{p\in\{1,2,3\}}\big|\cos(\vartheta(p))\big|\geq\frac{1}{\sqrt{3}}
            \end{align*}}}
\noindent where the fact that
        $$
            \sum_{p=1}^3\cos^2(\vartheta(p))=1 \; \leq 3\;\max_{p\in\mathcal{Q}}|\cos(\vartheta(p))|^2,
        $$
        has been used due to the orthogonality of $\{\nu(1),\nu(2),\nu(3)\}$. Consequently, in view of \eqref{eta_p} and \eqref{e_lower_bound} and the above result, one obtains
$$
\underset{p\in\mathcal{Q}}{\mathrm{max}}\;|\eta_p|^2\geq\frac{[-1+\sqrt{1+4k^2}]^{3}}{24k^4},
$$
Therefore, it can be shown that, for all $\Gamma_A(\tilde R,q)\in S_\pi$, the following holds
       \begin{align*}
       U_A(\Gamma_A(\tilde R,q))-\min_{p\in\mathcal{Q}}U_A(\Gamma_A(\tilde R,p))&=\|\epsilon_q\|^2-\min_{p\in\mathcal{Q}}\|\epsilon_p\|^2\\
      &=1-\min_{p\in\mathcal{Q}}\big(1-\eta_p^2\big)\\
      &=\underset{p\in\mathcal{Q}}{\mathrm{max}}\;\eta_p^2\geq\Delta_{I}(k)>\delta.
        \end{align*}
        As a result, one can conclude that if $\Gamma_A(\tilde R,q)\in S_\pi$ then $(\tilde R,q)\notin\mathcal{F}$. By contraposition, for all $(\tilde R,q)\in\mathcal{F}$, one has $\Gamma_A(\tilde R,q)\notin S_\pi$.

\subsubsection*{Case of D2}
       Let $\Phi=\Phi_{V_A}$ with $A=I$ and $\mathcal{Q}=\{1,\cdots,6\}$. Since $A=I$, the set of all eigenvectors of $A$ is identified by $\mathcal{E}(A)=\mathbb{S}^2$. Moreover, following similar steps as in the proof of D1, and for all $\Gamma_A(\tilde R,q)\in S_\pi$, one has
        \begin{align*}
        &V_A(\Gamma_A(\tilde R,q))-\min_{p\in\mathcal{Q}}V_A(\Gamma_A(\tilde R,p))\\
        &=2-\min_{p\in\mathcal{Q}}2[1-\sqrt{1-\|\epsilon_p\|^2}]\\
        &=2\max_{p\in\mathcal{Q}}|\eta|_p\geq 2\sqrt{\Delta_{I}(k)}=\Delta_{II}(k)>\delta.
        \end{align*}
It follows that if $\Gamma_A(\tilde R,q)\in S_\pi$ then $(\tilde R,q)\notin\mathcal{F}$. It follows, by contraposition, that for all $(\tilde R,q)\in\mathcal{F}$, one has $\Gamma_A(\tilde R,q)\notin S_\pi$.
%
     \subsubsection*{Case of D3}
In \cite{berkane2015construction}, we have shown that the function $\Phi_{U_A}$ with the parameters selected as in D3 satisfies:
 \begin{equation}\label{pf::case3}
       U_A(\Gamma_A(\tilde R,q))-\min_{p\in\mathcal{Q}}U_A(\Gamma_A(\tilde R,p))\geq\Delta_{III}(A,k),
        \end{equation}
        for all $(\tilde R,q)$ satisfying $\Gamma_A(\tilde R,q)=\mathcal{R}_Q(0,v)$ where $v$ is an eigenvector of $A$. Since $0<\delta<\Delta_{III}(A,k)$, it is clear that the set where $\Gamma_A(\tilde R,q)=\mathcal{R}_Q(0,v)$ lies entirely in the jump set $\mathcal{J}$. Hence, the attitude $\Gamma_A(\tilde R,q)$ can not be equal to $\mathcal{R}_Q(0,v)$ for any $v\in\mathbb{S}^2$ (eigenvector of $A$) during the flows of $\mathcal{F}$.
     \subsubsection*{Case of D4}
           Let $(\tilde R,q)$ satisfying $\Gamma_A(\tilde R,q)=\mathcal{R}_Q(0,v)$ where $v$ is an eigenvector of $A$, or equivalently $\Gamma_A(\tilde R,q)\in\mathcal{S}_\pi$. In view of \eqref{pf::case3} and using the fact that $\xi\leq U_A(\mathcal{R}_Q(0,v))\leq 1$, one can conclude that
         \begin{align*}
         &U_A(\Gamma_A(\tilde R,q))-\min_{p\in\mathcal{Q}}U_A(\Gamma_A(\tilde R,p))\\
         &\geq\Delta_{III}(A,k)\\
         &=\Delta_{IV}(A,k)^2/4+\Delta_{IV}(A,k)\sqrt{1-\xi}
         \\
         &\geq\Delta_{IV}(A,k)^2/4+\Delta_{IV}(A,k)\sqrt{1-U_A(\Gamma_A(\tilde R,q))},
\end{align*}
 Hence, by completing the squares, one obtains
\begin{align*}
                     &\max_{p\in\mathcal{Q}}\sqrt{1-U_A(\Gamma_A(\tilde R,p))}-\sqrt{1-U_A(\Gamma_A(\tilde R,q))}\\
                     &\geq\Delta_{IV}(A,k)/2,
 \end{align*}
  or equivalently
\begin{equation*}
       V_A(\Gamma_A(\tilde R,q))-\min_{p\in\mathcal{Q}}V_A(\Gamma_A(\tilde R,p))\geq\Delta_{IV}(A,k).
\end{equation*}
Hence, if $\delta<\Delta_{IV}(A,k)$ then it is obvious that the set where $\Gamma_A(\tilde R,q)=\mathcal{R}_Q(0,v)$ lies entirely in the jump set $\mathcal{J}$.\\

Now, let us show that $\mathcal{F}\subseteq\mathcal{D}$ for all the cases. For D1 and D3, the potential function $\Phi=\Phi_{U_A}$ is differentiable on all $SO(3)\times\mathcal{Q}$ due to the fact that $U_A$ is smooth on $SO(3)$ and the transformation $\Gamma_A$ is differentiable everywhere as shown in Lemma \ref{lemma::condition1}. Thus, $\mathcal{F}\subseteq\mathcal{D}=SO(3)\times\mathcal{Q}$ holds. The potential function $\Phi_{V_A}$, however, is differentiable on the set
\begin{align*}
\mathcal{D}=\{(\tilde R,q)\in SO(3)\times\mathcal{Q}\mid\Phi_{U_A}(\tilde R,q)\neq1\}.
\end{align*}
 Let $(\tilde R,q)\in SO(3)\times\mathcal{Q}$ such that $\Gamma_A(\tilde R,q)\in S_\pi$. Hence, one has
$$
\Phi_{U_A}(\tilde R,q)=U_A(\Gamma(\tilde R,q))=U_A(\mathcal{R}_Q(0,v))=1,
$$
for some $v\in\mathbb{S}^2$. Consequently, in this case $(\tilde R,q)\notin\mathcal{D}$. Therefore, since $\Gamma_A(\tilde R,q)\notin\mathcal{S}_\pi$ is guaranteed during the flows of $\mathcal{F}$ one has $\mathcal{F}\subseteq\mathcal{D}$.

\bibliographystyle{IEEETran}
\bibliography{IEEEabrv,Hybrid}
\end{document}